\documentclass[12pt,reqno]{amsart}

\usepackage[english]{babel}
\usepackage[applemac]{inputenc}
\usepackage[T1]{fontenc}

\usepackage{bbm}
\usepackage[nocompress]{cite}
\usepackage{float, verbatim}
\usepackage{amsmath}
\usepackage{amssymb}
\usepackage{amsthm}
\usepackage{amsfonts}
\usepackage{graphicx}
\usepackage{mathtools}
\usepackage{enumitem}
\usepackage[pagebackref]{hyperref}
\usepackage{color}
\usepackage{tikz}
\usepackage{xcolor}
\usepackage{pgfplots}
\usepackage{caption}
\usepackage{subcaption}
\usepackage{enumitem}
\usepackage{url}
\usepackage{dsfont}
\usepackage{enumitem}

\hypersetup{
	colorlinks=true,
	linkcolor=blue,
	filecolor=magenta,      
	urlcolor=cyan,
	citecolor= red,
}

\usepackage[colorinlistoftodos]{todonotes}

\pgfplotsset{compat=1.18}

\newcommand{\boxd}{\dim_{\mathrm{B}}}

\newcommand{\Haus}{\dim_{\mathrm{H}}}

\newtheorem*{thm*}{Theorem}
\newtheorem*{conj*}{Conjecture}
\newtheorem*{ques*}{Question}
\newtheorem*{rem*}{Remark}
\newtheorem*{defn*}{Definition}
\newtheorem*{mainques*}{Main questions}

\newtheorem{thm}{Theorem}[section]

\newtheorem{lma}[thm]{Lemma}

\newtheorem{defn}[thm]{Definition}

\newtheorem{prop}[thm]{Proposition}
\newtheorem{conj}[thm]{Conjecture}
\newtheorem{claim}[thm]{Claim}
\newtheorem{rem}[thm]{Remark}
\newtheorem{ques}[thm]{Question}

\newtheorem{exm}[thm]{Example}

\def\RR{\mathbb{R}}
\def\CC{\mathbb{C}}

\def\ZZ{\mathbb{Z}}
\def\NN{\mathbb{N}}

\def\supp{\mathrm{supp}}

\def \bxi {{\boldsymbol{\xi}}}
\def \bzero {{\mathbf{0}}}

\def \cM {{\mathcal{M}}}

\def \epsilon {{\varepsilon}}

\def \bt {{\mathbf{t}}}

\def \bzero {{\boldsymbol{0}}}

\def \bS {\mathbb S}

\def \ba {\mathbf a}
\def \bb {\mathbf b}
\def \bc {\mathbf c}

\def \bt {\mathbf t}

\def \bv {\mathbf v}
\def \bx {\mathbf x}

\def \by {\mathbf y}

\def \bzero {\mathbf 0}

\def \bxi {{\boldsymbol{\xi}}}

\def \cA {\mathcal A}

\def \cD {\mathcal D}

\def \cF {\mathcal F}

\def \cM {\mathcal M}

\def \cP {\mathcal P}

\def \leq {\leqslant}

\def \geq {\geqslant}

\def \dim {\mathrm{dim}}

\def \det {\mathrm{det}}

\def \supp {{\mathrm{supp}}}

\def \ds1 {\mathds{1}}

\def \epsilon {{\varepsilon}}

\numberwithin{equation}{section}

\title[Fourier decay of nonlinear images]{Fourier transform of nonlinear images of self-similar measures: qualitative aspects}

\author{Amlan Banaji}
\address{Amlan Banaji, Department of Mathematics and Statistics, University of Jyv\"askyl\"a, P.O. Box 35 (MaD), FI-40014 University of Jyv\"askyl\"a, Finland}
\curraddr{}
\email{banajimath@gmail.com}

\author{Han Yu}
\address{Han Yu, College of Mathematics and Statistics, Center of Mathematics, Chongqing University, Chongqing, 401331, China.}
\curraddr{}
\email{han.yu.2@cqu.edu.cn}

\thanks{}

\subjclass[2020]{42A38 (primary), 28A80 (secondary)}
\keywords{Fourier decay, self-similar measures, self-conformal measures, nonlinear images, \L{}ojasiewicz inequality}

\begin{document}


\begin{abstract}
    The goal of this paper is to establish polynomial Fourier decay for images of self-similar measures $\mu$ on $\mathbb{R}^k$ under sufficiently nonlinear real-analytic maps $f \colon \mathbb{R}^k \to \mathbb{R}^d$. For example, we prove that if $f$ is analytic on $\mathbb{R}^k$, its graph does not lie in an affine hyperplane in $\mathbb{R}^{k+d}$, and $\mu$ is not supported in an affine hyperplane in $\RR^k$, then the image measure has polynomial Fourier decay. Key steps in the proof include establishing a uniform {\L}ojasiewicz-type inequality for self-similar measures, and using the decay of the Fourier transform of $\mu$ outside a very small exceptional set of frequencies. As an application of our results, we prove polynomial Fourier decay for self-conformal measures on $\mathbb{C}$ for a large class of complex analytic IFSs which are not self-similar but are conjugate to a linear IFS via an analytic map. 
\end{abstract}

\maketitle

\tableofcontents

\section{Introduction}

\subsection{Fourier decay of nonlinear images}
In this paper, we consider polynomial Fourier decay for non-linear images of self-similar measures. 
The problem is given as follows: let $k,d$ be positive integers and let $f \colon \mathbb{R}^k\to\mathbb{R}^d$ be a real analytic map. Let $\mu$ be a self-similar measure on $\RR^k$, and consider the pushforward measure $\mu_f$ defined by $\mu_f(B) = \mu(f^{-1}(B))$ for Borel sets $B \subseteq \RR^d$. 
The Fourier transform of $\mu_f$ is 
\[ \widehat{\mu_f} \colon \RR^d \to \CC, \qquad \widehat{\mu_f}(\bxi) \coloneqq \int_{\RR^k} e^{-2 \pi i \langle \bxi,f(\bx) \rangle} d\mu(\bx). \] 
We ask the following question about $\widehat{\mu_f}$, whose answer (as we will see) depends on properties of $\mu$ and $f$. 
\begin{ques}\label{ques:main}
    Does $\mu_f$ have polynomial Fourier decay? In other words, are there some $\sigma, c>0$ such that 
    \[|\widehat{\mu_f}(\bxi)|\leq c|\bxi|^{-\sigma} \qquad \mbox{ for all } \bxi\in\mathbb{R}^d\setminus \{\mathbf{0}\}? \] 
\end{ques}
\begin{rem}
    If $\mu$ is the Lebesgue measure on a bounded open set in $\mathbb{R}^k$ (rather than a fractal measure), then the polynomial Fourier decay property for $\mu_f$ holds if $f$ is non-degenerate (our definition of non-degeneracy will be made precise in Section~\ref{s:nondeg}). 
    This is a classical result in harmonic analysis, see \cite[VIII, Section~3.2]{Ste1993}. 
\end{rem}
Question~\ref{ques:main} is motivated by work of Kaufman~\cite{K82}, who observed that the image measure can have polynomial Fourier decay even if the self-similar measure is a non-Rajchman measure (meaning that $\limsup_{|\bxi| \to \infty} |\widehat{\mu}(\bxi)| > 0$) like the Cantor--Lebesgue measure. 
We defer further historical background to a later section. 
Our answer to Question~\ref{ques:main} is given by the following theorem, which is a main result of the paper. 
To avoid trivialities, we make the standing assumption throughout the paper that all our self-similar measures are never just a single atom. 

\begin{thm}\label{thm: main}
    Let $\mu$ be a self-similar measure on $\mathbb{R}^k$ for some integer $k\geq 1$. Let $f\colon \mathbb{R}^k\to\mathbb{R}^d$ be a real analytic map where $d\geq 1$ is an integer, and assume $f(\RR^k)$ is not contained in an affine hyperplane of $\RR^d$. 
    Then $\mu_f$ has polynomial Fourier decay unless all of the following conditions hold: 
    \begin{enumerate}
        \item\label{i:degenerate} $\mu$ is supported in an affine subspace $H\subseteq \mathbb{R}^{k}$ (possibly all of $\RR^k$)\footnote{We say $H \subseteq \RR^k$ is an \emph{affine subspace} of $\RR^k$ if there exist $j \in \{0,\dotsc,k\}$, orthonormal vectors $\mathbf{e}_1,\dotsc,\mathbf{e}_j \in \RR^k$, and $\mathbf{t} \in \RR^k$ such that $H = \{ \bx \in \RR^k : \langle \bx - \mathbf{t},\mathbf{e}_i \rangle = 0 \, \forall i \in \{1,\dotsc,j \} \}$. In the $j=1$ case we call $H$ an \emph{affine hyperplane}.}, and the restriction $f|_{H}$ is partially linear in the sense that there are real numbers, $b_1, \dotsc, b_d$, not all zero, such that for $f=(f_1,\dotsc,f_d)$, the function $\sum_i b_i f_i$ restricts to an affine map $H\to \RR$. 
        
        \item\label{i:nopoly} Given $H$ as above, if $\nu$ is a self-similar measure on $\RR^{\dim H}$ and $\iota \colon \RR^{\dim H} \hookrightarrow H$ is an isometric embedding, then $\nu$ does \emph{not} have polynomial Fourier decay.\footnote{Whether $\nu$ has polynomial Fourier decay is clearly independent of the choice of $\nu$ and $\iota$.}
    \end{enumerate}
\end{thm}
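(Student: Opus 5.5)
The plan is to reduce to the case where $\mu$ spans its ambient space, and then split according to whether $f$ is nondegenerate on that space. Let $H$ be the affine span of $\supp \mu$. Since $\supp\mu$ is the attractor $K$ of the underlying IFS and each similarity maps $K$ into $K$, the affine span $H$ is invariant under every map of the IFS. Choosing an isometric embedding $\iota\colon\RR^{\dim H}\hookrightarrow H$, the conjugated IFS gives a self-similar measure $\nu$ on $\RR^{\dim H}$ with $\iota_*\nu=\mu$. Moreover $\nu$ is not supported in any affine hyperplane of $\RR^{\dim H}$, and $\dim H\geq 1$ by the standing non-atomic assumption. Then
\[
\mu_f=(f\circ\iota)_*\nu ,
\]
and $f\circ\iota$ is real analytic on $\RR^{\dim H}$. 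So it suffices to treat $g=f\circ\iota$ and a measure $\nu$ whose support spans $\RR^{\dim H}$.

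\emph{Case A: no nontrivial combination $\sum_i b_i g_i$ is affine.} This is the situation in which condition~\eqref{i:degenerate} fails for the minimal $H$. I would then prove decay directly; this is the heart of the paper and the main obstacle. Write
\[
\widehat{\mu_f}(\bxi)=\int e^{-2\pi i\langle \bxi, g(\bx)\rangle}\,d\nu(\bx),\qquad \bxi=|\bxi|\,\mathbf{e}.
\]
\begin{enumerate}
\item Use self-similarity to decompose $\nu$ into cylinder pieces $\nu\circ S_\omega^{-1}$ of diameter about $r=|\bxi|^{-\alpha}$, for a small $\alpha>0$ to be chosen.
\item On each piece, Taylor-expand the phase $\langle \mathbf{e}, g\rangle$ to first order. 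The linear term has frequency $|\bxi|\, r\,\nabla\langle\mathbf{e},g\rangle(\bx_\omega)$, where $\bx_\omega$ is a point of the piece.
\item The error from the quadratic term is $O(|\bxi| r^2)$, which is small once $\alpha>1/2$.
\item Each piece is a rescaled copy of $\nu$, so its contribution is bounded by $|\widehat{\nu}|$ at that frequency.
\end{enumerate}
To finish, I need two inputs:
\begin{itemize}
\item a bound on the exceptional frequency set: $|\widehat\nu(\boldsymbol\eta)|\leq|\boldsymbol\eta|^{-\sigma}$ except for $\boldsymbol\eta$ in a set that is very sparse, with polynomially small $\nu$-mass of corresponding directions at each scale;
\item a uniform \L{}ojasiewicz inequality for self-similar measures: for unit $\mathbf{e}$, the $\nu$-measure of the set where the gradient $\nabla\langle\mathbf{e},g\rangle$ is within $\delta$ of a prescribed bad vector is at most $C\delta^{\kappa}$, with $C,\kappa$ uniform in $\mathbf{e}$.
\end{itemize}
The \L{}ojasiewicz inequality is where analyticity, compactness of the sphere of directions, and nondegeneracy of $g$ are used. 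Nondegeneracy enters as follows: a nonconstant analytic function cannot vanish on the spanning support of $\nu$ to high order, and self-similar measures are polynomially non-concentrated near proper analytic subvarieties. Combining the two inputs with the pigeonhole over pieces gives $|\widehat{\mu_f}(\bxi)|\lesssim|\bxi|^{-\sigma'}$.

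\emph{Case B: some $\sum_i b_i g_i$ is affine.} Then condition~\eqref{i:degenerate} holds. If condition~\eqref{i:nopoly} also holds there is nothing to prove, so assume $\nu$ has polynomial Fourier decay. I would split $\RR^d$ into the subspace $V$ of coefficient vectors $\mathbf{b}$ for which $\langle\mathbf{b},g\rangle$ is affine, and a complement $W$.
\begin{itemize}
\item For $\bxi$ near $V$, the phase $\langle\bxi,g\rangle$ is essentially an affine function. Its linear part should be nonzero and of size about $|\bxi|$, so the decay of $\widehat\nu$ applies; the component of $\bxi$ in $W$ is handled as a perturbation, using the same cylinder decomposition as in Case A.
\item For $\bxi$ in a cone away from $V$, rerun the Case A argument. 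Here the \L{}ojasiewicz inequality is applied only to the nondegenerate directions, but now with full decay of $\widehat\nu$ available, so no exceptional set is needed.
\end{itemize}

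The delicate point in Case B is ensuring that the linear parts of the affine combinations $\langle\mathbf{b},g\rangle$, $\mathbf{b}\in V$, are nondegenerate, i.e.\ that no such combination is constant. A constant combination would give an atom-like direction with no decay; this is exactly where the hypothesis that $f(\RR^k)$ is not contained in an affine hyperplane must be used. I expect to have to check carefully that this hypothesis suffices here, since it concerns $f$ on all of $\RR^k$ while we work only on $H$. A constant combination on $H$ seems possible, for instance $f(x,y)=(y,y^2)$ with $\mu$ supported on the $x$-axis. If the hypothesis does not suffice, the statement would need to be read with the non-hyperplane condition imposed on $f(H)$, or Case B would need a separate treatment of constant combinations.
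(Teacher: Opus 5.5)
Your reduction to an affinely irreducible $\nu$ on $H$ and your split by partial linearity match the paper's proof. However, Case A has a genuine gap: rotations. In step (4) you treat each cylinder $\nu\circ S_\omega^{-1}$ as a rescaled copy of $\nu$. In fact it is a rescaled \emph{and rotated} copy, so it contributes $|\widehat{\nu}(r_\omega O_\omega\boldsymbol\eta_\omega)|$, where $O_\omega$ is the orthogonal part of $S_\omega$. Decay outside sparse frequencies only says that the bad frequencies are covered by $\ll R^{\varepsilon}$ unit cubes, so to count the pieces landing there you must pull these cubes back through the $O_\omega$.

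If the orthogonal parts are non-expanding (automatic when $\dim H\leq 2$), you can sort the pieces into $\ll 2^{\varepsilon n}$ classes sharing a rotation. Then your \L{}ojasiewicz estimate near a prescribed vector (the family $\nabla g_{\mathbf e}-\by$) suffices; this is Theorem~\ref{thm: maingeneraldomain}~\eqref{i:nonexpanding}. For $\dim H\geq 3$, however, the orthogonal parts may generate a free subgroup of $SO(3)$, giving exponentially many rotations. The only usable information is then that $\nabla g_{\mathbf e}(\bx_\omega)$ lies in a thin annulus (the union of all rotations of the pulled-back cube). You therefore need the \L{}ojasiewicz inequality for the family $|\nabla g_{\mathbf e}|^2-r^2$, uniformly in $(\mathbf e,r)$, and this requires $g$ to be non-conical, not merely non-degenerate.

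The missing idea is that non-degeneracy implies non-conicality because $g$ is analytic on all of $\RR^{\dim H}$. If $|\nabla g_{\mathbf e}|$ is constant on a ball, it is constant everywhere, and a global solution of the eikonal equation is affine (Proposition~\ref{prop: globaldegcon}). This is how the paper obtains Case A, via Theorem~\ref{thm: maingeneraldomain}~\eqref{i:expanding}. It is exactly where analyticity on the whole space, rather than near $\supp(\nu)$, is needed; for maps defined only near the support, the expanding non-degenerate case is open (Remark~\ref{rem:sharp}).

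In Case B your worry is justified, and it is a defect of the statement itself. The paper's proof also takes $H$ minimal and then invokes Theorem~\ref{thm: maingeneraldomain}~\eqref{i:selfsimhaspoly} for $f|_H$. That requires $f|_H$ to be non-trapped, which does not follow from $f(\RR^k)$ lying in no affine hyperplane. Your example $f(x,y)=(y,y^2)$ breaks that argument. It does not break the literal statement if $H$ is read existentially, since $H=\RR^2$ then satisfies both conditions ($f_1=y$ is linear and $\mu$ lives on a line).

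A counterexample under every reading is $f(x,y)=(y^2,y^3)$ with $\mu$ Lebesgue measure on $[0,1]\times\{0\}$, which is self-similar for $(x,y)\mapsto(x/2,y/2)$ and $(x,y)\mapsto(x/2+1/2,y/2)$. The cusp $f(\RR^2)$ lies in no line, and $f$ is not partially linear on $\RR^2$. Also, $\mu$ has polynomial decay as a measure on the $x$-axis. So no $H$ satisfies both conditions, yet $\mu_f=\delta_{\mathbf 0}$.

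The correct hypothesis is that $f(H)$, for $H$ the affine span of $\supp(\mu)$, is not contained in an affine hyperplane. Equivalently, by Lemma~\ref{lma: prod of SS}, $\mu_f$ is not supported in one, which is clearly necessary. With this hypothesis your Case B works, but the split into $V$ and a cone is unnecessary. Since $\widehat{\nu}$ decays in every direction, one uniform \L{}ojasiewicz estimate handles all directions at once, as in the paper's proof of Theorem~\ref{thm: maingeneraldomain}~\eqref{i:selfsimhaspoly}. The estimate is for the compact family $\{\nabla g_{\bv}\}_{\bv\in\mathbb{S}^{d-1}}$, which avoids the zero function precisely because $g$ is non-trapped.
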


We will see in Section~\ref{s:nondeg} that the partial linearity conclusion from Theorem~\ref{thm: main}~\eqref{i:degenerate} is equivalent to the graph of $f|_H$ not being contained in an affine subspace of $H \times \RR^{d}$ which is proper in the sense that its dimension is strictly less than $d + \dim H$. 

The proof of Theorem~\ref{thm: main} contains substantial additional difficulties compared to previous work in the $d=1$ or $k=1$ cases. In particular, we need to establish a uniform \L{}ojasiewicz type inequality for self-similar measures, see Section~\ref{ss:lojafirst}. Moreover, we need a recent result of Khalil~\cite{Khalil} which can be used to show that self-similar measures on $\RR^k$ which are not supported in an affine hyperplane in $\RR^k$ have polynomial Fourier decay outside a very sparse set of exceptional frequencies, see Section~\ref{ss:afdfirst}. 

Theorem~\ref{thm: main} is sharp in the sense indicated by the following simple observation. 
\begin{prop}\label{prop: converse}
    Let $f \colon \RR^k \to \RR^d$ be an analytic map which is partially linear on $\RR^k$. 
    Then there is some self-similar measure $\mu$ on $\RR^k$ which is not supported in any affine hyperplane in $\RR^k$, and such that neither $\mu$ nor $\mu_f$ is Rajchman. 
\end{prop}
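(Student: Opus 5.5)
We will construct $\mu$ as a product of one-dimensional Cantor--Lebesgue type measures. This makes both $\widehat{\mu}$ and $\widehat{\mu_f}$ easy to compute along the relevant directions.

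\emph{Step 1: reduce to a linear functional.} By partial linearity on $H=\RR^k$, there are reals $b_1,\dotsc,b_d$, not all zero, such that $g \coloneqq \sum_i b_i f_i$ is affine on $\RR^k$. Write $g(\bx) = \langle \mathbf{a}, \bx\rangle + c$ with $\mathbf{a}\in\RR^k$ and $c\in\RR$. Put $\mathbf{b}=(b_1,\dotsc,b_d)\neq \bzero$ and take frequencies $\bxi = t\mathbf{b}$ with $t\in\RR$. Then
\[ \widehat{\mu_f}(t\mathbf{b}) = \int e^{-2\pi i t g(\bx)}\,d\mu(\bx) = e^{-2\pi i t c}\,\widehat{\mu}(t\mathbf{a}). \]
Hence $|\widehat{\mu_f}(t\mathbf{b})| = |\widehat{\mu}(t\mathbf{a})|$, and since $|t\mathbf{b}|\to\infty$ as $t\to\infty$, it suffices to find $\mu$ such that $\limsup_{t\to\infty}|\widehat{\mu}(t\mathbf{a})|>0$. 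If $\mathbf{a}=\bzero$, then $|\widehat{\mu_f}(t\mathbf{b})|=1$ for all $t$, so $\mu_f$ is not Rajchman for any $\mu$. In that case we only need some non-Rajchman self-similar $\mu$ not supported in a hyperplane, and the construction below still supplies one. So assume $\mathbf{a}\ne\bzero$.

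\emph{Step 2: construct $\mu$.} Choose an orthogonal matrix $O$ with $O^{T}\mathbf{a} = |\mathbf{a}|\mathbf{e}_1$, i.e.\ $O$ maps $\mathbf{e}_1$ to $\mathbf{a}/|\mathbf{a}|$. Let $\lambda$ be the middle-thirds Cantor measure on $[0,1]$, with $\widehat{\lambda}(s) = e^{-\pi i s}\prod_{n\ge1}\cos(2\pi s\, 3^{-n}/ \cdot)$ in a suitable normalisation; it is classical that $|\widehat{\lambda}(3^m)|$ equals a fixed nonzero constant for all $m\geq 0$. Let $\lambda^{\otimes k}$ be the product measure. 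It is self-similar for the IFS $\{\bx\mapsto \bx/3 + \frac{2}{3}\boldsymbol{\epsilon} : \boldsymbol{\epsilon}\in\{0,1\}^k\}$ with equal weights, and it is not supported in any hyperplane because its support $C^k$ has nonempty interior relative to no hyperplane (it contains $2^k$ affinely spanning corner points). Define $\mu = (\lambda^{\otimes k})\circ O^{-1}$, the pushforward under $O$. This is again self-similar: conjugating the maps by $O$ gives similarities with contraction $1/3$. It is also not supported in a hyperplane, since $O$ is invertible and linear.

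\emph{Step 3: compute.} We have $\widehat{\mu}(t\mathbf{a}) = \widehat{\lambda^{\otimes k}}(tO^{T}\mathbf{a}) = \widehat{\lambda}(t|\mathbf{a}|)\,\widehat{\lambda}(0)^{k-1} = \widehat{\lambda}(t|\mathbf{a}|)$. Taking $t_m = 3^m/|\mathbf{a}|$ gives $|\widehat{\mu}(t_m\mathbf{a})| = |\widehat{\lambda}(3^m)|\ge c>0$. This shows $\mu$ is non-Rajchman, and by Step 1 $\mu_f$ is non-Rajchman along $t_m\mathbf{b}$.

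The only point needing care is the classical fact $\inf_m|\widehat{\lambda}(3^m)|>0$. From $\widehat{\lambda}(s)=\prod_{n\geq1}\frac{1+e^{-4\pi i s 3^{-n}}}{2}$, the factors with $n\le m$ have modulus $1$ at $s=3^m$. The tail is $\widehat{\lambda}(1)$, which is nonzero since each factor $|\cos(2\pi 3^{-j})|$ is nonzero and the product converges. Everything else is routine bookkeeping.
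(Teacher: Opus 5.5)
Your proposal is correct and follows essentially the same route as the paper: you use the affine combination $\sum_i b_i f_i = \langle \mathbf{a},\cdot\rangle + c$ to get $|\widehat{\mu_f}(t\mathbf{b})| = |\widehat{\mu}(t\mathbf{a})|$, and then take a rotated $k$-fold product of the Cantor--Lebesgue measure whose non-decaying axis is aligned with $\mathbf{a}$. You also supply details the paper leaves implicit (the explicit rotation, the phase factor $e^{-2\pi i t c}$, the case $\mathbf{a}=\bzero$, and the computation at the frequencies $3^m$); the only small imprecision is that $\widehat{\lambda}(1)\neq 0$ needs $\sum_j (1-|\cos(2\pi 3^{-j})|)<\infty$, not merely convergence of the product, and this summability is immediate.
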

\begin{proof}
There exist $b_1,\dotsc,b_d$, not all zero, such that $\sum_i b_i f_i$ is a linear function. 
Next, observe that
\[
\widehat{\mu_f} (\bxi) = \int e^{-2\pi i \sum_{i}\xi_i f_i(x)}d\mu(x).
\]
Then if $\bxi$ is chosen to be along the line with direction $(b_1,\dotsc,b_d)$ we see that $\sum_i \xi_i f_i(x)=t_\bxi L(x)$ for some linear form $L$ and $t_\bxi$ is a real number whose norm is proportional to the length of $\bxi$. We write this linear form as
\[
L(x)=\sum_{i=1}^{k} a_i x_i+b
\]
for real numbers $a_1,\dotsc,a_k,b$, and assume that this linear form is non-trivial (or else the non-decay property is trivial). 
Observe that for any self-similar measure $\mu$ on $\RR^k$, 
\begin{equation}\label{e:degnodecay}
\int e^{-2\pi i \sum_{i}\xi_i f_i(x)}d\mu(x)=\left|\int e^{-2\pi i t_\bxi\sum_i a_i x_i}d\mu(x)|\right|=|\widehat{\mu}(t_\bxi (a_1,\dotsc,a_k))|.
\end{equation}
Let $\mu$ be the $k$-fold product of the Cantor--Lebesgue measure, rotated so that $\widehat{\mu}$ does decay along the direction $(a_1,\dotsc,a_k)$. 
Then from~\eqref{e:degnodecay} we see that $\widehat{\mu_f}$ does not decay along $(b_1,\dotsc,b_d)$. 
\end{proof}
Proposition~\ref{prop: converse} gives a converse to Theorem~\ref{thm: main}, because if we let $H \subseteq \RR^k$ be the smallest affine subspace for which~\eqref{i:degenerate} and~\eqref{i:nopoly} hold, then Proposition~\ref{prop: converse} tells us that $\mu_f = \nu_{f_H \circ \iota}$ does not have polynomial Fourier decay for some choice of $\mu$, $\nu$. 

By Theorem~\ref{thm: main} and Proposition~\ref{prop: converse}, the problem of characterising polynomial Fourier decay for images of self-similar measures on $\RR^k$ by maps $\RR^k \to \RR^d$ which are analytic on $\RR^k$ has been reduced to the problem of characterising those self-similar measures on $\RR, \RR^2, \dotsc, \RR^k$ which have polynomial Fourier decay. 
However, this latter problem is challenging and remains wide open even for Bernoulli convolutions, which are a special class of self-similar measures on $\RR$. 

One of the applications of our results relates to Fourier decay for self-conformal measures in the plane. 
One headline result is Theorem~\ref{thm: analyticconformal}. 
In Section~\ref{s:conformal}, we will contextualise the results and describe precisely what we mean by all the terms in the following statement. 

\begin{thm}\label{thm: analyticconformal}
    Let $\nu$ be a self-conformal measure for an IFS of holomorphic contractions on a ball in $\CC$. Assume that $\nu$ is not self-similar and not supported inside an analytic curve.
    Then $\nu$ has polynomial Fourier decay. 
\end{thm}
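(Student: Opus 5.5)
The plan is to reduce Theorem~\ref{thm: analyticconformal} to Theorem~\ref{thm: main} by linearisation. Let the IFS be $\{g_j\}_{j=1}^m$, holomorphic contractions of a ball $B\subseteq\CC$, and let $\nu=\sum_j p_j (g_j)_*\nu$. The key input is the classical theory of holomorphic IFSs, via Poincar\'e/Koenigs linearisation and the rigidity results of Sullivan-type and Kleinian-group arguments as used by Algom, Rodriguez Hertz and Wang in the real-line case. It gives a dichotomy. Either the IFS is conjugate by a biholomorphic map to a self-similar IFS, in which case the measure is self-similar after a change of coordinates. Or one can construct an analytic map $\Phi$, defined near the attractor, intertwining the IFS with a genuinely linear IFS on some $\CC^N$. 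This second case is the one the abstract alludes to: the IFS is ``conjugate to a linear IFS via an analytic map''.

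Concretely, I would first treat the case the paper emphasises: there is an injective holomorphic map $\phi$ with $g_j=\phi\circ S_j\circ\phi^{-1}$, where the $S_j$ are complex affine (hence real similarities of $\RR^2$). Then $\nu=\phi_*\mu$, where $\mu$ is self-similar on $\RR^2\cong\CC$. Identify $\phi$ with a real-analytic map $f\colon\RR^2\to\RR^2$; after localising with a smooth cutoff and using a finite iterate to shrink supports, I may assume $f$ is analytic on a neighbourhood of $\supp\mu$. I then check the hypotheses of Theorem~\ref{thm: main} with $k=d=2$:
\begin{itemize}
\item Since $f$ is a local diffeomorphism, $f(\RR^2)$ is not contained in a line.
\item If $\mu$ were supported in a line $H$, then $\nu$ would be supported in the analytic curve $\phi(H)$, which is excluded. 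So $H=\RR^2$.
\item Condition~\eqref{i:degenerate} with $H=\RR^2$ would force $b_1\mathrm{Re}\,\phi+b_2\mathrm{Im}\,\phi=\mathrm{Re}(\bar{\beta}\phi)$ to be affine for $\beta=b_1+ib_2\neq0$. A holomorphic function with affine real part is affine, so $\phi$ would be complex affine. The $g_j$ would then be similarities, making $\nu$ self-similar, which is excluded.
\end{itemize}
Hence condition~\eqref{i:degenerate} fails, and Theorem~\ref{thm: main} yields polynomial decay.

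For the general case, I would write $\nu$ as the image of a self-similar measure $\mu$ on $\CC^N\cong\RR^{2N}$ under an analytic map $f\colon\RR^{2N}\to\RR^2$. Here $\mu$ is supported on the smallest affine subspace $H$ containing its support. The same argument applies: partial linearity of $f|_H$ means some $\mathrm{Re}(\bar\beta f)$ is affine on $H$, and I want to upgrade this to $f$ being complex affine on $H$. This needs $H$ to be a complex subspace, which I would arrange by taking the complex span, since the linear IFS is complex linear. Once $f$ is complex affine on $H$, its image of $H\cap\supp\mu$ is either contained in a complex line or makes $\nu$ self-similar.

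The main obstacle is this structural step: producing the analytic conjugacy to a linear IFS, and verifying that the non-self-similar, non-curve hypotheses transfer into the failure of condition~\eqref{i:degenerate} in Theorem~\ref{thm: main}. I would try first the Koenigs linearisation of one map at its fixed point, expressing the other maps in that chart, and then use the rigidity dichotomy.
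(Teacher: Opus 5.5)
\textbf{Verdict: there is a genuine gap.} Your first case is correct in substance, but your second case rests on a structural dichotomy that is not a known result, so the non-conjugate case is not proved.

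\textbf{First case.} This is the paper's argument for Theorem~\ref{thm: conjselfconf}. You write $\nu=\mu_\phi$. Since $\nu$ is not supported in an analytic curve, $\mu$ is affinely irreducible. A non-affine holomorphic map is non-degenerate; your ``affine real part'' argument is Lemma~\ref{lma:cauchyriemann}.

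Only the citation needs repair. $\phi$ is defined only near the self-similar set, and a smooth cutoff destroys real-analyticity, so Theorem~\ref{thm: main} (which needs $f$ analytic on all of $\RR^k$) is not available. Instead, split $\nu$ into finitely many pieces and apply Theorem~\ref{thm: maingeneraldomain}~\eqref{i:nonexpanding} on connected neighbourhoods. This uses that planar self-similar measures are non-expanding, and that $\phi$ is non-affine on every component meeting $\supp(\mu)$ (Lemma~\ref{l:gatherpushforward}~\eqref{i:nonlinear}).

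\textbf{Second case: the gap.} No known result says that a holomorphic IFS not holomorphically conjugate to a self-similar one is analytically semi-conjugate to a complex-linear IFS on some $\CC^N$. The abstract's phrase ``conjugate to a linear IFS via an analytic map'' describes exactly your first case. You give no construction, and a relation $f\circ S_j=g_j\circ f$ is a strong rigidity constraint. Differentiate $f\circ S_w=g_w\circ f$ at the fixed point of $S_w$, where $Df\neq 0$: the multiplier of $g_w$ at its fixed point must then be an eigenvalue of the linear part $A_w$ of $S_w$. A general nonlinear IFS has no reason to satisfy this for all words $w$.

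Even granting such an $f$, the reduction to Theorem~\ref{thm: main} breaks. Condition~\eqref{i:degenerate} must be ruled out for the smallest \emph{real} affine subspace $H$ containing $\supp(\mu)$, and this $H$ need not be complex. Partial linearity on a totally real $H$ does not upgrade to complex affinity; Example~\ref{ex:pushtoparabola} is exactly this situation. Checking the complex span instead does not rule the condition out.

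\textbf{What the paper uses instead.} The non-conjugate case needs entirely different input. By the Poincar\'e--Siegel theorem \cite[Theorem~2.8.2]{KatokBook}, arguing as in \cite[Claim~6.1]{AHW23}, an IFS that is not holomorphically conjugate to linear is not $C^2$-conjugate to linear either. Then \cite[Theorem~1.1]{AHW25} of Algom, Rodriguez~Hertz and Wang gives polynomial decay; this is also where the hypothesis that the domain is a ball enters.

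Your instinct about Koenigs linearisation at a fixed point is relevant. Its role, however, is to upgrade a $C^2$ conjugacy to a holomorphic one, not to produce a linear model in the non-conjugate case. As written, your proposal proves the theorem only in the conjugate-to-linear case.
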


In fact, we prove Theorem~\ref{thm: analyticconformal} in the case when the conformal IFS can be conjugated to a self-similar IFS by a holomorphic diffeomorphism. 
The non-conjugate case is a direct consequence of a recent result of Algom, Rodriguez~Hertz and Wang~\cite{AHW25}. 

We emphasise that in our results, in particular Theorems~\ref{thm: main} and~\ref{thm: analyticconformal}, we do not assume that the IFS under consideration satisfies any separation conditions. 

\subsection*{Structure of the paper}
Section~\ref{ss:preliminaries} gives necessary preliminaries and references to relevant background literature. 
In Section~\ref{s:ideas} we state Theorem~\ref{thm: maingeneraldomain}, which is a stronger result than Theorem~\ref{thm: main} as it does not require the domain of the analytic map to be the whole of $\RR^k$. Theorem~\ref{thm: maingeneraldomain} shows that various combinations of conditions for the self-similar measure and analytic map guarantee polynomial Fourier decay for the pushforward. 

Section~\ref{s:proofassumingLoandAFD} deduces Theorems~\ref{thm: maingeneraldomain} from Lemma~\ref{lma: startingdecomp} and two additional ingredients, namely Theorems~\ref{thm: Lo} and~\ref{thm: AFD}. 
Lemma~\ref{lma: startingdecomp} approximates the Fourier transform of the image measure by an average of the Fourier transform of pieces of the self-similar measure across many frequencies; this was already proved as Lemmas~3.2 and~3.3 in~\cite{BY-quantitative}. 
Theorem~\ref{thm: Lo} establishes a \L{}ojasiewicz type inequality for self-similar measures and is proved in Section~\ref{s:lo}. 
Theorem~\ref{thm: AFD} establishes Fourier decay of self-similar measures outside a sparse set of frequencies and is proved in Section~\ref{s:afd}. 
Theorems~\ref{thm: lifttograph NE}, \ref{thm: lifttograph - expanding} and~\ref{thm: polygraph} are also proved in Section~\ref{s:proofassumingLoandAFD}; these results consider Fourier decay of lifts of self-similar measures to graphs of nonlinear maps. 
Section~\ref{s:conformal} applies our results to self-conformal measures, proving Theorem~\ref{thm: analyticconformal} using Theorem~\ref{thm: maingeneraldomain}. 
Finally, Section~\ref{sec: open} presents several possible avenues for future research. See Figure \ref{fig:structure} for a schematic representation of the paper. 

\begin{figure}[htbp]
\centering

\begin{tikzpicture}[
    node distance=0.8cm and 0.8cm,
    box/.style={
        draw,
        rounded corners,
        thick,
        align=center,
        font=\small,
        inner sep=2pt,
        minimum height=0.40cm
    },
    result/.style={box, fill=blue!8},
    section/.style={box, fill=green!8},
    source/.style={box,fill=orange!10},
    arrow/.style={->, thick}]

\node[result] (lem41) {Lemma~\ref{lma: startingdecomp}};

\node[source] (lemma8) [left=of lem41, xshift=0.3cm] {\cite[Lemmas 3.2 and 3.3]{BY-quantitative}};

\node[result] (thm37) [below left=of lem41, xshift=-0.5cm] {Theorem~\ref{thm: Lo}};

\node[result] (thm38) [below right=of lem41, xshift=0.5cm] {Theorem~\ref{thm: AFD}};

\node[section] (sec6) [below=of thm37] {Section~\ref{s:lo}\\Proof of Theorem~\ref{thm: Lo}};

\node[section] (sec5) [below=of thm38] {Section~\ref{s:afd}\\Proof of Theorem~\ref{thm: AFD}};

\node[result, minimum width=4.4cm] (thmmain) [below=1.3cm of lem41] {Theorem~\ref{thm: maingeneraldomain}, \ref{thm: lifttograph NE}, \ref{thm: lifttograph - expanding}, \ref{thm: polygraph}};

\node[result] (intro) [below=of thmmain, xshift=-1.0cm, yshift=-0.5cm] {Theorem~\ref{thm: main}};

\node[result] (thm15) [below left=of thmmain, xshift=6.0cm, yshift=-0.5cm] {Theorem~\ref{thm: analyticconformal}};

\node[source] (AHW25) [below = of thm15, yshift=0.5cm, xshift=-1cm]{\cite[Theorem~1.1]{AHW25}};

\node[source] (Khalil) [below= of sec5, yshift=-0.15cm, xshift=0.6cm] {\cite[Section~6]{Khalil}};

\draw[arrow] (Khalil) -- (sec5);

\draw[arrow] (AHW25) -- (thm15);

\draw[arrow] (lemma8) -- (lem41);
\draw[arrow] (sec6) -- (thm37);
\draw[arrow] (sec5) -- (thm38);

\draw[arrow] (lem41) -- (thmmain);
\draw[arrow] (thm37) -- (thmmain);
\draw[arrow] (thm38) -- (thmmain);

\draw[arrow] (thmmain) -- (intro);

\draw[arrow] (thmmain) -- (thm15);

\end{tikzpicture}

\caption{Structure of the paper.}
\label{fig:structure}
\end{figure}

\section{Preliminaries and background literature}\label{ss:preliminaries}

\subsection{General notation}
We use Vinogradov and Bachmann--Landau notation: given real or complex-valued functions $f,g$ we write $f \ll g$ or $f = O(g)$ to mean $|f| \leq C|g|$ pointwise for some constant $C > 0$, and write $f \asymp g$ if $f \ll g$ and $g \ll f$. Subscripts may indicate parameters that the implicit constants are allowed to depend on. 
Given $\bx \in \RR^k$ and $r > 0$, the open ball of radius $r>0$ centred at $\bx$ is denoted $B(\bx,r) = \{\by \in \RR^k : |\by-\bx| < r\}$. 
For a set $W \subset \RR^k$ and $r > 0$ we denote the $r$-neighbourhood by 
\[ W^{(r)} \coloneqq \{ y \in \RR^k : |x-y| < r \mbox{ for some } x \in W\}. \]

\subsection{Real analytic maps and their properties}\label{s:nondeg}
We explain some terminologies that we use. Let $k,d \geq 1$ be integers. 
\begin{defn}
    Given a non-empty set $K \subseteq \RR^k$, we say that $f = (f_1,\dotsc,f_d) \colon K \to \RR^d$ is \emph{(real) analytic} if there exists some open neighbourhood $U \subseteq \RR^k$ of $K$ such that for all $\bx = (x_1,\dotsc,x_k) \in U$ and $1 \leq j \leq d$, there is some power series in $x_1,\dotsc,x_k$ which converges to $f_j(\bx)$ in some open neighbourhood of $\bx$. 
\end{defn}
\vspace{0.2cm}
Now let $U \subseteq \RR^k$ be open, and let $f \colon U \to \RR^d$ be analytic. 
Let $\bv\in\mathbb{S}^{d-1}$. Define $f_\bv \colon U \to \RR$ by $f_{\bv}(\bx) \coloneqq \sum_{j=1}^{d}v_j f_j(\bx)$. Define 
\begin{equation}\label{e:definepv}
P_\bv \colon U \to\mathbb{R}^k, \qquad P_\bv(\bx) \coloneqq \nabla f_\bv(\bx) = (\nabla f (\bx))^T (\bv). 
\end{equation}
We define several different assumptions on our function $f$ that we will use. 
\begin{defn}
Let $f \colon U \to \RR^d$ be analytic. 
\begin{enumerate}
    \item We say that $f$ is \emph{non-degenerate} if for all balls $B \subset \RR^k$ and all $\bv \in \mathbb{S}^{d-1}$, the corresponding $P_\bv$ function is not constant on $B \cap U$. 
    \item We say that $f$ is \emph{non-conical} if for all balls $B \subset \RR^k$ and all $\bv \in \mathbb{S}^{d-1}$, $\bx \mapsto |P_\bv(\bx)|$ is not a constant function on $B \cap U$. 
    \item We say that $f$ is \emph{non-trapped} if for all balls $B \subset \RR^k$, $f(B \cap U)$ is not contained in an affine hyperplane in $\RR^d$. 
\end{enumerate}
\end{defn}
We make the following observations about the relations between these conditions. 
\begin{rem}\label{rem:degimplications}
    \begin{enumerate}
        \item If $f$ is non-conical then $f$ is clearly non-degenerate, and the two conditions are equivalent when $k=1$. 
        \item If $f$ is non-degenerate then $f$ is clearly non-affine and non-trapped. 
        \item The map $f \colon \RR^k \setminus \{ \mathbf{0} \} \to \RR$ ($k > 1$), $\bx \mapsto |\bx|$ (whose graph is a cone) is an example of a non-degenerate but conical function. 
        \item\label{i:liftdeg} The map $f \colon \RR \to \RR^2$, $x \mapsto (x,x^2)$ is an example of a non-affine and non-trapped analytic map that is degenerate. 
    \end{enumerate}
\end{rem}

The following proposition shows that our non-degeneracy is equivalent to a more familiar notion of non-degeneracy for analytic functions, see Kleinbock and Margulis~\cite{KM98}. 
\begin{prop}\label{prop: degequivalences}
    Given an analytic map $f \colon U \to \RR^d$, the following are equivalent: 
    \begin{enumerate}
        \item\label{i:deg} $f$ is degenerate, 
        \item\label{i:partiallin} $f$ is partially linear on some ball $B \subset \RR^k$ in the sense that there are $b_1,\dotsc,b_d \in \RR$, not all $0$, such that $\sum_i b_i f_i$ restricts to an affine map $B \cap U \to \RR$. 
        \item\label{i:graph} For some ball $B \subset \RR^k$, the graph of $f_{B \cap U}$ is contained in an affine hyperplane in $\RR^{k+d}$. 
    \end{enumerate}
\end{prop}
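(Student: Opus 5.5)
We will prove the cycle of implications \eqref{i:deg}$\Rightarrow$\eqref{i:partiallin}$\Rightarrow$\eqref{i:graph}$\Rightarrow$\eqref{i:deg}. The argument is elementary linear algebra combined with the fact that a function on a connected open set with vanishing gradient is constant. To ensure connectedness, we first shrink every ball. If $B \cap U$ is nonempty and open, it contains a smaller ball $B' \subseteq B \cap U$. All three properties pass from $B$ to $B'$: constancy of $P_\bv$, affineness of $\sum_i b_i f_i$, and containment of the graph in a hyperplane. So we may assume throughout that $B \subseteq U$ and that $B$ is connected. If $B\cap U$ is empty, the statements are vacuous; we would adopt the convention that balls meet $U$, since otherwise "degenerate" holds trivially.

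\eqref{i:deg}$\Rightarrow$\eqref{i:partiallin}: Suppose $P_\bv = \nabla f_\bv$ equals a constant vector $\mathbf{a}$ on the ball $B$ for some $\bv \in \mathbb{S}^{d-1}$. Then $g(\bx) \coloneqq f_\bv(\bx) - \langle \mathbf{a}, \bx\rangle$ has zero gradient on the connected set $B$, so $g$ is constant there. Hence $f_\bv = \sum_i v_i f_i$ is affine on $B$. The coefficients $b_i = v_i$ are not all zero because $|\bv|=1$.

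\eqref{i:partiallin}$\Rightarrow$\eqref{i:graph}: Suppose $\sum_i b_i f_i(\bx) = \langle \mathbf{a},\bx\rangle + c$ on $B$. Then the graph $\{(\bx,f(\bx)) : \bx\in B\}$ lies in the set
\[ \{(\bx,\by)\in\RR^{k+d} : \langle \bb,\by\rangle - \langle \mathbf{a},\bx\rangle = c\}. \]
This is an affine hyperplane because its normal vector $(-\mathbf{a},\bb)$ is nonzero, as $\bb \neq \bzero$.

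\eqref{i:graph}$\Rightarrow$\eqref{i:deg}: Suppose the graph over $B$ lies in $\{(\bx,\by) : \langle \mathbf{a},\bx\rangle + \langle \bb,\by\rangle = c\}$ with $(\mathbf{a},\bb)\neq \bzero$. We must first rule out $\bb = \bzero$. If $\bb=\bzero$, then $\mathbf{a}\neq\bzero$ and $\langle \mathbf{a},\bx\rangle = c$ would hold for all $\bx$ in the open ball $B$. This is impossible, because an open set is not contained in a hyperplane of $\RR^k$. So $\bb\neq\bzero$, and we may set $\bv = \bb/|\bb|$. Then $f_\bv(\bx) = (c - \langle \mathbf{a},\bx\rangle)/|\bb|$ on $B$, so $P_\bv = -\mathbf{a}/|\bb|$ is constant on $B$. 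Hence $f$ is degenerate.

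The only points needing care are the connectedness reduction in the first step and the exclusion of $\bb=\bzero$ in the last. Analyticity of $f$ is not really needed beyond differentiability; it matters only if one wants local degeneracy to propagate to all of a connected $U$, which the statement does not require.
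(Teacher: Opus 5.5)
Your proof is correct and takes essentially the same route as the paper. It uses the same three elementary implications, and in particular rules out $\bb=\bzero$ in (3)$\Rightarrow$(1) by the same observation that a nontrivial affine relation in $\bx$ alone cannot hold on an open set; the paper merely passes back through (2) instead of going directly to (1). Your shrinking of $B$ to a ball inside $B\cap U$ is a small gain in care: the paper's claim that constancy of $P_\bv$ on $B\cap U$ is equivalent to $f_\bv$ being affine there tacitly needs $B\cap U$ to be connected.
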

\begin{proof}
    \eqref{i:deg} is equivalent to the existence of a ball $B$ and $\bv\in\bS^{d-1}$ such that $P_\bv$ is constant on $B \cap U$ (or equivalently $f_\bv$ is affine on $B \cap U$). 
    This implies that $(v_1 f_1+\dots+v_d f_d)(\bx) = L(\bx)$ on $B \cap U$ for some affine form $L$, which gives~\eqref{i:partiallin}. 
    Now~\eqref{i:partiallin} implies that $(\bx,f_1,\dotsc,f_d)$ satisfies some non-trivial affine form on $B \cap U$, which is equivalent to~\eqref{i:graph}. 
    
    Conversely,~\eqref{i:graph} implies that there are numbers $a_1,\dotsc,a_k,b_1,\dotsc,b_d,b$, not all $0$, such that
    \[
    \sum^{k}_{i=1} a_i x_i+\sum^d_{i=1} b_i f_i(\bx) + b = 0
    \]
    for all $\bx = (x_1,\dotsc,x_k) \in B \cap U$. 
    Not all the $b_i$ can be zero otherwise we would have a linear condition which cannot be satisfied by all values of the free variables $x_1,\dotsc,x_k$. Therefore we see~\eqref{i:partiallin} holds. 
    Now~\eqref{i:partiallin} implies that $P_{(b_1,\dotsc,b_d)}$ is constant on $B \cap U$, giving~\eqref{i:deg}. 
\end{proof}

Note that our example $\bx \mapsto |\bx|$ for a non-degenerate conical function is not globally analytic on $\RR^k$. Indeed, the following result holds. 
\begin{prop}\label{prop: globaldegcon}
    Let $f \colon \mathbb{R}^k \to \RR^d$ be analytic on $\RR^k$. 
    Then $f$ is conical if and only if $f$ degenerates.
\end{prop}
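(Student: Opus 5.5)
One direction is immediate. By Remark~\ref{rem:degimplications}, every non-conical map is non-degenerate. Taking the contrapositive, if $f$ is degenerate then it is conical. In detail: degeneracy gives a ball $B$ and $\bv\in\bS^{d-1}$ with $P_\bv$ constant on $B$, so $|P_\bv|$ is constant on $B$ as well. This direction does not use global analyticity at all.

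For the converse, I assume $f$ is conical. Then there are a ball $B$ and $\bv\in\bS^{d-1}$ such that $|\nabla f_\bv|^2 \equiv c^2$ on $B$. Set $g = f_\bv$, which is real analytic on all of $\RR^k$. The function $|\nabla g|^2 - c^2$ is analytic on the connected set $\RR^k$ and vanishes on an open ball, so by the identity theorem it vanishes identically. Hence $g$ solves the eikonal equation $|\nabla g|\equiv c$ on the whole of $\RR^k$. If $c=0$ then $\nabla g\equiv 0$, so $g$ is constant, $P_\bv$ is constant, and $f$ is degenerate. So the remaining case is $c>0$, and the goal is to show that a global analytic (indeed $C^2$) solution of $|\nabla g| = c$ on $\RR^k$ is affine. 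Then $P_\bv=\nabla g$ is constant and $f$ is degenerate.

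The key step is this rigidity statement for the eikonal equation, and I expect it to be the main obstacle. My plan is the characteristics argument.

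\begin{enumerate}
\item Differentiate $|\nabla g|^2=c^2$ to get $D^2g\,\nabla g = 0$. This says that $\nabla g$ is constant along the integral curves of $\nabla g$. Therefore these curves are straight lines $\bx + t\,\nabla g(\bx)$, $t\in\RR$, defined for all time because $g$ is global.
\item Along each such line, $g$ increases linearly at rate $c^2$ in $t$, so $g(\bx+t\nabla g(\bx)) = g(\bx)+c^2 t$.
\item Compare this with $|\nabla g|=c$, which makes $g$ $c$-Lipschitz. For any two points $\bx,\by$ and $s,t$ we get
\[ |g(\bx)+c^2 s - g(\by) - c^2 t| \leq c\,|\bx+s\nabla g(\bx)-\by-t\nabla g(\by)|. \]
\item Suppose $\nabla g(\bx)\neq\nabla g(\by)$. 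Choose $s=-t$ and send $t\to\infty$. The left side grows like $2c^2 t$. The right side grows like $c\,t\,|\nabla g(\bx)+\nabla g(\by)|$, which is strictly less than $2c^2t$ because $|\nabla g(\bx)+\nabla g(\by)|<2c$ by strict convexity of the Euclidean norm. This is a contradiction.
\end{enumerate}

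So $\nabla g$ is constant on $\RR^k$, and $f$ is degenerate.

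This global step is exactly where analyticity on all of $\RR^k$ is used. The cone $\bx\mapsto|\bx|$ fails to be analytic at the origin, and a singularity of that kind is what the straight-line argument rules out, since the characteristic lines through $\bx$ and $\by$ must be defined for all real $t$.
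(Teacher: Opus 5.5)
Your proof is correct. It is more self-contained than the paper's, which is essentially a citation. From $|\nabla f_\bv|$ being constant, the paper appeals to the rigidity theory of the eikonal equation (citing Sakai) to conclude that $f_\bv$ is affine. It does not explain why constancy of $|P_\bv|$ on a single ball should give the eikonal equation on all of $\RR^k$. (Its first line also writes ``$P_\bv$ is constant'' where $|P_\bv|$ is meant.)

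You supply both missing pieces explicitly:
\begin{enumerate}
\item The identity theorem, applied to the analytic function $|\nabla g|^2-c^2$, globalises the equation.
\item The characteristics argument proves the Euclidean rigidity directly. Since $D^2g\,\nabla g=0$, the gradient flow lines are straight lines $\bx+t\nabla g(\bx)$ along which $g$ grows at rate $c^2$. Comparing $g(\by+t\nabla g(\by))-g(\bx-t\nabla g(\bx))=g(\by)-g(\bx)+2c^2t$ with the $c$-Lipschitz bound, and using $|\nabla g(\bx)+\nabla g(\by)|<2c$, gives a contradiction for large $t$.
\end{enumerate}
The one step worth an extra sentence is completeness of these lines. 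On its maximal interval of existence an integral curve has constant velocity, so it cannot escape in finite time.

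The citation buys brevity. Your argument needs only $g\in C^2$ once the equation holds globally. It also shows exactly where global analyticity enters: in globalising the equation, and in having the characteristic lines available for all $t\in\RR$. This is what fails for $\bx\mapsto|\bx|$. Treating $c=0$ separately is correct and necessary, since the strict-convexity step uses $c>0$.
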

\begin{proof}
    We only need to prove the forward implication. 
    Suppose $P_\bv$ is constant for some $\bv$. Then $|\nabla f_\bv|$ is constant. It is well known from the theory of Eikonal equation that $f_\bv$ must be affine, see for example~\cite{Sakai}. But $f_\bv$ being affine implies that $P_\bv \colon \bx \mapsto \nabla f_\bv(\bx)$ is a constant map, hence the degeneracy. 
\end{proof}

For our uniform measure \L{}ojasiewicz inequality we will need the following definition. 
\begin{defn}\label{d:compact}
    Let $U \subseteq \RR^k$ be a connected open set and consider a family $\mathcal{F}$ of analytic maps from $U \to \RR^d$. 
    We say that $\mathcal{F}$ is \emph{compact} if it is compact in the compact-open topology, in other words every sequence of functions in $\mathcal{F}$ contains a subsequence which converges uniformly on compact subsets $K$ of $U$ to an element of $\mathcal{F}$. 
\end{defn}

\subsection{Self-similar sets and measures}\label{ss:ifs}
    For general fractal geometry background we refer the reader to the books~\cite{FaTechniques,Ma1,Ma2,BSSbook}. 
    Let $k\geq 1$ and $N>1$ be integers, and let $D \subset \RR^k$ be compact. 
    Let $\Lambda = \{f_i \colon D \to D\}_{1 \leq i \leq N}$ be contraction maps (i.e. $\rho$-Lipschitz maps for some $\rho < 1$); $\Lambda$ is called an \emph{iterated function system} or \emph{IFS} for short. 
    By Hutchinson's theorem~\cite{Hutchinson}, there is a unique non-empty compact set $K \subset D$, called the \emph{attractor}, and for $p_1,\dotsc,p_N\in (0,1)$ with $\sum_i p_i=1$ there is a unique Borel probability measure $\mu$ whose support equals $K$, such that 
    \begin{equation*}
    K=\bigcup_{i}f_i(K), \qquad \mu=\sum_{i} p_i \mu_{f_i} 
    \end{equation*}
    (recalling that $\mu_{f_i}$ denotes the pushforward of $\mu$ by $f_i$). 
    We will always assume that the contractions do not share a common fixed point, ensuring that $K$ is uncountable and $\mu$ is non-atomic. 

    If there exist $r_1,\dotsc,r_N\in (0,1)$ and $O_1,\dotsc,O_N\in O_k(\mathbb{R})$ (possibly reflected) rotations and $\bt_1,\dotsc,\bt_N\in\mathbb{R}^k$ such that $\Lambda$ consists of the similarity maps
    \[
    \Lambda=\{f_i(\cdot)=r_iO_i(\cdot)+\bt_i\}_{i\in\{1,\dotsc,N\}},
    \]
    then we say that $\Lambda, K, \mu$ are self-similar. 
    In this case, we say that $\Lambda,K,\mu$ are homogeneous if $r_i O_i$ are all the same for $i\in\{1,\dotsc,N\}$. 

    Now let $\mu$ be a self-similar measure on $\RR^k$. 
Without loss of generality, we can assume that the support of $\mu$ is contained in a unit cube and has diameter at most $1$. 
Let $\Lambda$ be an IFS generating $\mu$. For each $\omega\in\Lambda^*$ (the space of finite-length sequences with symbols in $\Lambda$), following the composition of maps $\omega_0\omega_1\dotsb$, there is a least $l\geq 0$ such that the contraction ratio corresponding to $\omega_0^{l}=\omega_0\dotsb \omega_l$ is at most $1/2^n$. We write $l=l_{\omega,n}$. Notice that the finite collection of paths
\[
\{\omega_{0}^{l_{\omega,n}}\}_{\omega\in\Lambda}
\]
corresponds to a covering of $\supp(\mu)$. 
Such a covering uses similar copies of $\supp(\mu)$ of diameters at most $1/2^n$ and at least $\rho_m 2^{-n}$, where $\rho_m$ is smallest contraction ratio for $\mu$. Let $\mu_{\omega,n}$ be the corresponding similar copy of $\mu$ with probability weight $|\mu_{\omega,n}|$ (we call $\mu_{\omega,n}$ a \emph{$2^{-n}$-branch} of $\mu$). 
Clearly, each such branch $\supp(\mu_{\omega,n})$ intersects at least one and at most $2^k$ many dyadic cubes in $\cD_n$. We choose only one such dyadic cube and associate it with $\mu_{\omega,n}$. As a result, for each dyadic cube $D_n\in\cD_n$, there is a collection of similar copies $\mu_{\omega,n}$ that are associated with this cube. We write $\mu_{\omega,n}\sim D_n$ for this association, and denote such a collection of $\omega$ by $\Lambda_{D_n}$. 
We then obtain the following decomposition $\{\Lambda_{D_n}\}_{D_n\in\cD_n}$ of $\Lambda$, which induces a decomposition (or disintegration) of $\mu$. For each $\omega\in\Lambda_{D_n}$, the copy $\mu_{\omega,n}$ is supported in $3D_n$, the tripling of $D_n$ with the same centre. 

\subsection{Affinely irreducibility}
\begin{defn}
We say that a Borel probability measure $\mu$ on $\mathbb{R}^k$ is \emph{affinely irreducible} if $\mu(H)=0$ for every affine hyperplane $H\subset\mathbb{R}^k$. 
\end{defn}
For example, if $k=1$, then every self-similar measure whose support is not a finite set must be affinely irreducible. As another example, notice that normalised Lebesgue measures on bounded open sets are affinely irreducible. This example extends to natural surface carried measures on non-degenerate manifolds.

\subsection{Expanding systems}
\begin{defn}
Let $G$ be a group, and $F\subset G$ a finite set. 
We say that $G$ is \emph{non-expanding} with respect to $F$ (or $F$ is \emph{non-expanding}) if for each $\epsilon>0$,
\[
\# F_n\ll e^{\epsilon n},
\]
where $F_n\subset G$ is the collection of $f_1 \dotsb f_n$ for $f_1,\dotsc,f_n\in F$. 
\end{defn}
The Tits alternative~\cite{OS95,Tits} gives a more algebraic description of the expanding property for finitely generated linear groups. 

\begin{defn}[non-expanding self-similar system]
Let $\Lambda$ be a self-similar IFS. We say that it is \emph{non-expanding} if the collection of linear parts $\{O_i\}$ is non-expanding when viewed as a subset of the Euclidean group on $\mathbb{R}^k$. 
We also say that a self-similar set/measure is non-expanding if there exists a non-expanding self-similar IFS generating the set/measure. 
\end{defn}
For example, every self-similar system with abelian rotation group is non-expanding. 
In particular, every self-similar system in $\mathbb{R}$ or $\mathbb{R}^2$ is non-expanding.

\subsection{Fourier decay of non-linear images of self-similar measures}
The study of non-linear images of self-similar measures was initiated by Kaufman~\cite{K82} for Bernoulli convolutions on the real line and later extended by~\cite{Tsujii,MS18,MosqueraOlivo} to homogeneous self-similar measures. 
Finally, the problem for $f\colon \mathbb{R}\to\mathbb{R}$ was completed for general self-similar measures by~\cite{ACWW25,BB25}. 
Recently, in~\cite{BY-quantitative} we quantified (explicitly lower-bounded the decay exponent of) all of the previous results and moreover considered the more general case $f\colon \mathbb{R}^k\to\mathbb{R}$. For vector valued functions, \cite{AK} considered the case $f\colon \mathbb{R}\to\mathbb{R}^d$ where $d\geq 1$ can be arbitrary. The input of this paper is to provide a complete answer to the Fourier decay problem for all analytic $f\colon \mathbb{R}^k\to\mathbb{R}^d$ where $k,d\geq 1$ are arbitrary. There are still some `boundary cases' that we do not cover when the domain of $f$ is not the whole of $\RR^k$, but these cases are rather special, see Section~\ref{sec: open}. 

\subsection{Self-conformal measures}\label{ss:selfconfprelim}
Self-conformal measures are a natural generalisation of self-similar measures. 
Given a set $D \subset \RR^k$, we call a map $f \colon D \to \RR^k$ \emph{conformal} if on some open neighbourhood $U$ of $D$ it is $C^{1+\alpha}$ and preserves the angle and orientation of directed curves through each point in $U$. Equivalently, it is $C^{1+\alpha}$ on $U$ and the Jacobian at each point is a positive scalar multiplied by a rotation matrix. 
When $k=2$, conformal maps are precisely holomorphic maps with a non-vanishing complex derivative at every point in $U$. When $k\geq 3$, conformal maps have a very restricted form by Liouville's theorem and are in fact M\"obius transformations: they can be written as a composition of translations, rotations, reflections, and inversions in $(k-1)$-spheres. Note in particular that when $k \geq 2$, conformal maps are real analytic. 

Next, we introduce self-conformal measures. For simplicity, we do not attempt to give the most general possible setup. 
Let $\Omega \subset D \subset U \subset \CC$ where $\Omega$ is open, convex and non-empty, $D = \overline{\Omega}$ (i.e. $D$ is the topological closure of $\Omega$), and $U$ is open, convex and bounded. 
Let $\varphi_1,\dotsc,\varphi_N \colon U \to \CC$ be injective conformal maps satisfying $\overline{\varphi_i(U)} \subset U$ and $\varphi_i(D) \subset \Omega$ for each $i$. 
Assume moreover that 
\begin{align*} 
0 & < \inf\{||(D \varphi_i)_z|| : z \in U, 1 \leq i \leq N \} \\ &\leq \sup \{ ||(D \varphi_i)_z|| : z \in U, 1 \leq i \leq N \} < 1. 
\end{align*}
Assume that $\varphi_1,\dotsc,\varphi_N$ do not preserve a common fixed point. We call $\{\varphi_i,\dotsc,\varphi_N\}$ a \emph{conformal IFS}. 
By Hutchinson~\cite{Hutchinson}, there is a unique non-empty compact set $K \subseteq D$ called the \emph{self-conformal set} associated with this system, and for $p_1,\dotsc,p_n \in (0,1)$ with $\sum_i p_i = 1$ there is a unique Borel probability measure $\mu$ called the \emph{self-conformal measure}, such that
\begin{equation*}
    K=\bigcup_{i} \varphi_i(K), \qquad \mu=\sum_{i} p_i \mu_{\varphi_i}.
\end{equation*}
Note that $K$ is uncountable, $\mu$ is non-atomic, and the support of $\mu$ equals $K$. 
In this paper, whenever we talk about self-conformal IFSs, sets or measures, we will mean those which arise in this way. 
Most of our focus will be on the $k=2$ case, but we will also consider $k \geq 3$. 

For measures on the line (i.e. $k=1$), Fourier decay for nonlinear fractal measures (in particular self-conformal measures) has received a great deal of attention, see for instance \cite{AHW21,AHW22,KaufmanCtdFrac,QR,JordanSahlsten,BDfuchsian,LNP,SS,BakerKhalilSahlsten} and the survey~\cite{Sahlsten23survey}. 
It is known from a combination of results that self-conformal measures for an IFS of analytic contractions $\RR \to \RR$ which are not all affine have polynomial Fourier decay~\cite{AHW23,BS23,ACWW25,BB25}. 
In the $k=2$ case, polynomial Fourier decay of self-conformal measures which are not conjugate to linear has been studied in~\cite{AHW25} (see Section~\ref{ss:selfconfplane} for more details). Polynomial Fourier decay for self-conformal measures in $\RR^k$ for general $k$ has been considered in~\cite{BakerKhalilSahlsten}, under the strong separation condition and a uniform non-integrability assumption.

\section{Proof ideas and further results}\label{s:ideas}
\subsection{More refined pushforward results}\label{ss:subset}
In the introduction, we assumed the pushforward maps were analytic on the whole of $\RR^k$. However, we can also prove several results for maps which are only analytic on an open neighbourhood of the support of $\mu$. 
We will deduce Theorem~\ref{thm: main} easily from Theorem~\ref{thm: maingeneraldomain} in Section~\ref{ss:deduceheadlinefrommain}. 

\begin{thm}\label{thm: maingeneraldomain}
    Let $k,d \geq 1$ be integers, let $\mu$ be an affinely irreducible self-similar measure on $\RR^k$, let $U \subseteq \RR^k$ be an open set containing $\supp(\mu)$, and let $f \colon U \to \RR^d$ be real analytic. 
    Assume that at least one of the following three conditions holds: 
    \begin{enumerate}
        \item\label{i:expanding} $f$ is non-conical, or 
        \item\label{i:nonexpanding} $\mu$ is non-expanding and $f$ is non-degenerate, or 
        \item\label{i:selfsimhaspoly} $\mu$ has polynomial Fourier decay and $f$ is non-trapped.
    \end{enumerate}
    Then $\mu_f$ has polynomial Fourier decay. 
\end{thm}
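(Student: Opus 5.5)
Fix $\bxi=|\bxi|\bv$ with $\bv\in\bS^{d-1}$ and large $|\bxi|$. First I localise at scale $2^{-n}$ with $2^{n}\approx|\bxi|^{\alpha}$ for a small $\alpha\in(0,1)$, using the decomposition $\{\Lambda_{D_n}\}$ of Section~\ref{ss:ifs}. On each branch $\mu_{\omega,n}$, which is supported in $3D_n$ with centre $\bx_{D}$, I Taylor expand $f_\bv$ to first order: $\langle\bxi,f(\bx)\rangle=|\bxi|\bigl(f_\bv(\bx_D)+\langle P_\bv(\bx_D),\bx-\bx_D\rangle\bigr)+O(|\bxi|2^{-2n})$. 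With $\alpha>1/2$ the error term is $o(1)$. Since each branch is a similar copy of $\mu$, the branch integral is, up to a unimodular factor, $\widehat{\mu}$ evaluated at a frequency of size about $|\bxi|\,|P_\bv(\bx_D)|\,r_\omega$ in a direction rotated by $O_\omega^{T}$. This is the content of Lemma~\ref{lma: startingdecomp}, giving
\[
|\widehat{\mu_f}(\bxi)|\ll \sum_{\omega}|\mu_{\omega,n}|\,\bigl|\widehat{\mu}\bigl(|\bxi|2^{-n}O_\omega^T P_\bv(\bx_{\omega})\bigr)\bigr| + |\bxi|^{-\sigma_0},
\]
where the frequencies have size about $|\bxi|^{1-\alpha}|P_\bv|$. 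After one further averaging over sub-branches, the frequencies form a collection spread in scale and direction. It therefore suffices to show two things. First, only a $|\bxi|^{-\sigma}$ proportion of the $\mu$-mass sits where $|P_\bv|$ is tiny. Second, the remaining frequencies avoid the exceptional set of $\widehat\mu$ for most of the mass.

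For the first point I use the uniform \L{}ojasiewicz-type inequality (Theorem~\ref{thm: Lo}). The family $\{P_\bv:\bv\in\bS^{d-1}\}$ is compact in the sense of Definition~\ref{d:compact}, and non-trapping or non-degeneracy rules out identically vanishing members on any ball. Together with affine irreducibility of $\mu$, this should give $\mu\{\bx:|P_\bv(\bx)|<\delta\}\ll\delta^{\kappa}$ uniformly in $\bv$. The same inequality, applied to the analytic maps $\bx\mapsto P_\bv(\bx)-\mathbf{c}$ and $\bx\mapsto|P_\bv(\bx)|-c$, controls how concentrated the vectors $P_\bv(\bx_\omega)$ are.

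The three cases then differ in how the exceptional frequencies are handled.
\begin{itemize}
\item In case~\eqref{i:selfsimhaspoly}, I only need $|P_\bv|$ not tiny, which follows from non-trapping, and then apply decay of $\widehat\mu$ directly.
\item In case~\eqref{i:expanding}, I use Theorem~\ref{thm: AFD}: the frequencies where $\widehat\mu$ fails to decay lie in a set which is sparse in scale, in the sense that few dyadic shells are bad. Since $f$ is non-conical, the Lojasiewicz bound applied to $|P_\bv|-c$ shows that the magnitudes $|\bxi|2^{-n}|P_\bv(\bx_\omega)|$ do not concentrate on a few bad shells. This holds regardless of the rotations, so it is not affected by expansion of the rotation parts.
\item In case~\eqref{i:nonexpanding}, the magnitude may be constant, but the direction $P_\bv/|P_\bv|$ varies. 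Non-expansion means there are only $e^{\epsilon n}$ distinct rotations $O_\omega$ at level $n$. Sparsity of the exceptional set in direction, combined with non-degeneracy via the Lojasiewicz inequality for $P_\bv-\mathbf c$, then bounds the mass hitting bad frequencies by $e^{\epsilon n}|\bxi|^{-\kappa'}$.
\end{itemize}
Summing the good contributions, which are polynomially small, with the bad mass gives the result.

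The main obstacle is the uniformity of the Lojasiewicz inequality over $\bv$ and over translates $\mathbf c$, with respect to a fractal measure rather than Lebesgue measure. I would first try to prove it by compactness plus induction on scales via self-similarity. At a fixed scale, affine irreducibility prevents $\mu$ from concentrating near the zero set of a non-trivial analytic function. The intended mechanism is to zoom into branches, renormalise the function, use compactness to extract a limit which is still nonvanishing, and iterate; this iteration is where the power-law bound $\delta^{\kappa}$ should come from.
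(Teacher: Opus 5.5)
Your overall architecture is the paper's. You apply Lemma~\ref{lma: startingdecomp} with $|\bxi|\approx 2^{n/\alpha}$, $\alpha\in(1/2,1)$; the paper takes $|\bxi|\asymp 2^{1.5n}$. In case~\eqref{i:selfsimhaspoly} you discard the branches near $\{|P_\bv|\text{ small}\}$ via Theorem~\ref{thm: Lo} and use the decay of $\widehat\mu$ on the rest. In cases~\eqref{i:expanding} and~\eqref{i:nonexpanding} you combine Theorem~\ref{thm: AFD} with Theorem~\ref{thm: Lo}, applied to level sets of $|P_\bv|$ (rotations ignored) and to translates $P_\bv-\by$ (rotations counted via non-expansion) respectively.

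Case~\eqref{i:selfsimhaspoly} is fine. The gap is in how you handle the exceptional frequencies, because you misstate what Theorem~\ref{thm: AFD} gives. In case~\eqref{i:expanding} you rely on ``few dyadic shells are bad''. This is false: for the Cantor--Lebesgue measure $|\widehat\mu(3^m)|$ is a nonzero constant, so non-decaying frequencies occur at every scale. It would also not suffice. A shell of frequency magnitudes $[\rho,2\rho]$ with $\rho\approx|\bxi|2^{-n}$ pulls back to $\{c\leq|P_\bv|\leq 2c\}$, a macroscopic region to which Theorem~\ref{thm: Lo} assigns no smallness. In case~\eqref{i:nonexpanding}, ``sparsity in direction'' is likewise not the relevant property, although your bound $e^{\epsilon n}|\bxi|^{-\kappa'}$ suggests you have the right count in mind.

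The usable content of Definition~\ref{d:afd}, together with the fact that $\widehat\mu$ is Lipschitz, is this: for $R\asymp|\bxi|2^{-n}$, the set $\{|\bxi'|<R : |\widehat\mu(\bxi')|\geq R^{-\delta}\}$ is covered by $\ll R^{\epsilon}$ \emph{unit} balls. Theorem~\ref{thm: Lo} must then be applied at the matching fine scale $2^{n}/|\bxi|$.

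Group the branches by scaling ratio $r$, which takes polynomially many values in $n$. In case~\eqref{i:nonexpanding} also group by rotation $O$, which takes at most $e^{\epsilon n}$ values. Within a class, consider the frequency $rO|\bxi|P_\bv(\bx_0)$ of a branch attached to a cube with centre $\bx_0$. In case~\eqref{i:nonexpanding}, it lies in a given unit ball only if $P_\bv(\bx_0)$ lies in a fixed ball of radius $\asymp 2^n/|\bxi|$. In case~\eqref{i:expanding} the rotation is unknown, so the unit ball becomes a unit-width annulus. The condition is then that $|P_\bv(\bx_0)|$ lies in a fixed interval of length $\asymp 2^n/|\bxi|$.

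These branches are supported within $O(2^{-n})$ of $\bx_0$, and $2^{-n}\ll 2^n/|\bxi|$. Hence their total weight is at most $\mu(\{\bx:|P_\bv(\bx)-\by|\leq C2^n/|\bxi|\})$, respectively $\mu(\{\bx:||P_\bv(\bx)|^2-c^2|\leq C2^n/|\bxi|\})$. Theorem~\ref{thm: Lo} bounds these by $\ll(2^n/|\bxi|)^\eta$ uniformly in $\bv,\by,c$. Use the analytic function $|P_\bv|^2-c^2$ here, not $|P_\bv|-c$, which is not analytic where $P_\bv=0$.

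Summing over the $\ll R^{\epsilon}$ bad balls and $\ll e^{\epsilon n}$ classes, with $\epsilon$ small relative to $\eta$, bounds the bad mass polynomially. The remaining branches contribute $\ll R^{-\delta}$. This is exactly the paper's Claim~\ref{claim:boundnumber} and its annulus analogue.

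A few smaller points:
\begin{itemize}
\item Your extra averaging over sub-branches is unnecessary.
\item You should first reduce to connected $U$, as Theorem~\ref{thm: Lo} requires.
\item Theorem~\ref{thm: Lo} is available, so you need not reprove it. Your sketch of it would in any case need the uniform degree bound of Lemma~\ref{lma: choice of D} and the fact (Lemma~\ref{lma: prod of SS}) that $\mu$ does not charge proper analytic subvarieties.
\end{itemize}
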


We next make some comments about how sharp Theorem~\ref{thm: maingeneraldomain} is. 
\begin{rem}\label{rem:sharp}
    \begin{enumerate}
        \item\label{i:dreamconj} We are not presently able to prove that the conclusion of Theorem~\ref{thm: maingeneraldomain} holds when the non-conicality assumption~\eqref{i:expanding} is replaced by non-degeneracy (without further assumptions on $\mu$). 
        This is precisely \cite[Conjecture~5.14]{BY-quantitative} -- Theorem~\ref{thm: maingeneraldomain} makes very good progress towards this conjecture, for example Theorem~\ref{thm: maingeneraldomain}~\eqref{i:nonexpanding} resolves it affirmatively in the case $k \in \{1,2\}$. 
        \item The non-degeneracy assumption in~\eqref{i:nonexpanding} cannot be relaxed, because if $f$ were degenerate then by the proof of Proposition~\ref{prop: converse} there would be some non-Rajchman self-similar measure whose pushforward is non-Rajchman. 
        \item The non-trapped assumption in~\eqref{i:selfsimhaspoly} cannot be relaxed, because if $f(B \cap U)$ were contained in some affine hyperplane in $\RR^d$ (for some ball $B$) then the image of any self-similar measure whose support intersects $f(B \cap U)$ would not have Fourier decay along the direction orthogonal to the hyperplane. 
    \end{enumerate}
\end{rem}

 Regarding Theorem~\ref{thm: maingeneraldomain}~\eqref{i:selfsimhaspoly}, we note that while some self-similar measures (such as the Cantor--Lebesgue measure) are non-Rajchman, in the line self-similar measures `typically' have polynomial Fourier decay in some very strong sense~\cite{Solomyak}, and countably many specific examples with polynomial Fourier decay are known~\cite{DaiFengWang,Streck}. 

Our method proving Theorems~\ref{thm: maingeneraldomain} can be used to show that lifts of self-similar measures to the graph of a nonlinear function have Fourier decay in all nontrivial directions. 
\begin{thm}\label{thm: lifttograph NE}
    Let $k,d \geq 1$ be integers and let $\mu$ be a affinely irreducible non-expanding self-similar measure on $\RR^k$. 
    Let $U \subseteq \RR^k$ be an open set containing $\supp(\mu)$, and let $f \colon U \to \RR^d$ be analytic and non-degenerate. 
    Define $T \colon U \to \RR^{k+d}$ by $\bx \mapsto (\bx,f(\bx))$. 
    Then there exists $\sigma > 0$ such that for all $\bxi \in \RR^{k+d}$ of unit length, with the last $d$ coordinates of $\bxi$ not all zero, there exists $c_{\bxi} > 0$ such that for all $t > 0$, 
    \[ |\widehat{\mu_T(t \bxi)}| \leq c_{\bxi} t^{-\sigma}. \]
\end{thm}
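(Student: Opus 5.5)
Write $\bxi = (\ba,\bb)$ with $\ba \in \RR^k$ and $\bb \in \RR^d \setminus\{\bzero\}$. The plan is to reduce Theorem~\ref{thm: lifttograph NE} to Theorem~\ref{thm: maingeneraldomain}~\eqref{i:nonexpanding}, applied to the scalar map
\[ g_{\bxi} \colon U \to \RR, \qquad g_{\bxi}(\bx) \coloneqq \langle \ba,\bx\rangle + \langle \bb, f(\bx)\rangle . \]
Indeed,
\[ \widehat{\mu_T}(t\bxi) = \int e^{-2\pi i t(\langle \ba,\bx\rangle + \langle \bb,f(\bx)\rangle)}\,d\mu(\bx) = \widehat{\mu_{g_\bxi}}(t), \]
so it suffices to show that $\mu_{g_\bxi}$ has polynomial Fourier decay with an exponent $\sigma$ independent of $\bxi$. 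The constant $c_\bxi$ is allowed to depend on $\bxi$, which is why the statement is phrased this way.

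\textbf{Non-degeneracy of $g_\bxi$.} For $d'=1$ the only unit directions are $\pm 1$. We need $P_{\pm1} = \nabla g_\bxi = \ba + (\nabla f)^T\bb$ to be nonconstant on every ball intersected with $U$. If it were constant on some $B\cap U$, then $(\nabla f)^T \bb$ would be constant there. Writing $\bb = |\bb|\bv$ with $\bv \in \bS^{d-1}$, this means $P_\bv$ is constant on $B \cap U$, contradicting the non-degeneracy of $f$. Hence $g_\bxi$ is a real analytic, non-degenerate map $U \to \RR$. Since $\mu$ is affinely irreducible and non-expanding, Theorem~\ref{thm: maingeneraldomain}~\eqref{i:nonexpanding} gives $|\widehat{\mu_{g_\bxi}}(t)| \leq c_\bxi t^{-\sigma_\bxi}$.

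\textbf{Uniform exponent.} The main obstacle is showing that $\sigma$ can be taken independent of $\bxi$. Theorem~\ref{thm: maingeneraldomain} as stated gives an exponent depending on the map. I would therefore open up its proof, namely Lemma~\ref{lma: startingdecomp} together with Theorems~\ref{thm: Lo} and~\ref{thm: AFD}. The family $\{g_\bxi/|\bb| : |\bxi| = 1,\ \bb \neq \bzero\}$ is not compact, since $\ba/|\bb|$ may be unbounded. Still, the exponent should depend only on Łojasiewicz-type data of the gradients $\nabla g_\bxi$ relative to $\mu$, and a uniform version of these data is exactly what the uniform Łojasiewicz inequality (Theorem~\ref{thm: Lo}) for compact families (Definition~\ref{d:compact}) provides.

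I would split into two regimes. If $|\ba| \geq C|\bb|$ for a large constant $C$ depending on $\sup_{\supp\mu}|\nabla f|$, then $|\nabla g_\bxi| \gg 1$ on a neighbourhood of $\supp\mu$. In the proof's decomposition, the localized pieces have Fourier transforms evaluated only at large frequencies outside the exceptional set, so they decay with a uniform exponent coming from Theorem~\ref{thm: AFD}. Alternatively, one can handle this regime by Lemma~\ref{lma: startingdecomp} directly, since there are no small-gradient regions. Otherwise $|\bb| \gg 1$ and $\ba/|\bb|$ ranges over a compact set. The normalized family of gradient maps $\bx \mapsto \ba/|\bb| + (\nabla f(\bx))^T \bb/|\bb|$ is then a compact family of analytic maps, each non-constant on every ball. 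Theorem~\ref{thm: Lo} gives uniform sublevel set bounds $\mu(\{|\nabla g| < \delta\}) \ll \delta^{\kappa}$, with $\kappa$ uniform over the family, at all scales of branches. Feeding these into the argument of Theorem~\ref{thm: maingeneraldomain}~\eqref{i:nonexpanding} yields a uniform $\sigma$. Only the constant depends on $\bxi$, if at all; in fact it likely ends up uniform too.
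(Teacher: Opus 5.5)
Your reduction to the scalar map $g_\bxi$ and the non-degeneracy check are correct. The gap is in the uniformity argument, specifically the regime $|\ba|\geq C|\bb|$.

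In the proof of Theorem~\ref{thm: maingeneraldomain}~\eqref{i:nonexpanding}, Lemma~\ref{lma: startingdecomp} reduces everything to $|\widehat{\mu}|$ at the rescaled frequencies $r_{\omega,n}O_{\omega,n}\bxi_{\omega,n}$. Here $\bxi_{\omega,n}=t\nabla g_\bxi(\bx_0)$, with $\bx_0$ the centre of the associated cube. Since $\mu$ need not be Rajchman, knowing these frequencies are large gives nothing. The exceptional set of Definition~\ref{d:afd} is sparse but unbounded; for a product of Cantor--Lebesgue measures it contains $3^m\mathbf{e}_1$. Nothing you say prevents all the frequencies from landing in it.

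Theorem~\ref{thm: AFD} only helps together with Claim~\ref{claim:boundnumber}, i.e.\ that few branches send their frequency into any single unit cube. Proving that claim needs bounds on $\mu(\{|\nabla g_\bxi-\by|<\delta\}^{(\delta^{\varepsilon})})$ for \emph{every} $\by\in\RR^k$; the thickening is what captures whole $2^{-n}$-branches. A large gradient does not supply this. Your alternative, Lemma~\ref{lma: startingdecomp} plus absence of small gradients, is the mechanism of Theorem~\ref{thm: maingeneraldomain}~\eqref{i:selfsimhaspoly}. It needs polynomial decay of $\widehat{\mu}$ itself, which is not assumed. The same confusion appears in your second regime, where you only bound $\mu(\{|\nabla g|<\delta\})$.

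The missing observation is that once all translates $\by$ are included, the shift by $\ba$ is absorbed into $\by$. So you never need $\ba/|\bb|$ bounded, no case split is required, and the only degenerating parameter is $|\bb|$. This is how the paper proceeds:
\begin{itemize}
\item The compactness argument of Lemma~\ref{lma: choice of D}, applied to $\{P_\bv\}_{\bv\in\bS^{d-1}}$ and using that no $P_\bv$ is constant, gives $D,C$ with the following property. At every point of $\supp(\mu)$, each $\nabla g_\bxi-\by=\ba-\by+|\bb|P_{\bb/|\bb|}$ has a Taylor coefficient of degree in $\{1,\dotsc,D\}$ of absolute value at least $C|\bb|$.
\item Proposition~\ref{p:quanifiedlo} then gives an exponent $\beta$ depending on $D$ but not on $C|\bb|$.
\item Running the proof of Lemma~\ref{lem: gen NE twisters} with this bound yields a uniform $\sigma$, with a constant depending on $|\bb|$.
\end{itemize}
Equivalently, apply Theorem~\ref{thm: Lo} to the compact family $\{P_\bv-\mathbf{z}\}$ with $\bv\in\bS^{d-1}$ and $\mathbf{z}$ bounded (larger $\mathbf{z}$ give empty sublevel sets near $\supp(\mu)$), and replace $\delta$ by $\delta/|\bb|$.

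Your remark that the constant is probably uniform too is false. Take the Cantor--Lebesgue measure, $f(x)=x^2$ and $\bxi=(a,\eta)$. Choosing $t\asymp 1/\eta$ with $ta=3^m$ gives $|\widehat{\mu_T}(t\bxi)|\geq c_0>0$, which forces $c_\bxi\gg\eta^{-\sigma}$.
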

\begin{rem}
   The $k=1$ case of Theorem~\ref{thm: lifttograph NE} was established via a different method in \cite[Proposition~1.5]{AK}, see also Remark~\ref{rem: k or d=1 lo}~\eqref{i: k=1 lo}.
\end{rem}
For possibly expanding self-similar measures, we have the following result for the lift to the graph. 
\begin{thm}\label{thm: lifttograph - expanding}
    Let $k,d \geq 1$ be integers and let $\mu$ be an affinely irreducible self-similar measure on $\RR^k$. 
    Let $U \subseteq \RR^k$ be an open set containing $\supp(\mu)$, and let $f \colon U \to \RR^d$ be an analytic map which is not the sum of a conical function and an affine function. 
    Define $T \colon U \to \RR^{k+d}$ by $\bx \mapsto (\bx,f(\bx))$. 
    Fix $\varepsilon>0$. 
    Then there exists $\sigma,c > 0$ such that for all $\bxi \in \RR^{k+d}$ of unit length, with the last $d$ coordinates of $\bxi$ having a norm at least $\varepsilon$, for all $t > 0$, 
    \[ |\widehat{\mu_T(t \bxi)}| \leq c t^{-\sigma}. \]
\end{thm}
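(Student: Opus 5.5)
The plan is to reduce Theorem~\ref{thm: lifttograph - expanding} to the machinery behind Theorem~\ref{thm: maingeneraldomain}~\eqref{i:expanding}, applied not to $f$ itself but to the uniformly controlled family of scalar phases $\{\phi_{\bxi}\}$. Write $\bxi=(\ba,\bb)$ with $\ba\in\RR^k$, $\bb\in\RR^d$, $|\bb|\geq\varepsilon$. Then
\[
\widehat{\mu_T}(t\bxi)=\int e^{-2\pi i t(\langle \ba,\bx\rangle+\langle \bb,f(\bx)\rangle)}\,d\mu(\bx),
\]
so we must bound an oscillatory integral with phase $\phi_{\bxi}(\bx)=\langle \ba,\bx\rangle+|\bb| f_{\bv}(\bx)$, where $\bv=\bb/|\bb|\in\bS^{d-1}$. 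The family $\mathcal F=\{\phi_{\bxi}: |\bxi|=1,\ |\bb|\geq\varepsilon\}$ is compact in the sense of Definition~\ref{d:compact}, being a continuous image of a compact parameter set. The hypothesis says exactly that no $\phi_{\bxi}$ is conical on any ball. If $\phi_{\bxi}$ were conical near some point, i.e.\ $|\ba+|\bb|\nabla f_{\bv}|$ were locally constant, then $f_{\bv}$ would be conical plus affine. Hence $|\bb|^{-1}\langle\bb,f\rangle$ would be the sum of a conical and an affine function, and so would $f$ in the sense intended. So the whole family is uniformly non-conical.

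The next step is to run the decomposition of Lemma~\ref{lma: startingdecomp} at scale $2^{-n}$ with $2^n\approx t^{\alpha}$ for a small $\alpha$. On each $2^{-n}$-branch $\mu_{\omega,n}$, linearise $\phi_{\bxi}$. This writes $\widehat{\mu_T}(t\bxi)$, up to an error $O(t\,2^{-2n})$, as an average over branches of $|\widehat{\mu}(t r_\omega O_\omega^{T}\nabla\phi_{\bxi}(\bx_\omega))|$. Here $\nabla\phi_{\bxi}=\ba+|\bb|P_{\bv}$. We then need the following: for most branches (in $\mu$-measure), the frequency $t r_\omega O_\omega^T\nabla\phi_{\bxi}(\bx_\omega)$ avoids the exceptional set of Theorem~\ref{thm: AFD}, where $|\widehat\mu|$ fails to decay polynomially. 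Since $O_\omega$ is arbitrary in the expanding case, we cannot control the direction. We control only the modulus $t r_\omega|\nabla\phi_{\bxi}(\bx_\omega)|$, and we use a version of Theorem~\ref{thm: AFD} saying the exceptional set is contained in a thin set of radii (few annuli). This is why non-conicality, rather than non-degeneracy, is the right hypothesis.

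The key input, and the main obstacle, is that the map $\bx\mapsto|\nabla\phi_{\bxi}(\bx)|^2$ does not concentrate near any fixed value $c$ on $\supp\mu$, uniformly in $\bxi$ and $c$. Concretely, we need
\[
\mu\bigl(\{\bx: \bigl||\nabla\phi_{\bxi}(\bx)|^2-c\bigr|<\delta\}\bigr)\ll\delta^{\kappa}
\]
with $\kappa>0$ independent of $\bxi$ and $c$. This is exactly what the uniform \L{}ojasiewicz inequality Theorem~\ref{thm: Lo} provides, applied to the compact analytic family $\{|\nabla\phi_{\bxi}|^2-c\}$. Here $c$ ranges over a bounded interval, since the gradients are bounded on a neighbourhood of $\supp\mu$. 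Affine irreducibility of $\mu$ together with non-constancy of each member on every ball are its hypotheses, and compactness is where $|\bb|\geq\varepsilon$ is used. The delicate point is checking that this family satisfies the theorem's hypotheses near $\partial U$. I would handle it by shrinking $U$ to a compact neighbourhood of $\supp\mu$ first. One also needs $|\nabla\phi_{\bxi}|$ to be bounded below away from a small-measure set, which is the case $c=0$ of the same estimate.

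Combining these gives the estimate. Branches on which $t r_\omega|\nabla\phi_{\bxi}|$ lands in an exceptional annulus have total mass $\ll t^{-\kappa'}$. All other branches contribute $\ll t^{-\sigma'}$. The linearisation error is $t^{1-2\alpha}$, which requires choosing $\alpha$ slightly above $1/2$ and balancing it against the available frequency range $t r_\omega\approx t^{1-\alpha}$. This yields $|\widehat{\mu_T}(t\bxi)|\leq c\,t^{-\sigma}$ with constants uniform over the compact parameter set.
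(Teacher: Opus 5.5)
Your proposal is correct and follows essentially the same route as the paper. Your $\nabla\phi_{\bxi}=\ba+|\bb|P_{\bv}$ is exactly the paper's $P^T_{\bv}$, and the paper derives its non-conicality from the hypothesis by the same ``conical plus affine'' argument. It then applies Theorem~\ref{thm: Lo} to the compact family $\{|P^T_{\bv}|^2-r^2\}$ restricted to $|\bv_d|\geq\varepsilon$, which is what keeps the zero function out, and reruns the annulus argument from the proof of Theorem~\ref{thm: maingeneraldomain}~\eqref{i:expanding}.

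Two details should be made explicit. First, when bounding the mass of branches whose frequency modulus lands in an exceptional annulus, group the branches by contraction ratio first; there are only polynomially many values, as with the paper's $L_n$. Without this, the union over $r_\omega\in[\rho_m2^{-n},2^{-n}]$ of the preimage intervals is not thin. Second, the domain reduction Theorem~\ref{thm: Lo} needs is to a \emph{connected} open $U$, obtained via the finite branch decomposition, not to a compact neighbourhood.
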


If the self-similar measure itself has polynomial Fourier decay, then the lift to the graph of non-degenerate analytic functions has polynomial Fourier decay in \emph{all} directions uniformly. 
\begin{thm}\label{thm: polygraph}
    Let $k,d \geq 1$ be integers and let $\mu$ be a self-similar measure on $\RR^k$ with polynomial Fourier decay. 
    Let $U \subseteq \RR^k$ be an open set containing $\supp(\mu)$, and let $f \colon U \to \RR^d$ be analytic and non-degenerate. 
    Define $T \colon U \to \RR^{k+d}$ by $\bx \mapsto (\bx,f(\bx))$. 
    Then $\mu_T$ has polynomial Fourier decay. 
\end{thm}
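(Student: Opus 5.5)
The plan is to decompose $\mu$ into small self-similar branches, linearise the phase on each branch, and use the polynomial decay of $\mu$ on each (rescaled) branch. The only bad frequencies are those where the linearised frequency nearly vanishes, and Theorem~\ref{thm: Lo} shows these carry small $\mu$-mass.

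\textbf{Setup.} Write $\bxi=(\boldsymbol\eta,\boldsymbol\zeta)\in\RR^k\times\RR^d$ with $|\bxi|$ large. The phase of $\widehat{\mu_T}(\bxi)$ is
\[
\phi(\bx)=\langle\boldsymbol\eta,\bx\rangle+\langle\boldsymbol\zeta,f(\bx)\rangle.
\]
First I note that $\mu$ is affinely irreducible. A measure with polynomial Fourier decay is Rajchman, and a Rajchman measure cannot charge an affine hyperplane: averaging $\widehat\mu$ along the normal direction recovers the mass of parallel hyperplanes, and that average tends to $0$. So the hypotheses of Theorem~\ref{thm: Lo} hold. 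We may also shrink $U$ to a bounded neighbourhood of $\supp(\mu)$ on which $f$ and its first two derivatives are bounded.

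\textbf{Branch decomposition.} Fix $r=|\bxi|^{-\alpha}$ with $\alpha\in(1/2,1)$ and decompose $\mu$ into its $r$-branches $\mu_{\omega,n}$ as in Section~\ref{ss:ifs}, so that each branch lies in a ball of radius $\ll r$ around some point $\bx_0$. This is the same kind of decomposition as in Lemma~\ref{lma: startingdecomp}. On such a branch, Taylor expansion gives
\[
\phi(\bx)=\phi(\bx_0)+\langle\boldsymbol\eta+P_{\boldsymbol\zeta}(\bx_0),\bx-\bx_0\rangle+O(|\boldsymbol\zeta|r^2),
\]
where $P_{\boldsymbol\zeta}=(\nabla f)^T\boldsymbol\zeta$. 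The error term contributes $O(|\bxi|^{1-2\alpha})$ in total. Each branch is a similar copy of $\mu$ with contraction ratio $\asymp r$. Its Fourier transform at the linearised frequency is therefore bounded by $C\bigl(r|\boldsymbol\eta+P_{\boldsymbol\zeta}(\bx_0)|\bigr)^{-\sigma_0}$, where $\sigma_0$ is the decay exponent of $\mu$; rotations preserve the norm of the frequency.

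\textbf{Case split.} If $|\boldsymbol\zeta|\le c|\boldsymbol\eta|$ with $c<(2\sup\|\nabla f\|)^{-1}$, then $|\boldsymbol\eta+P_{\boldsymbol\zeta}(\bx_0)|\ge|\boldsymbol\eta|/2\gg|\bxi|$ for every branch. In that case we get the bound $|\bxi|^{1-2\alpha}+|\bxi|^{-(1-\alpha)\sigma_0}$ directly.

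Otherwise $|\boldsymbol\zeta|\asymp|\bxi|$, and we write $\boldsymbol\zeta=|\boldsymbol\zeta|\bv$ and $\mathbf w=\boldsymbol\eta/|\boldsymbol\zeta|$. Here $\bv\in\bS^{d-1}$ and $\mathbf w$ ranges over the closed ball of radius $c^{-1}$. The linearised frequency becomes $|\boldsymbol\zeta|\,(P_\bv(\bx_0)+\mathbf w)$.

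Consider the family $\cF=\{\bx\mapsto P_\bv(\bx)+\mathbf w\}$. It is compact in the sense of Definition~\ref{d:compact}, since it is parametrised continuously by a compact set. By non-degeneracy of $f$, no member of $\cF$ vanishes identically on any ball meeting $U$. Theorem~\ref{thm: Lo}, the uniform \L{}ojasiewicz inequality for the affinely irreducible self-similar measure $\mu$, should then give $\kappa>0$ such that
\[
\mu\bigl(\{\bx:|P_\bv(\bx)+\mathbf w|<\epsilon\}^{(r)}\bigr)\ll\epsilon^{\kappa}+r^{\kappa}
\]
uniformly in $(\bv,\mathbf w)$.

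\textbf{Choosing parameters.} Take $\epsilon=|\bxi|^{-\beta}$ with $0<\beta<1-\alpha$. Branches whose centre $\bx_0$ lies in the exceptional set contribute at most their total mass, $\ll|\bxi|^{-\beta\kappa}+|\bxi|^{-\alpha\kappa}$. Every other branch contributes at most $\bigl(|\bxi|^{1-\alpha-\beta}\bigr)^{-\sigma_0}$ times its weight. Summing the three contributions gives
\[
|\widehat{\mu_T}(\bxi)|\ll|\bxi|^{1-2\alpha}+|\bxi|^{-\beta\kappa}+|\bxi|^{-\alpha\kappa}+|\bxi|^{-(1-\alpha-\beta)\sigma_0},
\]
which is polynomial decay uniform in all directions.

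\textbf{Main obstacle.} The key step is the uniform \L{}ojasiewicz bound over the whole compact family $\cF$, and in particular its validity near the boundary of the parameter range in $\mathbf w$. I will rely on Theorem~\ref{thm: Lo} for this. The point that must be checked is that non-degeneracy, meaning no $P_\bv$ is constant on any ball, is exactly the nonconstancy hypothesis that theorem needs for the shifted maps $P_\bv+\mathbf w$. It also has to hold on every ball meeting $\supp(\mu)$, not merely globally, which is why the local definition of non-degeneracy is the right one.
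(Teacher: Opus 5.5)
Your proposal is correct and follows the paper's proof. The paper applies Theorem~\ref{thm: Lo} to the compact family $\bx\mapsto\bv_k+P_{\bv_d}(\bx)$, $(\bv_k,\bv_d)\in\bS^{k+d-1}$ (nonzero constants when $\bv_d=\mathbf 0$), and then runs your branch-splitting argument from the proof of Theorem~\ref{thm: maingeneraldomain}~\eqref{i:selfsimhaspoly}, with your $\alpha,\beta$ playing the roles of $1/\gamma,\tau/\gamma$ there. Your case split $|\boldsymbol\zeta|\le c|\boldsymbol\eta|$ with normalisation by $|\boldsymbol\zeta|$ is just a reparametrisation of that family, your Rajchman argument makes explicit the affine irreducibility the paper leaves implicit, and the only formality you omit is the paper's preliminary reduction to connected $U$, which Theorem~\ref{thm: Lo} requires.
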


\subsection{Deducing Theorem \ref{thm: main}}\label{ss:deduceheadlinefrommain}
Theorem~\ref{thm: main} is a direct consequence of Theorems~\ref{thm: maingeneraldomain}~\eqref{i:expanding} and~\eqref{i:selfsimhaspoly}. 
\begin{proof}[Proof of Theorem \ref{thm: main} using Theorem \ref{thm: maingeneraldomain}]
    Let $H$ be the smallest affine subspace containing the support of $\mu$. 
    We restrict $f$ to $H$ and regard $H$ as the ambient space; then we can assume that $\mu$ is affinely irreducible. From Theorem~\ref{thm: maingeneraldomain}~\eqref{i:expanding} we can deduce the desired Fourier decay property unless $f$ is conical. 
    Since $f$ is globally real-analytic, recall from Proposition~\ref{prop: globaldegcon} that conicality is equivalent to degeneracy, which by Proposition~\ref{prop: degequivalences} is equivalent to partial linearity. 
    This finishes the proof of~\eqref{i:degenerate}. 
    Now~\eqref{i:nopoly} follows immediately from Theorem~\ref{thm: maingeneraldomain}~\eqref{i:selfsimhaspoly}. 
\end{proof}

The proof of Theorem~\ref{thm: maingeneraldomain} is the most technical part of this paper. 
We first illustrate the ingredients which are of independent interest.

\subsection{A uniform measure \L ojasiewicz inequality}\label{ss:lojafirst}
Our next result, Theorem~\ref{thm: Lo}, is a non-concentration result which generalises a classical result of \L{}ojasiewicz~\cite{Lojasiewicz} which is well-known in real algebraic/analytic geometry~\cite{Colding}. 
Being unable to find an existing version of the \L{}ojasiewicz inequality that suits our needs, we will provide in this paper a standalone proof of the following result. Such a result may have other applications in future. 
Recently, in a paper on Diophantine approximation, B\'enard, He and Zhang used related results \cite[Theorem~4.4 and Corollary~4.5]{BHZ26} (for polynomials) to prove a Khintchine-type dichotomy for self-similar measures in $\mathbb{R}^k$. 
\begin{thm}\label{thm: Lo}
    Let $\cA$ be a compact family of real analytic functions from some connected open $U\subset\mathbb{R}^k$ to $\RR^d$. 
    Assume that $\cA$ does not contain the zero function. 
    Let $\mu$ be an affinely irreducible self-similar measure on $\mathbb{R}^k$ supported inside $U$. 
    Then for all $\varepsilon>0$, there are numbers $c,\beta>0$ such that for all $f\in \cA$, $\delta>0$,
    \[
    \mu(\{|f|<\delta\}^{(\delta^{\varepsilon})}) \leq c \delta^\beta, 
    \]
    where we recall $\{|f|<\delta\}^{(\delta^{\varepsilon})}$ denotes the $\delta^{\varepsilon}$-neighbourhood of $\{ \bx \in \RR^k : |f(\bx)|<\delta\}$. 
\end{thm}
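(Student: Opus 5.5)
We will first reduce to scalar functions. Replace each $f\in\cA$ by $g=|f|^2=\sum_j f_j^2$. This is real analytic, and $\{|f|<\delta\}=\{g<\delta^2\}$. The family $\{|f|^2:f\in\cA\}$ is again compact and does not contain $0$. So it suffices to treat a compact family $\cA$ of nonzero real-analytic functions $U\to\RR$, at the cost of adjusting $\varepsilon$ and $\beta$.

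Next we obtain uniform local non-degeneracy from compactness. Since $U$ is connected and $g\not\equiv0$, at every $\bx\in U$ some partial derivative of $g$ is nonzero. By compactness of $\cA$ and of $K=\supp(\mu)$, together with lower semicontinuity of the vanishing order, there are $m\in\NN$ and $a,r_0>0$ with the following property: for every $g\in\cA$ and $\bx\in K^{(r_0)}$ there is a multi-index $\alpha$ with $|\alpha|\leq m$ and $|\partial^\alpha g(\bx)|\geq a$. Consequently, for every $\bx\in K$ and $r\leq r_0$, the rescaled function
\[ h_{\bx,r}(\by)=g(\bx+r\by)/\|g(\bx+r\,\cdot)\|_{C^0(B(0,2))} \]
lies in a compact family $\mathcal{H}$ of analytic functions on $B(0,2)$. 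The normalising factor is $\gg r^m$. Every member of $\mathcal{H}$ has sup norm $1$, and limits as $r\to0$ are nonzero polynomials of degree $\leq m$.

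The core single-scale step is to show that for every $\eta$ there is $\kappa(\eta)\to0$ as $\eta\to 0$ such that
\[ \nu(\{|h|<\eta\}^{(\eta)})\leq \kappa(\eta) \]
uniformly over $h\in\mathcal{H}$ and over $\nu$ a normalised rescaled $2^{-n}$-branch of $\mu$. Each such $\nu$ is a similar copy of $\mu$ under a similarity of bounded distortion, so these branches form a compact family of affinely irreducible measures. By compactness it is enough to prove $\nu(Z(h))=0$ for each single nonzero analytic $h$ and each such $\nu$. For this we argue by induction on the vanishing order. Suppose $\nu(Z(h))>0$. Take a $\nu$-density point of $Z(h)$ and blow up at it, zooming via branches. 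The rescaled functions converge to a nonzero homogeneous polynomial $P$, and the rescaled branches converge to a similar copy of $\mu$ giving positive mass to $Z(P)$. Repeating the blow-up at a density point of $Z(P)$ away from the origin lowers the degree, because $P$ is constant in the radial direction up to scaling, which effectively removes a variable. Ultimately this forces positive mass on $Z(L)$ for an affine $L$, that is on an affine hyperplane, contradicting affine irreducibility. I expect this step — that affinely irreducible self-similar measures give zero mass to analytic zero sets, with uniformity — to be the main obstacle, and the blow-up and induction above is the first thing I would try.

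Finally we iterate over scales. Fix $\delta$. Choose scales $2^{-n_1}>2^{-n_2}>\dots>2^{-n_J}$ with $2^{-n_J}\geq\delta^{\varepsilon}$ and ratio between consecutive scales a fixed large constant $2^{-L}$, so that $J\asymp_\varepsilon \log(1/\delta)$. Decompose $\mu$ into $2^{-n_j}$-branches. On a branch, the condition of lying in $\{|g|<\delta\}^{(\delta^\varepsilon)}$ forces the rescaled function at the next scale to be within $\eta$ of its zero set, up to the normalising factor $\gg r^m$. Applying the single-scale estimate to each branch shows that passing to each finer scale retains at most a proportion $\kappa<1$ of the mass. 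Multiplying over the $J$ scales gives
\[ \mu(\{|g|<\delta\}^{(\delta^\varepsilon)})\leq \kappa^{J}\ll\delta^{\beta}. \]
The delicate point is the normalisation. The relative size of $g$ on nested balls can only drop by a factor $\gg 2^{-Lm}$ per scale, which we will use to show the condition $|g|<\delta$ is still ``small relative to the local sup'' at all scales above $\delta^{\varepsilon}$, once $\delta$ is small in terms of $\varepsilon,m,L$.
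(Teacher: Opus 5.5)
Your proposal is correct in outline. It differs from the paper at the one point that matters: why a fixed proportion of mass is lost at each scale. The scalar reduction, the uniform bound on the vanishing order (this is Lemma~\ref{lma: choice of D}) and the final iteration over nested branches are essentially as in the paper.

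For the single-scale step, the paper argues by interpolation. Applying Lemma~\ref{lma: prod of SS} to the product $\mu^L$ and the generalised Vandermonde variety $V_{k,D}$, it fixes sub-branches $S_1,\dotsc,S_L$ on which interpolation by polynomials of degree at most $D$ is uniformly well conditioned (Lemma~\ref{l:onebranchaway}). Cramer's rule then shows that if every $\eta\rho$-sub-branch of a $\rho$-branch met $\{|f|<\delta\}$, all Taylor coefficients of degree at most $D$ would be small, contradicting Lemma~\ref{lma: choice of D}.

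You instead obtain $\kappa(\eta)\to 0$ by compactness of the normalised rescalings. This reduces everything to the qualitative fact that an affinely irreducible self-similar measure does not charge the zero set of one nonzero analytic function. You prove that fact by iterated blow-ups, rather than by Whitney's decomposition into submanifolds and the local flatness argument of Lemma~\ref{lma: prod of SS}.

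What each route buys:
\begin{itemize}
\item Yours uses a single copy of $\mu$: no product measures, no determinant variety, no Whitney. Since your single-scale bound is stated for $\eta$-neighbourhoods of sublevel sets, it also handles the $\delta^{\varepsilon}$-thickening directly.
\item The paper's constants depend only on $k,D,\mu$, not on the family. So Proposition~\ref{p:quanifiedlo} gives $\beta$ depending only on $k,d,D,\mu,U,\varepsilon$, uniformly over all families with the same $D$ and any coefficient bound $C$. This is what the proof of Theorem~\ref{thm: lifttograph NE} needs.
\item Your constants come from a contradiction argument over the family. You could recover the same uniformity by noting that the small-scale limits of your rescalings are normalised polynomials of degree at most $m$.
\end{itemize}

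A few details need tightening.
\begin{itemize}
\item \emph{Density points.} With overlaps, ball-density points do not directly give branches carrying almost all their mass on $Z(h)$, so work on the coding space. If $\pi$ is the coding map and $\phi_{\omega|_n}$ the composed similarity, martingale convergence gives $\mu(\phi_{\omega|_n}^{-1}(Z(h)))\to 1$ for almost every $\omega$ with $\pi(\omega)\in Z(h)$. The normalised limit $P$ of $h\circ\phi_{\omega|_n}$ then satisfies $\mu(Z(P))=1$, so at later stages every cylinder carries full relative mass.
\item \emph{Choice of blow-up centre.} Blow up $P$, homogeneous of degree $j$ about $\by^*$, at $\mathbf{z}\in Z(P)$. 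The degree fails to drop exactly when $P$ is invariant under translation by $\mathbf{z}-\by^*$. So the new centre must avoid $\by^*+W$, where $W$ is the space of translation invariances of $P$; avoiding the vertex alone is not enough. For instance, $P(\bx)=x_1x_2$ on $\RR^3$ blown up at $(0,0,1)$ returns $P$. Avoiding $\by^*+W$ is possible because it is a proper affine subspace, hence $\mu$-null by affine irreducibility.
\item \emph{Range of $\varepsilon$.} The quantity $\delta r^{-m}$ is small at all scales $r\geq\delta^{\varepsilon}$ only when $\varepsilon<1/m$; taking $\delta$ small does not suffice otherwise. Since the estimate for a smaller $\varepsilon$ implies it for a larger one, reduce to that case first.
\item \emph{Finest scale.} Stop the scales at $\asymp\eta^{-1}\delta^{\varepsilon}$, so that the thickening is at most $\eta$ after rescaling.
\item \emph{Normality of the rescaled family.} This requires a uniform radius of convergence for $\cA$. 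The paper relies on the same implicitly, in Lemma~\ref{lma: choice of D} and Proposition~\ref{p:quanifiedlo}.
\end{itemize}
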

\begin{rem}\label{rem: k or d=1 lo}
\begin{enumerate}
\item\label{i: k=1 lo} If $k=1$, then we have one variable real analytic functions. In this case, standard complex analysis can be used to establish this theorem. 
Namely, for $k=1$, Theorems~\ref{thm: maingeneraldomain}, \ref{thm: lifttograph NE} and~\ref{thm: lifttograph - expanding} can be proved via a simpler method. 
\item Even in the $d=1$ case, our proof of Theorems~\ref{thm: main} and~\ref{thm: maingeneraldomain} contain substantial additional ingredients compared to the $d=1$ results in~\cite{BY-quantitative}. This is because in~\cite{BY-quantitative} we assumed that the graph of the pushforward function has nonvanishing Gaussian curvature, which means that the zero sets of the functions in $\mathcal{A}$ in our application of Theorem~\ref{thm: Lo} consist of isolated points, so the proof of Theorem~\ref{thm: Lo} simplifies substantially. 
\end{enumerate}
\end{rem}
We deduce Theorem~\ref{thm: Lo} from a more quantitative statement, Proposition~\ref{p:quanifiedlo}, which is also used to prove Theorem~\ref{thm: lifttograph NE}. 
The requirement that $\cA$ be an analytic compact family on a compact set $K$ allows the consideration of analytic functions that are only defined around $K$ rather than globally. 
The $\delta^{\varepsilon}$ neighbourhood in the statement of Theorem~\ref{thm: Lo} will be convenient in the proof of Theorem~\ref{thm: maingeneraldomain}. 

Suppose $\cA$ is a finite collection and $\mu$ is the Lebesgue measure on any bounded open set. Then the above result can be deduced from the classical  \L ojasiewicz inequality which states that for some $\beta>0$ and all small enough $\delta>0$,
\[
\{|f|<\delta \}\subset \{f=0\}^{(\delta^{\beta})}.
\]
Since $f$ is not constant, $\{f=0\}$ is a proper analytic variety. Thus it is not difficult to show that $\mu(\{f=0\}^{({\delta}^\beta)})\ll {\delta}^{\beta'}$ for some $\beta'>0$. 
This shows Theorem~\ref{thm: Lo} in this specific case. In fact, it is not difficult to extend this argument to all affinely irreducible self-similar measures rather than the Lebesgue measure. 
The point of Theorem~\ref{thm: Lo} is that the collection of functions may not be finite. 
Therefore, some further steps are needed to achieve uniformity. 

\subsection{Fourier decay outside sparse frequencies}\label{ss:afdfirst}
We will also need the fact that affinely irreducible self-similar measures have Fourier decay outside of a very sparse set of frequencies. 
More precisely, consider the following property, which can be shown to be equivalent to an \emph{$L^2$-flattening} property (increase in $L^2$ dimension of iterated convolutions, as in \cite[(1.3)]{AK} for instance). 
\begin{defn}\label{d:afd}
Let $\mu$ be a probability measure on $\mathbb{R}^k$. We say that $\mu$ (or $\widehat{\mu}$) has \emph{Fourier decay outside sparse frequencies} if for each $\epsilon>0$ there exist $\delta,C>0$ so that
\[
|\{\bxi \in \RR^k:|\widehat{\mu}(\bxi)|\geq R^{-\delta}, |\bxi|<R\}| \leq C R^\epsilon
\]
for all $R > 1$, where here $|\cdot |$ denotes Lebesgue measure on $\RR^k$. 
\end{defn}
All measures with polynomial Fourier decay have Fourier decay outside sparse frequencies, but so do many non-Rajchman measures such as the Cantor--Lebesgue measure. 
The following result holds. 
\begin{thm}\label{thm: AFD}
A self-similar measure on $\RR^k$ has Fourier decay outside sparse frequencies if and only if it is affinely irreducible. 
\end{thm}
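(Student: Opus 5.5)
The plan has two directions.

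\emph{Necessity.} Suppose $\mu$ gives positive mass to some affine hyperplane. For a self-similar measure, the support then lies inside an affine hyperplane $H=\{\bx:\langle \bx,\mathbf{e}\rangle=t\}$. The reason is that if $\mu(H)>0$, a standard dichotomy for self-similar measures together with taking a hyperplane of positive mass with minimal dimension forces $\supp\mu\subseteq H$. The cleanest route is to pick, among affine subspaces of positive mass, one $W$ of minimal dimension. By self-similarity $\mu(W)=\sum_i p_i\,\mu(f_i^{-1}W)$, and a maximality-of-mass argument over the finitely many "images" shows that $\mu$ is carried by finitely many translates, and then by $W$ itself.

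Once $\supp\mu\subseteq H$, we have $|\widehat{\mu}(s\mathbf{e}+\boldsymbol{\eta})|=|\widehat{\mu}(\boldsymbol{\eta})|$ for all $s$. The set $\{|\widehat\mu|\ge 1/2\}$ contains a neighbourhood of $0$, so it contains a slab of width $\asymp 1$ in the $\boldsymbol\eta$ directions and length $R$ along $\mathbf{e}$. This slab has Lebesgue measure $\gg R$, contradicting Definition~\ref{d:afd} with $\epsilon<1$.

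\emph{Sufficiency.} This is the substantive direction, and it follows Khalil~\cite[Section~6]{Khalil} as indicated. The idea is to prove an $L^2$-flattening statement. Let $\mu_n$ be the discretisation of $\mu$ at scale $2^{-n}$ via the branch decomposition $\{\Lambda_{D_n}\}$ of Section~\ref{ss:ifs}. The step is to show that for every $\epsilon>0$ there are $m$ and $\delta>0$ such that
\[
\|\widehat\mu\,\mathbbm{1}_{B(0,R)}\|^{2m}_{L^{2m}} \ll R^{k-\dots}
\]
or, equivalently, that the $m$-fold self-convolution $\mu^{*m}$ at scale $1/R$ has $L^2$-norm at most $R^{\epsilon}$ times that of Lebesgue measure, up to $R^{-\delta}$ losses. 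A Chebyshev/Plancherel argument then converts this directly into the bound on the measure of the set $\{|\bxi|<R: |\widehat\mu(\bxi)|\ge R^{-\delta}\}$.

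To obtain flattening, the plan is to apply Khalil's theorem. Khalil's hypothesis is that $\mu$ is uniformly affinely non-concentrated: there are $C,\alpha>0$ such that $\mu(W^{(\rho r)}\cap B(\bx,r))\le C\rho^\alpha\mu(B(\bx,r))$ for all affine hyperplanes $W$, all $\bx\in\supp\mu$, and all $r,\rho\in(0,1]$. I would verify this from affine irreducibility through self-similarity. Affine irreducibility and compactness of the space of hyperplanes meeting the unit ball give, for some $\rho_0$, the bound $\mu(W^{(\rho_0)})\le 1-\eta$ for every hyperplane $W$. Indeed $\mu(W^{(\rho)})\to\mu(W)=0$ uniformly by a compactness and upper semicontinuity argument. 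Rescaling by the branches in a ball $B(\bx,r)$ transfers this to every scale, because similarities map hyperplanes to hyperplanes. Iterating over scales $\rho_0^j$ then gives $(1-\eta)^j$ decay, i.e.\ a power $\rho^\alpha$. Khalil's result then yields the flattening with Fourier decay outside sparse frequencies.

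The main obstacle is the multiscale step in the sufficiency direction. Without separation conditions, a ball $B(\bx,r)$ receives overlapping branches of different sizes. The fix is to use the decomposition into $2^{-n}$-branches with bounded distortion $\rho_m$, and to note that each branch individually satisfies the non-concentration bound, which is preserved under averaging. Balls that are only partially covered by branches, i.e.\ the comparison with $\mu(B(\bx,r))$, need a doubling-type substitute. I would replace balls by the $3D_n$ neighbourhoods, which is the form Khalil's argument actually needs.
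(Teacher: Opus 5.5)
Your necessity direction is fine. The slab argument is exactly the easy implication. Your minimal-dimension sketch of why $\mu(H)>0$ forces $\supp\mu$ into a hyperplane only needs a last step: each $f_i$ permutes the finitely many subspaces of maximal mass, so the fixed point of $f_i$ lies in all of them, and if there were two, their intersection would be a lower-dimensional closed invariant set containing $\supp\mu$. Your sufficiency strategy is also the paper's: a compactness bound as in Lemma~\ref{l:tube}, propagated to all scales through branches, then Khalil.

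The gap is in the hypothesis you propose to verify. Khalil's same-ball condition $\mu(W^{(\rho r)}\cap B(\bx,r))\leq C\rho^{\alpha}\mu(B(\bx,r))$ is false for some affinely irreducible self-similar measures, even with the open set condition. Take $\{x/2,\,x/2+1/2\}$ with weights $p<q$, and set $x=1/2+2^{-n}$, $r=2^{-n}+2^{-n-j}$. Then $B(x,r)$ contains the interval $(1/2-2^{-n-j},1/2)$ of mass $pq^{n+j-1}$, while the rest of $B(x,r)$ has mass $O_{p,q}(qp^{n-2})$. Letting $n\to\infty$, a $2^{-j}r$-neighbourhood of a single point carries almost all of $\mu(B(x,r))$, for every $j$.

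Your rescaling argument cannot prevent this. It controls each branch meeting $B(\bx,r)$ relative to that branch's own mass, and the total mass of those branches is bounded by $\mu(B(\bx,cr))$, not by $\mu(B(\bx,r))$. You sensed this (``doubling-type substitute''). However, replacing balls by $3D_n$ in a same-region condition is exposed to the same non-doubling effect. And ``the form Khalil's argument actually needs'' is an assertion rather than a step.

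What closes the gap is to enlarge the region on the right-hand side only. The paper's inner-affine non-concentration (Definition~\ref{d:inner}) asks for $\mu(W^{(\varepsilon r)}\cap B(\bx,r))\leq\phi(\varepsilon)\mu(B(\bx,cr))$. This is precisely what your iteration yields (Theorem~\ref{thm: selfsiminner}), taking $r_n=(c-1)r(\gamma r_{\min}/4)^{n-1}$:
\begin{itemize}
\item branches of diameter at most $r_n/2$ meeting $W^{(r_{n+1})}\cap B(\bx,r+r_{n+1})$ lie inside $W^{(r_n)}\cap B(\bx,r+r_n)$;
\item each such branch puts at most a $(1-\delta)$-fraction of its mass in $W^{(r_{n+1})}$, by Lemma~\ref{l:tube};
\item the induction starts from $\mu(B(\bx,cr))$.
\end{itemize}
You then need a separate input, taken from Khalil's remarks and recorded as Theorem~\ref{thm: innerafd}: his proof of decay outside sparse frequencies goes through under this weaker hypothesis. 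With that reformulation and citation your plan becomes the paper's proof. As written, the sufficiency direction rests on a condition that fails in general.
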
 

There has been a significant amount of prior work showing that classes of fractal measures have decay outside sparse frequencies. 
It was verified for Bernoulli convolutions by Kaufman~\cite{K82} using an Erd\H{o}s--Kahane argument, for self-similar measures in the line (this is the $k=1$ case of Theorem~\ref{thm: AFD}) by Tsujii~\cite{Tsujii}, certain homogeneous self-similar measures in arbitrary dimensions by Mosquera and Olivo \cite[Proposition~2.2 and Section~5]{MosqueraOlivo}, and certain infinitely generated self-similar measures by Baker and Banaji \cite[Corollary~4.11]{BB25}. One could attempt to quantify the dependence between $\delta$ and $\varepsilon$ as done by Mosquera and Shmerkin in~\cite{MS18}, but that is beyond the scope of this paper. 

In Corollary~1.7 of the recent breakthrough paper~\cite{Khalil}, motivated by proving exponential mixing of geodesic flows, Khalil proved that every measure (not necessarily self-similar) which satisfies a (local) uniform affine non-concentration condition has Fourier decay outside sparse frequencies. 
The forward implication of Theorem~\ref{thm: AFD} is straightforward. 
To establish the backward implication for (possibly overlapping) self-similar measures we introduce a weaker non-concentration condition which we call inner-affine non-concentration (see Definition~\ref{d:inner}) under which Khalil's proof of decay outside sparse frequencies also goes through, and prove that affinely irreducible self-similar measures satisfy this condition (see Theorem~\ref{thm: selfsiminner}). 

As well as its use in the proof of Theorem~\ref{thm: maingeneraldomain} under assumptions~\eqref{i:expanding} and~\eqref{i:nonexpanding}, Theorem~\ref{thm: AFD} is also used in an important way in the proof of \cite[Theorem~1.5]{BakerKhalilSahlsten}, which gives polynomial Fourier decay for certain inhomogeneous self-similar measures. 

\subsection{Further applications}
    In addition to Theorem~\ref{thm: analyticconformal} on self-conformal measures, there are several further applications of polynomial Fourier decay which can be deduced immediately by combining results from the literature with results in this paper. We briefly describe a couple of these. 
    \begin{enumerate}
    \item \emph{Normality and effective equidistribution:} We say that a point $\bx \in \RR^d$ is \emph{normal} if for every expanding integer matrix $A\in M_{d\times d}(\mathbb{Z})$, $(A^n \bx)_{n=1}^{\infty}$ is equidistributed in the torus $\RR^d / \ZZ^d$ when reduced mod $1$. 
    If the Fourier transform of a probability measure $\mu$ decays fast enough (polynomial decay suffices) then $\mu$-a.e. point is normal; see \cite[Theorem~A.1]{BakerKhalilSahlsten} (attributed to Fraser and Sahlsten). The $d=1$ case was proved earlier in~\cite{DEL,PVZZ}, and in this case one can deduce stronger quantitative equidistribution results, see for instance \cite[Theorems~1 and~3]{PVZZ}. 
    One can use \cite[Theorem~A.1]{BakerKhalilSahlsten} to deduce, for example, that if $\mu$ is Cantor--Lebesgue measure then $(x^2,x^3,x^4,\dotsc,x^{d+1})\in\mathbb{R}^d$ is normal for $\mu$-a.e. $x$. 
    \item \emph{Fourier restriction on fractals:} One can deduce Fourier restriction estimates for measures which have polynomial Fourier decay, such as those from Theorems~\ref{thm: main} and~\ref{thm: analyticconformal}. Indeed, from work such as \cite[Theorem~4.1]{Mockenhaupt}, \cite[Corollary~3.1]{Mitsis} and \cite[page~353]{Ste1993} it is known that if a measure $\mu$ on $\RR^k$ has polynomial Fourier decay then there is some $p_\mu > 1$ such that for all $p \in [1,p_\mu]$ the Fourier transform can be thought of as a bounded linear operator $L^p(\RR^k,\mbox{Lebesgue}) \to L^2(\mbox{supp}(\mu),\mu)$. 
    \end{enumerate}

\section{Proving Theorems \ref{thm: maingeneraldomain}, \ref{thm: lifttograph NE}, \ref{thm: lifttograph - expanding}, \ref{thm: polygraph} assuming Theorems \ref{thm: Lo}, \ref{thm: AFD}}\label{s:proofassumingLoandAFD}

\subsection{Preliminaries}
We first reduce the results from Section~\ref{ss:subset} to the case where the domain $U$ of the analytic maps is connected, so we can apply Theorem~\ref{thm: Lo}. 
\begin{lma}
    If Theorem~\ref{thm: maingeneraldomain}, \ref{thm: lifttograph NE}, \ref{thm: lifttograph - expanding} or~\ref{thm: polygraph} is true under the additional assumption that $U$ is connected, then it is true in general. 
\end{lma}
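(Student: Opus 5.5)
The plan is to replace the possibly disconnected domain $U$ by a connected open neighbourhood of $\supp(\mu)$, using the fact that $\mu$ is self-similar to pass to small pieces of $\mu$. Since $\supp(\mu)$ is compact and $U$ is open, there is $r>0$ with $\supp(\mu)^{(2r)}\subseteq U$. Choose $n$ so large that every $2^{-n}$-branch $\mu_{\omega,n}$ has support of diameter less than $r$. For each branch, the ball $B_\omega$ of radius $r$ around a point of $\supp(\mu_{\omega,n})$ is connected, is contained in $U$, and contains $\supp(\mu_{\omega,n})$. By the decomposition of Section~\ref{ss:ifs}, $\mu$ is a finite weighted sum of the $\mu_{\omega,n}$, and $\widehat{\mu_f}$ (resp.\ $\widehat{\mu_T}$) is the corresponding weighted sum of $\widehat{(\mu_{\omega,n})_f}$ (resp.\ $\widehat{(\mu_{\omega,n})_T}$). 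Polynomial decay of a finite sum follows from decay of each term, taking the minimal exponent and summing the constants; the same holds for the directional statements in Theorems~\ref{thm: lifttograph NE} and~\ref{thm: lifttograph - expanding}, keeping uniformity over the $\bxi$-range in the latter. So it suffices to treat each piece separately.

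Each piece $\mu_{\omega,n}$ equals $(g_\omega)_*\mu$ for a similarity $g_\omega(\bx)=rO\bx+\bt$ given by a finite composition of IFS maps. Hence $\mu_{\omega,n}$ is itself a self-similar measure, generated by the conjugated IFS $\{g_\omega\circ f_i\circ g_\omega^{-1}\}$, and it is supported in the connected open set $B_\omega$. I then check that every hypothesis on $\mu$ transfers to the piece:
\begin{itemize}
\item affine irreducibility is preserved by the bijective affine map $g_\omega$;
\item non-expansion is preserved, since the linear parts are conjugated by the fixed rotation $O$, so the word counts $\#F_n$ are unchanged;
\item polynomial Fourier decay is preserved, since $\widehat{(g_\omega)_*\mu}(\bxi)=e^{-2\pi i\langle\bxi,\bt\rangle}\widehat{\mu}(rO^T\bxi)$.
\end{itemize}

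Next I check that the hypotheses on $f$ survive restriction to $B_\omega$. Non-conical, non-degenerate and non-trapped are all defined by quantifying over all balls $B$ and looking at $B\cap U$. If $B\cap B_\omega$ is non-empty it contains a smaller ball $B'$, and $B'\cap U=B'$, so any constancy or trapping on $B\cap B_\omega$ would also hold on $B'\cap U$. This contradicts the hypothesis on $f$.

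The condition in Theorem~\ref{thm: lifttograph - expanding}, that $f$ is not the sum of a conical function and an affine function, should be read locally in the same way, so it also restricts. This is the one step I expect to need care: I must confirm that the intended meaning of that condition is ball-local like the others, so that it passes to $B_\omega$.

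With these checks in place, applying the connected case to each pair $(\mu_{\omega,n}, f|_{B_\omega})$ and summing the finitely many estimates gives the theorem in general.
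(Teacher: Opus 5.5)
Your proposal is correct and follows essentially the paper's route: write $\mu$ as a finite weighted sum of small branches, each a self-similar measure supported in a connected open subset of $U$, apply the connected case to each branch, and sum the finitely many estimates. The paper places the branches in connected components of $U$ where you use small balls $B_\omega$, and you also verify the hypothesis transfers that the paper leaves implicit; your ball-local reading of ``conical plus affine'' is the right one, since conicality is the negation of the ball-locally defined non-conicality.
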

\begin{proof}
    Fix $\delta$ small enough that the closed $\delta$-ball around any point in the compact set $\supp(\mu)$ is contained in $U$. 
    Note that each $\delta$-branch lies in one connected component of $U$. 
    Then considering $\mu$ as a finite weighted sum of its $\delta$-branches, we see that the pushforward measure is a finite weighted sum of pushforwards of scaled copies of $\mu$, each lying in one connected component of $\mu$. 
    Therefore if one of the theorems holds for connected domains, then the pushforward measure is a finite sum of measures, each with the desired polynomial Fourier decay, so the corresponding theorem is true for $\mu$ itself. 
\end{proof}

 Henceforth we consider a self-similar measure $\mu$ and a smooth map $f \colon U \to \RR^d$ where $U \supset \supp(\mu)$ is open and connected. 
 Write $\Gamma_f \coloneqq \{ (\bx,f(\bx)):\bx\in U \} \subset \RR^{k+d}$ for the graph of $f$. Given $\mu_{\omega,n} \sim D_n \subset U$ and $\bxi \in \RR^{k+d}$ with first $k$ coordinates $0$, let $\bxi_{\omega,n} \in \RR^k$ be such that $(\bxi_{\omega,n} ,\bzero)$ is the affine projection of $\bxi$ to $\mathbb{R}^k\times\bzero \subset \RR^{k+d}$ along the direction that is orthogonal to $T_{\omega,n}$, i.e. 
\begin{equation}\label{e:definexiomegan}
((\bxi_{\omega,n},\bzero)-\bxi) \perp T_{\omega,n}.
\end{equation}
We first use the following result from our previous paper. 

\begin{lma}\label{lma: startingdecomp}[Lemmas~3.2 and~3.3 from \cite{BY-quantitative}]
    Let $\mu$ be a self-similar measure on $\RR^k$, let $f \colon \RR^k \to \RR^d$ be $C^2$, and let $T(\bx) = (\bx,f(\bx))$. 
    Then for sufficiently large $n \in \NN$ and $\bxi \in \RR^{k+d}$ with first $k$ coordinates being $0$, 
    \[ |\widehat{\mu_f}(\xi_{k+1},\dotsc,\xi_{k+d})| = |\widehat{\mu_T}(\bxi)| \leq \sum_{D_n}\sum_{\mu_{\omega,n}\sim D_n} \left|\widehat{\mu}_{\omega,n}(\bxi_{\omega,n})\right|+O_{\mu,f}(|\bxi|/2^{2n}). \]
\end{lma}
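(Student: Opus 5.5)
The idea is to linearise $f$ on each small branch, so that the phase $\langle \bxi, T(\bx)\rangle$ becomes an affine function of $\bx$ on each $2^{-n}$-branch. The affine part then reproduces the Fourier transform of the branch at the projected frequency $\bxi_{\omega,n}$, and the quadratic remainder produces the error term.

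\emph{Step 1 (the identity).} Since the first $k$ coordinates of $\bxi$ vanish, $\langle \bxi, T(\bx)\rangle = \langle (\xi_{k+1},\dotsc,\xi_{k+d}), f(\bx)\rangle$. This gives $\widehat{\mu_T}(\bxi)=\widehat{\mu_f}(\xi_{k+1},\dotsc,\xi_{k+d})$ directly.

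\emph{Step 2 (decomposition).} Use the disintegration $\mu=\sum_{D_n}\sum_{\mu_{\omega,n}\sim D_n}\mu_{\omega,n}$ from Section~\ref{ss:ifs}, where each $\mu_{\omega,n}$ is supported in $3D_n$. This reduces the claim to bounding each term $\int e^{-2\pi i\langle\bxi,T(\bx)\rangle}\,d\mu_{\omega,n}(\bx)$ by the triangle inequality.

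\emph{Step 3 (linearisation on a branch).} I read $T_{\omega,n}$ as the tangent plane of $\Gamma_f$ at $T(\bx_0)$, where $\bx_0$ is a fixed point of $3D_n$, say the centre of $D_n$. On $3D_n$, Taylor's theorem for the $C^2$ map $f$ gives
\[ T(\bx)=T(\bx_0)+(\bx-\bx_0,\ \nabla f(\bx_0)(\bx-\bx_0))+O(|\bx-\bx_0|^2). \]
The implicit constant is uniform, since $\supp(\mu)$ is compact and the second derivatives of $f$ are bounded on a neighbourhood of it.

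The linear part is a vector $\bv=(\bx-\bx_0,\nabla f(\bx_0)(\bx-\bx_0))$ lying in $T_{\omega,n}$ (translated to the origin). By \eqref{e:definexiomegan}, $\langle\bxi,\bv\rangle=\langle(\bxi_{\omega,n},\bzero),\bv\rangle=\langle\bxi_{\omega,n},\bx-\bx_0\rangle$. Concretely, $\bxi_{\omega,n}=\nabla f(\bx_0)^T(\xi_{k+1},\dotsc,\xi_{k+d})$; this vector is bounded by $|\bxi|$ times a constant, though that bound is not needed here.

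Hence on $\supp(\mu_{\omega,n})$ the phase equals a constant, plus $\langle\bxi_{\omega,n},\bx\rangle$, plus an error of size $O(|\bxi|2^{-2n})$.

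\emph{Step 4 (summing).} For real $a$ we have $|e^{ia}-1|\le|a|$. So each branch integral differs from $e^{ic}\,\widehat{\mu}_{\omega,n}(\bxi_{\omega,n})$ by at most $O(|\bxi|2^{-2n})\,|\mu_{\omega,n}|$, where $e^{ic}$ is a unimodular constant. Taking absolute values removes the constant. The total masses $|\mu_{\omega,n}|$ sum to $1$, so the errors add up to $O_{\mu,f}(|\bxi|/2^{2n})$. "Sufficiently large $n$" is only used so that $3D_n$ lies inside the domain where the Taylor estimate holds.

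The main obstacle is bookkeeping rather than substance. One must check that the branch supports have diameter $\ll 2^{-n}$ uniformly; this holds because the covering uses copies of diameter at most $2^{-n}$, all contained in $3D_n$. One must also match the paper's exact definition of $T_{\omega,n}$, in particular where the tangent plane is based, so that the identity $\langle\bxi,\bv\rangle=\langle\bxi_{\omega,n},\bx-\bx_0\rangle$ holds exactly. Everything else is routine.
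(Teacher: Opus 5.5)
Your proof is correct and follows essentially the same linearisation argument as the cited Lemmas~3.2 and~3.3 of \cite{BY-quantitative}. On each $2^{-n}$-branch the graph is replaced by its tangent plane $T_{\omega,n}$, so that $\langle \bxi, T(\bx)\rangle$ becomes $\langle \bxi_{\omega,n},\bx\rangle$ plus a constant plus an $O(|\bxi|2^{-2n})$ error, which sums over branches to the stated bound. Your worry about where $T_{\omega,n}$ is based is harmless: any base point within $O(2^{-n})$ of the branch gives the same Taylor error, and the centre of $D_n$ matches the paper's later identification $\bxi_{\omega,n}=P_{\xi\bv}(\bx_0)$.
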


\subsection{Non-expanding self-similar measures}
We first show Theorem~\ref{thm: maingeneraldomain} under assumption~\eqref{i:nonexpanding}. 
Let $U$ be an open connected set containing the support of $\mu$ on which $f$ is real analytic.

For each $\bv\in\mathbb{S}^{d-1}$ we defined $f_\bv$ to be the function given by the inner product $\bx \mapsto \langle f(\bx),\bv \rangle$. 
Next, for $\xi>0$, we define $P_{\xi\bv} \colon \bx\mapsto (\nabla f_\bv(\bx))^T(\xi)\in\mathbb{R}^k$, consistent with~\eqref{e:definepv}. 
We see that $P_\bv$ is real analytic. 
For each $\by\in\mathbb{R}^k$, consider the set
\begin{equation}\label{e:EvyDef}
E_{\bv,\by} \coloneqq P^{-1}_\bv(\{\by \}).
\end{equation}
We see that $E_{\bv,\by}$ is a (not necessarily proper/non-singular) analytic subvariety of $\mathbb{R}^k$ defined by the analytic equation $\{\bx:P_\bv(\bx)-\by = \bzero\}$. 
The correspondence between varieties and equations is fixed in this way. 
We denote this collection of subvarieties by $\cM_f$. 

If for some $\bv,\by$ the subvariety $E_{\bv,\by}$ has dimension $k$, then $E_{\bv,\by}$ contains an open ball, and $P_\bv$ is constant map on some ball, so $f$ is degenerate. 
Next, we consider the case that $f$ is non-degenerate, so the $E_{\bv,\by}$ never have dimension $k$, and are always proper analytic subvarieties of $\mathbb{R}^k$. 
\begin{lma}\label{lem: gen NE twisters}
    Let $\mu$ be a non-expanding self-similar measure on $\mathbb{R}^k$ with Fourier decay outside sparse frequencies, and let $U$ be an open neighbourhood of $\supp(\mu)$. Let $f\colon U\to\mathbb{R}^d$ be real analytic and non-degenerate. 
    Suppose $\mu$ uniformly decays near $\cM_f$. 
    Then $\mu_f$ has polynomial Fourier decay.
\end{lma}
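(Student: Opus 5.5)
Start from Lemma~\ref{lma: startingdecomp}. Fix a frequency $\xi\bv$ with $\bv\in\bS^{d-1}$ and $\xi$ large, and lift it to $\bxi=(\bzero,\xi\bv)\in\RR^{k+d}$. Choose $n$ with $2^{n}\approx \xi^{1/2+\kappa}$ for a small $\kappa>0$, so that the error $O(|\bxi|/2^{2n})$ is $O(\xi^{-2\kappa})$. The projected frequency $\bxi_{\omega,n}$ is, up to sign and a bounded distortion, $\xi P_\bv(\bx_{D_n})$, where $\bx_{D_n}$ is the centre of the cube $D_n$: the tangent plane to $\Gamma_f$ at $(\bx,f(\bx))$ has normals $(-\nabla f_\bv(\bx),\bv)$. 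Each branch $\mu_{\omega,n}$ is the image of $\mu$ under a similarity with ratio $r_\omega\in[\rho_m2^{-n},2^{-n}]$ and orthogonal part $O_\omega$. Hence
\[|\widehat{\mu}_{\omega,n}(\bxi_{\omega,n})|=|\omega|\,\bigl|\widehat{\mu}\bigl(r_\omega O_\omega^{T}\bxi_{\omega,n}\bigr)\bigr|,\]
and the argument has modulus about $\xi2^{-n}|P_\bv(\bx_{D_n})|\approx \xi^{1/2-\kappa}|P_\bv(\bx_{D_n})|$. So it suffices to show that the $\mu$-mass of branches for which $|\widehat\mu|$ at this frequency exceeds $\xi^{-\eta}$ is at most $\xi^{-\eta'}$, for some $\eta,\eta'>0$ depending only on $\mu,f$.

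\textbf{Step 1 (small gradients).} Remove branches whose cube centre satisfies $|P_\bv(\bx_{D_n})|<\xi^{-\tau}$ for small $\tau$. These lie in a $2^{-n}$-neighbourhood of $\{|P_\bv|<\xi^{-\tau}\}$. The hypothesis that $\mu$ uniformly decays near $\cM_f$ (applied with $\by=\bzero$, uniformly over $\bv\in\bS^{d-1}$) bounds their mass by $\xi^{-c\tau}$.

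\textbf{Step 2 (sparse frequencies).} The remaining branches have frequencies of size between $\xi^{1/2-\kappa-\tau}$ and $\xi^{1/2}$. Fourier decay outside sparse frequencies tells us that the bad set $B_R=\{|\widehat\mu|\geq R^{-\delta}\}\cap B(\bzero,R)$, with $R\approx\xi^{1/2}$, has Lebesgue measure at most $R^{\epsilon}$. Since $|\widehat\mu|$ is Lipschitz at unit scale, we may thicken $B_R$ by a unit neighbourhood while keeping measure $\ll R^{\epsilon}$. The map $\bx\mapsto \xi2^{-n}O_\omega^TP_\bv(\bx)$ depends on $\omega$ through $O_\omega$ and the ratio. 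This is where non-expansion enters: because the linear parts are non-expanding, the set of distinct pairs $(O_\omega, r_\omega\text{ up to bounded factor})$ appearing at level $n$ has cardinality $\ll e^{\epsilon' n}=\xi^{O(\epsilon')}$. We therefore group branches according to their linear part, and for each group we pull back the thickened bad set. A branch is bad only if $P_\bv(\bx_{D_n})$ lies in
\[\tfrac{2^n r_\omega^{-1}}{\xi}\,O_\omega B_R^{(1)},\]
a set of Lebesgue measure $\ll R^{\epsilon}(2^n/\xi)^k\approx \xi^{\epsilon/2-k(1/2-\kappa)}$. Cover this set by $\ll \xi^{\epsilon/2+k\kappa}$ balls $B(\by_j,\xi^{-1/2+\kappa})$. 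Points with $P_\bv(\bx)\in B(\by_j,\rho)$ lie in a neighbourhood of the sublevel set $\{|P_\bv-\by_j|<\rho\}$. The uniform decay near $\cM_f$, applied to $E_{\bv,\by_j}$ with uniform constants in $(\bv,\by)$, gives each such piece $\mu$-mass $\ll\rho^{\beta}=\xi^{-\beta(1/2-\kappa)}$.

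Summing over the $\xi^{O(\epsilon')}$ linear parts and the $\xi^{\epsilon/2+k\kappa}$ balls, the total bad mass is
\[\xi^{O(\epsilon')+\epsilon/2+k\kappa-\beta(1/2-\kappa)}.\]
Choose $\kappa,\epsilon,\epsilon'$ small relative to the fixed $\beta$; then choose $\delta$ from $\epsilon$ and set $\eta=\delta/2$. Good branches contribute at most $\xi^{-\eta}$ in total, since the weights sum to $1$. This gives polynomial decay.

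\textbf{Main obstacle.} The main obstacle is the counting in Step 2. Covering by balls costs a factor $\xi^{k\kappa}$, and this must be beaten by the exponent $\beta$ coming from the decay near $\cM_f$, which must hold uniformly over the whole family $E_{\bv,\by}$, with $\by$ ranging over a compact set and $\bv$ over the sphere. Without non-expansion the number of orientations $O_\omega$ would be exponential in $n$, which is why we invoke that hypothesis here. If the naive covering loses too much, the fallback is to use the $\delta^{\varepsilon}$-neighbourhood form of the uniform decay (as in Theorem~\ref{thm: Lo}). That form lets us cover by the larger balls of radius $\xi^{-\theta}$ with $\theta$ small, so that the ball count is only $\xi^{k\theta+\epsilon}$.
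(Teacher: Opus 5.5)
Your proposal is correct and is essentially the paper's own argument. Both start from Lemma~\ref{lma: startingdecomp}, group branches by their linear part using non-expansion, cover the sparse bad frequency set by $\ll R^{\epsilon}$ unit cubes, and pull each cube back through the exact identity $\bxi_{\omega,n}=\xi\nabla f_\bv(\bx_{D_n})$ to a ball of radius $\asymp 2^n/\xi$, whose $P_\bv$-preimage has small $\mu$-mass by the uniform decay near $E_{\bv,\by}$. The remaining differences are cosmetic: your Step~1 is redundant because the bad cube at the origin is handled by the same pullback with $\by$ near $\bzero$, and the number of balls is really $\ll R^{\epsilon}$ rather than $\xi^{\epsilon/2+k\kappa}$, so no fallback is needed; you should also group by the exact ratios $r_\omega$ (there are only polynomially many) rather than ratios up to a bounded factor, since the pulled-back set depends on $r_\omega$.
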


Here, `$\mu$ uniformly decays near $\cM_f$' means that for each $\varepsilon>0$ there exist $c,\eta > 0$ (depending on $\varepsilon$) such that for all $\bv\in\mathbb{S}^{d-1}$, $\by\in\mathbb{R}^k$, $m \in \cM_f$, $\delta>0$,
\[
\mu(\{|f_m|\leq \delta\}^{(\delta^\varepsilon)}) \leq c \delta^\eta,
\]
where $f_m$ is the defining analytic function for $m$ which is $P_\bv(\cdot )-\by$ as mentioned above. 
\begin{proof}[Proof of Lemma~\ref{lem: gen NE twisters}]
Let $n$ be a large integer and let $\bxi \in \RR^{k+d}$ with first $k$ coordinates being $0$ and $|\bxi|\asymp 2^{1.5n}$.\footnote{If one were trying to quantify the exponent, one could take $|\bxi|\asymp 2^{\gamma n}$ for a more carefully chosen $\gamma \in (1,2)$, as in~\cite{BY-quantitative}.} 
We start with Lemma~\ref{lma: startingdecomp} which gives 
\begin{align}\label{eqn: decom}
|\widehat{\mu_T}(\bxi)| \leq \sum_{D_n}\sum_{\mu_{\omega,n}\sim D_n} \left|\widehat{\mu}_{\omega,n}(\bxi_{\omega,n})\right|+O(|\bxi|/2^{2n}).
\end{align}
For each scaled and rotated copy $\mu_{\omega,n}$ of $\mu$, we have $\widehat{\mu}_{\omega,n}(\bxi_{\omega,n})=|\mu_{\omega,n}|\widehat{\mu}(r_{\omega,n} O_{\omega,n}(\bxi_{\omega,n}))$ for some scaling $r_{\omega,n}\asymp 2^{-n}$ and rotation $O_{\omega,n}\in O_k(\RR)$. 
It is convenient to define $L_n$ to be the following set of probability weights and scaling ratios of $\mu_{\omega,n}$: 
\begin{align*}
L_n=&\left\{(p,r,R): \text{$p,r,R$ are the probability weight, scaling ratio} \right. \\ 
&\left.\text{and rotation for some $\mu_{\omega,n}$} \right\}.
\end{align*}
Thus, $L_n$ offers a way to classify branches $\mu_{\omega,n}$. We rewrite~\eqref{eqn: decom} according to this classification: 
\[
|\widehat{\mu_T}(\bxi)| \leq \sum_{g\in L_n}\sum_{D_n\in\mathcal{D}_n}\sum_{\substack{\mu_{\omega,n}\in g \\ \mu_{\omega,n}\sim D_n}} |\mu_{\omega,n}| |\widehat{\mu}(r_{\omega,n} O_{\omega,n}(\bxi_{\omega,n}))|+O(|\bxi|/2^{2n}).\]
For $\mu_{\omega,n}\sim g$, we write $2^{-\kappa_g n}=|\mu_{\omega,n}|$ for the probability weight of those $\mu_{\omega,n}$. 
We now define $\mathcal{C}_n$ to be the collection of all $D_0$ (dyadic cubes of unit scale) that intersect $\{ r_{\omega,n} O_{\omega,n}(\bxi_{\omega,n}) \}_{D_n\in\cD_n}$. 
Note that cubes in $\mathcal{C}_n$ lie distance $\ll |\bxi/2^{n}|$ from the origin. 
For each $g\in L_n$, we write $\mu_{\omega,n} \sim g$ if it has the probability weight, scaling ratio and rotation indicated by $g$. 
Notice that $\#L_n\ll 2^{\epsilon n}$ for each $\epsilon>0$ because of the non-expanding property. Therefore 
\begin{align*}
|\widehat{\mu_T}(\bxi)| &\ll \sum_{g\in L_n} \sum_{D_0\in\mathcal{C}_n}  \sum_{\substack{\mu_{\omega,n}\in g \\ r_{\omega,n}O_{\omega,n}(\bxi_{\omega,n})\in D_0}} |\mu_{\omega,n}| |\widehat{\mu}(r_{\omega,n}O_{\omega,n}(\bxi_{\omega,n}))| + |\bxi|/2^{2n} \\
&\ll \sum_{g\in L_n}\frac{1}{2^{\kappa_g n}}\sum_{D_0\in\mathcal{C}_n}\sum_{\substack{\mu_{\omega,n}\in g \\ r_{\omega,n}O_{\omega,n}(\bxi_{\omega,n}) \in D_0}} |\widehat{\mu}(r_{\omega,n} O_{\omega,n}(\bxi_{\omega,n}))| + |\bxi|/2^{2n}.
\end{align*}
Note that the rotation part $O_{\omega,n}$ of $\mu_{\omega,n}$ depends only on $g$. From here we write
\[
N_{D_0}(g) \coloneqq \#\{\mu_{\omega,n}:\mu_{\omega,n}\in g, r_{\omega,n}O_{\omega,n}(\bxi_{\omega,n}) \in  D_0\}.
\]
Then we see that
\begin{equation}\label{eqn:upperboundqual}
|\widehat{\mu_T}(\bxi)|\ll  \sum_{g\in L_n} \frac{1}{2^{\kappa_g n}}\sum_{D_0\in\mathcal{C}_n} N_{D_0}(g) \sup_{\bxi' \in D_0} |\widehat{\mu}(\bxi')| + |\bxi|/2^{2n}.
\end{equation}

Since $\mu$ has Fourier decay outside sparse frequencies, for each $\epsilon>0$ we can find a $\delta>0$ so that 
\begin{equation}\label{e:dosfconsequence}
    \# \{ D_0 \in \mathcal{C}_n : \exists \bxi'\in D_0 \mbox{ s.t. } |\widehat{\mu}(\bxi')|\geq |\bxi/2^n|^{-\delta} \} \ll 2^{\varepsilon n}.
\end{equation}
We then have
\begin{align*}
& \sum_{g\in L_n}\frac{1}{2^{\kappa_g n}} \sum_{D_0\in\mathcal{C}_n} N_{D_0}(g)  \sup_{\bxi' \in D_0} |\widehat{\mu}(\bxi')|  \\
\ll &  \sum_{g\in L_n}\frac{1}{2^{\kappa_g n}} \sum_{D_0\in\mathcal{C}_n} N_{D_0}(g) |\bxi/2^n|^{-\delta}\\
&+ \sum_{g\in L_n}\frac{1}{2^{\kappa_g n}} \sum_{\substack{D_0: \exists\bxi'\in D_0, \\ |\widehat{\mu}(\bxi')|\geq |\bxi/2^n|^{-\delta}}} N_{D_0}(g) + |\bxi|/2^{2n}. 
\end{align*}
Observe that because $\mu$ is a probability measure,
\begin{equation*}
 \sum_{g\in L_n}\frac{1}{2^{\kappa_g n}} \sum_{D_0\in\mathcal{C}_n} N_{D_0}(g) \ll\sum_{g\in L_n}\frac{1}{2^{\kappa_g n}} \#\{\mu_{\omega,n}:\mu_{\omega,n}\in g\} \ll 1.
\end{equation*}
On the other hand, because of~\eqref{e:dosfconsequence}, 
\[
 \sum_{g\in L_n} \frac{1}{2^{\kappa_g n}}\sum_{\substack{D_0: \exists\bxi'\in D_0, \\ |\widehat{\mu}(\bxi')|\geq |\bxi/2^n|^{-\delta}}} N_{D_0}(g)\ll \sum_{g\in L_n} \frac{1}{2^{\kappa_g n}} 2^{\epsilon n}   \max_{D_0} N_{D_0}(g).
\]

Next, we need the following claim, whose proof will use the decay property for $\mu$ near $\cM_f$. 
\begin{claim}\label{claim:boundnumber}
    There exists $\eta>0$ such that, uniformly for all $D_0$ in consideration,
\begin{equation}\label{eqn: preimage}
N_{D_0}(g) = \#\{\mu_{\omega,n}:\mu_{\omega,n}\in g, \bxi_{\omega,n}\in r^{-1}_gR^{-1}_g(D_0)\} \ll \left(\frac{2^n}{|\bxi|}\right)^{\eta} 2^{\kappa_g n}.
\end{equation}
Here $r_g$ is the scaling ratio corresponding to $g$ and $R_g$ is the rotation corresponding to $g$. 
\end{claim}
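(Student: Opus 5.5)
Since every branch in $g$ has weight $2^{-\kappa_g n}$, the number $N_{D_0}(g)$ equals $2^{\kappa_g n}$ times the $\mu$-mass of the branches of type $g$ whose rescaled frequency lands in $D_0$. So the plan is to bound that mass by the $\mu$-measure of a neighbourhood of a sublevel set of some $P_\bv-\by$, and then apply the uniform decay hypothesis near $\cM_f$.

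The first step is to compute $\bxi_{\omega,n}$. Write $\bxi=(\bzero,\xi\bv)$ with $\bv\in\mathbb{S}^{d-1}$ and $\xi\asymp|\bxi|$. The plane $T_{\omega,n}$ is the tangent plane of $\Gamma_f$ at some point $T(\bx_{\omega,n})$, where $\bx_{\omega,n}\in 3D_n$ lies in the support of $\mu_{\omega,n}$. Solving the orthogonality condition \eqref{e:definexiomegan} against the tangent vectors $(\mathbf{e}_i,\partial_i f(\bx_{\omega,n}))$ gives
\[
\bxi_{\omega,n}=-\xi\,\nabla f_\bv(\bx_{\omega,n})=-P_{\xi\bv}(\bx_{\omega,n}),
\]
up to a sign convention. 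Hence the condition $\bxi_{\omega,n}\in r_g^{-1}R_g^{-1}(D_0)$ says that $P_\bv(\bx_{\omega,n})$ lies in a cube $Q$ of side $\asymp r_g^{-1}/\xi\asymp 2^n/|\bxi|=:\delta$, and $Q$ depends only on $g$ and $D_0$. Let $\by$ be the centre of $Q$. Then $|P_\bv(\bx_{\omega,n})-\by|\ll\delta$, so $\bx_{\omega,n}\in\{|P_\bv-\by|\ll\delta\}$.

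The second step passes from points to branches. Each such branch is supported within distance $\ll 2^{-n}$ of $\bx_{\omega,n}$. Since $|\bxi|\asymp 2^{1.5n}$, we have $\delta\asymp 2^{-n/2}$, so $2^{-n}\le\delta^{\varepsilon}$ for any fixed $\varepsilon<2$; I will take $\varepsilon=1$. The branches of type $g$ are pieces of the disintegration of $\mu$ coming from the cut set $\{\omega_0^{l_{\omega,n}}\}$, so their total masses are additive. Therefore
\[
2^{-\kappa_g n}N_{D_0}(g)\le \mu\bigl(\{|P_\bv-\by|\le C\delta\}^{(\delta^{\varepsilon})}\bigr).
\]
The constant $C$ is harmless: one can either absorb it by replacing $\delta$ with $C\delta$ and adjusting $\varepsilon$ slightly, or rescale the defining function.

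The third step applies the hypothesis that $\mu$ uniformly decays near $\cM_f$, with $m=E_{\bv,\by}$ and $f_m=P_\bv-\by$. This gives the bound $c\,\delta^{\eta}=c\,(2^n/|\bxi|)^{\eta}$, uniformly in $\bv$, $\by$, $g$ and $D_0$, which is \eqref{eqn: preimage}.

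The main obstacle is making the first step precise. One has to fix the exact definition of $T_{\omega,n}$ from \cite{BY-quantitative} (the tangent plane at a reference point of the branch) and verify the formula $\bxi_{\omega,n}=-P_{\xi\bv}(\bx_{\omega,n})$. One also has to make sure that the uniformity over all $\by\in\RR^k$ and $\bv$ in the hypothesis really covers the cubes $Q$ that arise. For $\by$ far from $P_\bv(\supp\mu)$ this is automatic, because the relevant set is empty.
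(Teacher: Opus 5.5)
Your proposal is correct and follows the paper's route: identify $\bxi_{\omega,n}$ with $P_{\xi\bv}$ evaluated at a reference point of the branch, confine the relevant branches to a $\delta^{\varepsilon}$-neighbourhood of $\{|P_\bv-\by|\ll 2^n/|\bxi|\}$, apply uniform decay near $\cM_f$, and divide the total mass by the common weight $2^{-\kappa_g n}$. The paper uses the centre of $D_n$ as reference point, and the orthogonality condition gives $+P_{\xi\bv}$ (your minus sign is a harmless slip); likewise your choice $\varepsilon=1$ instead of the paper's small $\varepsilon$ is fine, since any $\varepsilon<2$ gives $\delta^{\varepsilon}\gg 2^{-n}$ when $|\bxi|\asymp 2^{1.5n}$.
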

From this claim, we obtain
\[
 \sum_{g\in L_n} \frac{1}{2^{\kappa_g n}}\sum_{\substack{D_0: \exists\bxi'\in D_0, \\ |\widehat{\mu}(\bxi')|\geq |\bxi/2^n|^{-\delta}}}  N_{D_0}(g)\ll 2^{2\epsilon n}  \left(\frac{2^n}{|\bxi|}\right)^{\eta}.
\]
Given the claim, \eqref{eqn:upperboundqual} gives 
\begin{align}\label{eq: choosing xi}
|\widehat{\mu_T}(\bxi)|\ll |\bxi/2^n|^{-\delta}+2^{2\epsilon n} (2^n/|\bxi|)^{\eta} + |\bxi|/2^{2n}.
\end{align}
Since we chose $|\bxi|\asymp 2^{1.5n}$, and we can choose $\varepsilon$ to be small enough depending on $\eta$, this implies that
\[
|\widehat{\mu_T}(\bxi)|\ll |\bxi|^{-\sigma}
\]
for some $\sigma>0$, as required. Since the above holds for all $n$ and all choice of $\bxi$ with $|\bxi|\asymp 2^{1.5n}$, this finishes the proof. 
\end{proof}
\begin{proof}[Proof of Claim~\ref{claim:boundnumber}]
We need the decay property for $\mu$ near $\cM_f$, which we assumed to hold. The idea is that the support of each $\mu_{\omega,n}\in g$ so that $\bxi_{\omega,n}\in r^{-1}_gR^{-1}_g(D_0)$ is contained in a thin neighbourhood of an analytic submanifold of $\mathbb{R}^k$. More precisely, recalling~\eqref{e:definepv}, define the map $P_{\xi\bv}\colon \mathbb{R}^k\to\mathbb{R}^k$ by 
\[
P_{\xi\bv} \colon \bx\in\mathbb{R}^k\mapsto (\nabla f(\bx))^{T} (\xi\bv)=(\nabla f_\bv(\bx))^{T} (\xi)
\]
where $\bv\in\mathbb{S}^{d-1}$, and $\xi \geq 0$ is such that $\bxi=\xi\bv$, and $(\nabla f(\bx))^{T} \colon \RR^d \to \RR^k$ denotes the transpose of the linear map describing by the derivative of $f$ at $\bx$. This map $P_{\xi\bv}$ has the property that for $\bx_0\in\mathbb{R}^k$ being the centre of $D_{n}$ as in the beginning of this proof, and $\bxi=\xi\bv$, the corresponding $\bxi_{\omega,n}$ is precisely $P_{\xi\bv}(\bx_0)$. 
Thus, the branches $\mu_{\omega,n}$ for which $\bxi_{\omega,n}\in r^{-1}_gR^{-1}_g(D_0)$ are located in the inverse image $P^{-1}_{\xi\bv}(D')$ for some cube $D'$ of size $\asymp 1/r_g\asymp 2^n$, that is to say, $P^{-1}_{\bv}(D'')$ for some cube $D''$ of size $\asymp 2^n/\xi$.

Now, we can consider $E_{\bv,\by}$ for a suitable $\by\in\mathbb{R}^k$ (the centre of $D''$). More precisely, $E_{\bv,\by}$ is the variety defined via the analytic function $f_{\bv,\by} \colon \bx\mapsto P_\bv(\bx)-\by$. 
In this case, by the uniform decay property, we see that for each chosen $\varepsilon>0$, 
\[ \mu(\{\bx:|P_\bv(\bx)-\by|<\delta\}^{(\delta^\varepsilon)})\leq c\delta^\eta \] 
for some constants $c,\eta$ and all $\delta>0$. 
By choosing $\varepsilon$ suitably small\footnote{Here $\varepsilon$ should be small to make sure that for $\delta\asymp 2^n/\xi$, $\delta^\varepsilon$ should be $\gg 1/2^n$. This is because we are considering branches of $\mu$ at scale $\asymp 1/2^n$. For $\xi\asymp 1/2^{1.5n}$ as required in \eqref{eq: choosing xi}, $\varepsilon=0.0001$ would be sufficient.}, we see that the total $\mu$ mass of all above discussed branches $\mu_{\omega,n}$ is $\ll (2^n/\xi)^\eta$. This implies that the number of such branches is $\ll (2^n/\xi)^\eta 2^{\kappa_g n}$, which proves the claim. 
\end{proof}

\begin{proof}[Proof of Theorem \ref{thm: maingeneraldomain} under hypothesis \eqref{i:nonexpanding}, assuming Theorems \ref{thm: Lo}, \ref{thm: AFD}]
All is left to verify the hypotheses of Lemma~\ref{lem: gen NE twisters}. 
Decay outside sparse frequencies follows from the affine irreducibility of $\mu$ and Theorem~\ref{thm: AFD}. 
It remains to verify the uniform decay property. 
By Theorem~\ref{thm: Lo}, we need to verify that the collection of analytic functions $P_\bv(\cdot )-\by$ is a compact family excluding the zero function. 
By uniformly bounding $|\nabla f|$ over the compact set $\supp(\mu)$, we see that if $|\by|$ is sufficiently large then for all $\bv\in\mathbb{S}^{d-1}$, $\supp(\mu) \cap E_{\bv,\by} = \varnothing$. 
Therefore it suffices to consider $\bv,\by$ which range over compact sets, so the compactness of the family follows. 
The non-containment of the zero function follows from the non-degeneracy of $f$. 
\end{proof}

\subsection{Expanding self-similar measures}
The proof of Theorem~\ref{thm: maingeneraldomain} under the assumption~\eqref{i:expanding} is similar to the proof under assumption~\eqref{i:nonexpanding}. 
If $|P_\bv|$ is not constant, then for each $r\in [0,\infty)$, the set \[
E_{\bv,r}=\{|P_\bv(\bx)|=r\}
\]
is an analytic variety. Its dimension cannot be $k$, otherwise $|P_\bv|^2$ (which is analytic) would be a constant function. Those subvarieties form a collection $\cM^{K}_f$ ($K$ for ``cone''). For such an $E_{\bv,r}$, we assign the defining equation to be
\[
|P_\bv(\bx)|^2=r^2.
\]
This is because $\bx \mapsto |P_\bv(\bx)|^2-r^2$ is real analytic. We have the following result.
\begin{lma}
    Let $\mu$ be a self-similar measure on $\RR^k$ with Fourier decay outside sparse frequencies and let $U$ be an open connected neighbourhood of $\supp(\mu)$. Let $f\colon U \to\mathbb{R}^d$ be real analytic and assume that for every $\bv \in \mathbb{S}^{d-1}$, $|P_\bv|$ is non-constant. 
    Suppose $\mu$ uniformly decays near $\cM^K_f$. Then $\mu_f$ has polynomial Fourier decay.
\end{lma}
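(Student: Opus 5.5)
The plan is to run the same argument as in Lemma~\ref{lem: gen NE twisters}, but to replace the classification of branches by their full linear part (scaling and rotation) with a classification by scaling ratio and probability weight only. The rotations can now be expanding, so the counting has to be insensitive to $O_{\omega,n}$. We do this by grouping frequencies according to the \emph{norm} of $r_{\omega,n}O_{\omega,n}(\bxi_{\omega,n})$ rather than its position.

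Fix a large $n$ and $\bxi$ with first $k$ coordinates $0$ and $|\bxi|\asymp 2^{1.5n}$. Lemma~\ref{lma: startingdecomp} gives
\[
|\widehat{\mu_T}(\bxi)|\le \sum_{D_n}\sum_{\mu_{\omega,n}\sim D_n}|\mu_{\omega,n}|\,|\widehat{\mu}(r_{\omega,n}O_{\omega,n}(\bxi_{\omega,n}))| + O(|\bxi|/2^{2n}).
\]
The set of pairs (weight, scaling ratio) of $2^{-n}$-branches has cardinality $O(n^{N})$. This is because weights and ratios are products of the $p_i$, $r_i$ determined by the multiplicities of the symbols. So we let $L_n$ be this set of classes $g$, and then $\#L_n\ll 2^{\epsilon n}$ holds without any non-expanding hypothesis.

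For a class $g$, the frequency $\bxi'=r_gO_{\omega,n}(\bxi_{\omega,n})$ lies in the ball of radius $\asymp|\bxi|/2^n$ at the origin. Partition this ball into spherical shells $S_j=\{j\le|\bxi'|<j+1\}$. The key input is Theorem~\ref{thm: AFD}, which we upgrade to a statement on shells. Averaging over $O_k(\RR)$ is not available, so instead we use Theorem~\ref{thm: AFD} applied to $\mu$ together with the fact that $\widehat\mu$ is Lipschitz on unit scale. The set of unit cubes where $|\widehat\mu|\ge (|\bxi|/2^n)^{-\delta}$ has cardinality $\ll 2^{\epsilon n}$, so it meets at most $\ll 2^{\epsilon n}$ shells $S_j$.

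Call a shell bad if it contains a point where $|\widehat\mu|\ge(|\bxi|/2^n)^{-\delta}$; there are $\ll 2^{\epsilon n}$ bad shells. Branches whose frequency lies in a good shell contribute at most $(|\bxi|/2^n)^{-\delta}$ in total, since the weights sum to $1$. For the bad shells, note that $|\bxi_{\omega,n}|=|P_{\xi\bv}(\bx_0)|$, where $\bx_0$ is the centre of $D_n$; this identity is independent of the rotation. Hence the branches of class $g$ landing in a fixed shell $S_j$ are supported in a $2^{-n}$-neighbourhood of $\{\bx: |P_\bv(\bx)|\in[r-\rho,r+\rho]\}$, with $r\asymp j\,2^n/(r_g^{-1}\cdots)$ suitably normalised and $\rho\asymp 2^n/|\bxi|$. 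Up to constants this is the set $\{||P_\bv|^2-r^2|\lesssim\rho\}$.

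Uniform decay near $\cM_f^K$ then bounds the $\mu$-mass of this region by $\ll\rho^{\eta}=(2^n/|\bxi|)^\eta$. This requires choosing $\varepsilon$ small so that $\rho^{\varepsilon}\gg2^{-n}$, exactly as in the footnote to Claim~\ref{claim:boundnumber}. Summing over the $\ll 2^{\epsilon n}$ bad shells and the $\ll2^{\epsilon n}$ classes gives
\[
|\widehat{\mu_T}(\bxi)|\ll (|\bxi|/2^n)^{-\delta}+2^{2\epsilon n}(2^n/|\bxi|)^{\eta}+|\bxi|/2^{2n},
\]
which is a negative power of $|\bxi|$ once $\epsilon$ is small relative to $\eta$.

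A small point remains: replacing $|P_\bv|$ by $|P_\bv|^2$ in a $\rho$-window requires $r\gg\rho$ or a separate treatment of small $r$. For $r\lesssim\rho$ we just use the window $|P_\bv|^2\le 2\rho^2\le\rho$, which is again covered by the hypothesis with defining function $|P_\bv|^2-0$.

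The main obstacle is the shell-counting step. Theorem~\ref{thm: AFD} controls the Lebesgue measure of the exceptional set, not the number of shells it meets, and a single exceptional unit cube meets only $O(1)$ shells. So the count of bad shells is at most a constant times the number of bad cubes, which is fine. The real care is needed in verifying that the branch-to-shell assignment depends on $\omega$ only through $g$ and the location $\bx_0$, so that the mass bound can be applied per class. It is this point that removes the need for the non-expanding property.
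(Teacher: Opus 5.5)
Your proposal is correct and is essentially the paper's argument. You classify $2^{-n}$-branches by weight and scaling ratio only (polynomially many classes), and you observe that the rotation-invariant norm $r_g|\bxi_{\omega,n}|=r_g|P_{\xi\bv}(\bx_0)|$ confines the branches hitting a bad frequency region to the preimage under $P_{\bv}$ of an annulus of width $\asymp 2^n/|\bxi|$. You then bound that mass by uniform decay near $\cM^K_f$, exactly as the paper does with its annuli $A_{D_0/r_g}$; grouping by unit shells instead of by bad cubes $D_0$ is purely cosmetic.

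Two minor remarks. First, the sparse-frequency input should be quoted from the lemma's hypothesis rather than from Theorem~\ref{thm: AFD}, which requires affine irreducibility and that is not assumed here. Second, your small-$r$ caveat is unnecessary: only the inclusion $\{\bx:\bigl||P_\bv(\bx)|-r\bigr|\leq\rho\}\subseteq\{\bx:\bigl||P_\bv(\bx)|^2-r^2\bigr|\ll\rho\}$ is used, and it holds for every $r\geq 0$ because $|P_\bv|$ is bounded near $\supp(\mu)$.
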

\begin{proof}
    The proof is very similar to that of Lemma~\ref{lem: gen NE twisters}. We need to rectify the difficulty that our self-similar system may now have a large rotation group. 

    We can follow the proof of Lemma~\ref{lem: gen NE twisters} until the construction of $L_n$. 
    Because the rotation group may not be non-expanding we must not include it in $L_n$. The new $L_n$ is defined as
    \[
L_n=\{(p,r): \text{$p,r$ are the probability weight and scaling ratio for some $\mu_{\omega,n}$}\}.
\]
Notice that $\#L_n$ grows only polynomially with respect to $n$. 
We can then follow the rest of the proof of Lemma~\ref{lem: gen NE twisters} until we hit the construction of $N_{D_0}(g)$. It is now
\[
N_{D_0}(g) \coloneqq \#\{\mu_{\omega,n}:\mu_{\omega,n}\in g, r_{\omega,n} O_{\omega,n}(\bxi_{\omega,n})\in  D_0\}.
\]
We can no longer control the rotation part $O_{\omega,n}$. Nonetheless, in analogy with Claim~\ref{claim:boundnumber} we claim that
\begin{equation*}
N_{D_0}(g) \ll \#\{\mu_{\omega,n}:\mu_{\omega,n}\in g, \bxi_{\omega,n}\in A_{D_0/r_g}\} \ll \left(\frac{2^n}{|\bxi|}\right)^{\eta} 2^{\kappa_g n}.
\end{equation*}
Here $r_g$ is the scaling ratio corresponding to $g$ and $A_{D_0/r_g}$ is the set of points $\bxi$ in $\mathbb{R}^k$ so that $R(\bxi)\in D_0/r_g$ for some rotation $R$. Thus $A_{D_0/r_g}$ is an annulus of thickness $\asymp 1/r_g\asymp 2^n$. This annulus can have an inner radius $0$.

Thus, the effect of not controlling the rotation part is that instead of considering preimages of cubes under $P_{\xi\bv}$, we need to consider preimages of annuli (which are much larger than cubes\footnote{Cubes are in some sense zero-dimensional objects, while annuli are $k-1$ dimensional objects.}) under $P_{\xi\bv}$. 
The condition that $|P_\bv|$ is not constant for all $\bv$ and the fact that $\mu$ uniformly decays near $\cM^K_f$ prove the claim. This is similar to the last part of the proof of Lemma~\ref{lem: gen NE twisters}. This finishes the proof.
\end{proof}

\begin{proof}[Proof of Theorem \ref{thm: maingeneraldomain} under hypothesis \eqref{i:expanding}, assuming Theorems \ref{thm: Lo}, \ref{thm: AFD}]
We again verify the assumptions of Lemma~\ref{lem: gen NE twisters}. 
Decay outside sparse frequencies follows from Theorem~\ref{thm: AFD}. 
Uniformly bounding $|\nabla f|$ over $\supp(\mu)$ we can take $\bv,r$ to lie in a compact region and obtain a compact family $\{|P_\bv(\cdot )|^2-r^2\}_{\bv,r}$. 
Here, $|P_\bv(\cdot )|^2$ is the square sum of components of $P_\bv(\cdot )$, which is real analytic. 
This family does not contain the zero function since $f$ is non-conical. 
Using Theorem~\ref{thm: Lo} then gives the decay property and finishes the proof. 
\end{proof}

\subsection{Self-similar measures with polynomial Fourier decay}
We can also use similar methods to prove Theorem~\ref{thm: maingeneraldomain} under assumption~\eqref{i:selfsimhaspoly}. 
As discussed above, we assume $U$ is connected. 
\begin{proof}[Proof of Theorem \ref{thm: maingeneraldomain} \eqref{i:selfsimhaspoly}]
Our non-trapped assumption on $f$ means that for $\bv \in \mathbb{S}^{d-1}$, $\bx \mapsto f_{\bv}(\bx)$ can be affine but can never be constant on any ball intersecting $\supp(\mu)$. 
In other words, $\nabla f_\bv$ can be a constant function on $U$, but it is not identically zero on any component. 
We define $E_{\bv,\bzero}$ as in~\eqref{e:EvyDef}. We see that for all $\bv \in \mathbb{S}^{d-1}$, $E_{\bv,\bzero}$ is a proper analytic subvariety with dimension $<k$. 
Theorem~\ref{thm: Lo} shows that $\mu$ uniformly decays near the compact family $\{ E_{\bv,\bzero} : \bv \in \mathbb{S}^{d-1}\}$, i.e. for all $\varepsilon > 0$ there exists $\eta > 0$ such that 
\begin{equation}\label{e:polypushdeltadelta}
\mu( \{\bx:|P_\bv(\bx)|<\delta\}^{(\delta^{\varepsilon})}) \ll \delta^{\eta} 
\end{equation}
uniformly for $\delta > 0$ and $\bv \in \mathbb{S}^{d-1}$. 

By assumption, there exists $\sigma>0$ such that $|\widehat{\mu}(\bxi')| \ll |\bxi'|^{-\sigma}$ for $\bxi' \in \RR^k \setminus \{0\}$. 
Fix $\gamma \in (1,2)$. Since we are not concerned with optimising the exponent here, the precise choice of $\gamma$ does not matter, so we can choose $\gamma = 1.5$ for concreteness. 
Fix $0<\tau<1$ small enough that $\gamma - 1 - \tau > 0$. 
Let $\bv \in \mathbb{S}^{d-1}$ (we will prove uniform polynomial Fourier decay estimates over such $\bv$), and let $\bxi = \xi \bv$ for some $\xi \in [2^{\gamma n},2^{\gamma (n+1)})$. 
For large $n \in \NN$, we divide the set of dyadic cubes in $\mathcal{D}_n$ which intersect $\supp(\mu)$ into two sets, called $\mathcal{C}_n$ and $\mathcal{C}_n'$, where $\mathcal{C}_n$ consists of those which intersect the $2^{-\tau n}$-neighbourhood of $\{\bx:|P_\bv(\bx)|<2^{-\tau n}\}$ and $\mathcal{C}_n'$ consists of those which do not. 
We may assume $n$ is sufficiently large that all cubes in $\mathcal{C}_n \cup \mathcal{C}_n'$ are contained in $U$, the domain of $f$. Let $\mu_{\omega,n}$ be associated with a cube of level $n$. Consider $\bxi_{\omega,n} \in \RR^k$, $r_{\omega,n}\asymp 2^{-n}$ (the scaling of $|\mu_{\omega,n}|$) and $O_{\omega,n}\in O_k(\RR)$. Recalling the relationship between $\bxi_{\omega',n}$ and $P_{\bv}$, we see that the magnitude of $\bxi_{\omega',n}$ associated with a cube in $\mathcal{C}_n'$ is $\gg 2^{(\gamma - \tau)n}$. 
Starting now from Lemma~\ref{lma: startingdecomp} and using~\eqref{e:polypushdeltadelta}, 
\begin{align*}
     |\widehat{\mu_f}(\bxi)| &\leq \sum_{D_n \in \mathcal{C}_n}\sum_{\mu_{\omega,n}\sim D_n} \left|\widehat{\mu}_{\omega,n}(\bxi_{\omega,n})\right| + \sum_{D_n' \in \mathcal{C}_n'}\sum_{\mu_{\omega',n}\sim D_n'} \left|\widehat{\mu}_{\omega',n}(\bxi_{\omega',n})\right| + O(|\bxi|/2^{2n}) \\
     &\ll 2^{-\tau \eta n} +  \sum_{D_n' \in\mathcal{C}_n'} \sum_{\mu_{\omega',n}\sim D_n'} |\mu_{\omega',n}| |\widehat{\mu}(r_{\omega',n} O_{\omega',n}(\bxi_{\omega',n}))|   + |\bxi|^{1 - 2/\gamma} \\
     &\ll |\bxi|^{-\tau \eta / \gamma}   +  2^{-(\gamma - \tau - 1) \sigma n}   +   |\bxi|^{1 - 2/\gamma} \\
     &\ll |\bxi|^{-\min\{ \tau \eta / \gamma , (\gamma - \tau - 1)\sigma / \gamma , 2/\gamma - 1 \}} ,
\end{align*}
as required. 
\end{proof}

\subsection{Lift to the graph}
Next, we prove the results concerning lifts to the graph. 
Instead of applying Theorem~\ref{thm: Lo}, we will need to use a more quantitative version (Proposition~\ref{p:quanifiedlo}), as well as a general fact about compact families of analytic functions (Lemma~\ref{lma: choice of D}). 

\begin{proof}[Proof of Theorem \ref{thm: lifttograph NE}] 
Consider the map $T\colon \bx \mapsto (\bx,f(\bx))$. We consider the function $P^T_\bv$ for some $\bv \coloneqq (\bv_k,\bv_d)\in \mathbb{S}^{k+d-1}$ (with $\bv_k,\bv_d$ the corresponding components in the first $k$ and last $d$ coordinates in $\mathbb{R}^{k+d}$), defined as
\begin{equation}\label{e:defineptv}
P^T_\bv(\bx) \coloneqq \bv_k + P_{\bv_d}(\bx) = \bv_k+|\bv_d|P_{\bv_d/|\bv_d|}(\bx).
\end{equation}
This is well-defined because $\bv_d$ is not the zero vector by the hypothesis.

Next, note that $\{P_{\bv}\}_{\bv \in \mathbb{S}^{d-1}}$ forms a compact family. 
By the non-degeneracy assumption, this family does not contain a constant function, so $\{P_{\bv} - P_{\bv}(\mathbf{0})\}_{\bv \in \mathbb{S}^{d-1}}$ does not contain the zero function, and it follows directly from Definition~\ref{d:compact} that this family is also compact. 
Applying Lemma~\ref{lma: choice of D} to this latter family, get an integer $D > 0$ and positive number $C>0$ such that for all $\bv \in \mathbb{S}^{d-1}$ and $\bx \in \supp(\mu)$, at least one of the coefficients of $P_{\bv}$ at $\bx$ with degree at most $D$ has absolute value larger than $C$. 
From~\eqref{e:defineptv} we see that for all $\bv \coloneqq (\bv_k,\bv_d)\in \mathbb{S}^{k+d-1}$ with $\bv_d \neq \mathbf{0}$ and all $\bx \in \supp(\mu)$, at least one of the coefficients of $P^T_\bv(\bx)$ with degree in $\{1,2,\dotsc,D\}$ has has absolute value larger than $C |\bv_d|$. Crucially, $D$ does not depend on $\bv$ here. We can therefore apply Proposition~\ref{p:quanifiedlo} to get the result, just as we used Theorem~\ref{thm: Lo} to prove Theorem~\ref{thm: maingeneraldomain}~\eqref{i:nonexpanding}. 
\end{proof}

\begin{proof}[Proof of Theorem~\ref{thm: lifttograph - expanding}]
Recall the definition of $P^T_{\bv}$ from~\eqref{e:defineptv}. 
If $|P^T_\bv|$ were a constant function of $\bx$, then we see that the function defined by
\[
\tilde{f}\colon \bx\mapsto |\bv_d|f(\bx)+\langle \bv_k,\bx \rangle \bv_d/|\bv_d|
\]
would be conical. 
This would mean that $f$ is the sum of a conical function and an affine function, contradicting the hypothesis. 
So we have shown that $|P^T_\bv|$ is not constant. The family $\{|P^T_\bv(\cdot )|^2-r^2\}_{\bv,r}$ is still a compact family. However, it contains the zero function if we allow $\bv_d=\bzero$. Nonetheless, fixing $\varepsilon>0$, we see that after restricting to $|\bv_d| \geq \varepsilon$, we have a compact family $\cF_\varepsilon$. 
Therefore as in the proof of Theorem~\ref{thm: maingeneraldomain}~\eqref{i:expanding}, we have polynomial Fourier decay with uniform exponent and constant over directions $(\bv_k,\bv_d)\in \mathbb{S}^{k+d-1}$ with $|\bv_d| \geq \varepsilon$. 
\end{proof}

\begin{proof}[Proof of Theorem \ref{thm: polygraph}]
    Let $\bv \coloneqq (\bv_k,\bv_d)\in \mathbb{S}^{k+d-1}$. 
    If $\bv_d = \mathbf{0}$ then $P_{\bv}^T$ is constant and non-zero. 
    Define $P_{\bv}^T$ as in~\eqref{e:defineptv}. 
    If $\bv_d \neq \mathbf{0}$ then $P_{\bv}^T$ is non-constant by the non-degeneracy assumption. 
    Therefore $\{ (P_{\bv}^T)^{-1}(\mathbf{0}) \}_{\bv \in \mathbb{S}^{k+d-1}}$ forms a compact family that does not contain the zero function. 
    Thus as in the proof of Theorem~\ref{thm: maingeneraldomain}~\eqref{i:selfsimhaspoly} (using the polynomial Fourier decay of $\mu$), we get the desired polynomial Fourier decay for $\mu_T$. 
\end{proof}

\section{Uniform measure \L ojasiewicz inequality: Theorem \ref{thm: Lo}}\label{s:lo}

\subsection{Self-similar measures of proper varieties}

We first record the following basic fact, which will also be used in Section~\ref{s:afd}. 

\begin{lma}\label{l:tube}
    Let $\mu$ be a finite Borel measure on $\RR^k$ whose support is compact and does not lie in any affine hyperplane. 
    Then there exist $\delta,\gamma \in (0,1)$ such that for all affine hyperplanes $K$ of $\RR^k$, 
    \[ \mu(K^{(\gamma)}) \leq (1-\delta) \mu(\RR^k). \]
\end{lma}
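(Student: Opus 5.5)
We argue by contradiction using compactness of the space of affine hyperplanes meeting a fixed neighbourhood of $\supp(\mu)$. If $\mu(\RR^k)=0$ there is nothing to prove, so normalise $\mu(\RR^k)=1$. Suppose the conclusion fails. Then for every $j\in\NN$, taking $\gamma=\delta=1/j$, there is an affine hyperplane $K_j$ with $\mu(K_j^{(1/j)}) > 1-1/j$. Since $\mu(K_j^{(1/j)})>0$ for large $j$, each such $K_j$ meets the $1$-neighbourhood of the compact set $S\coloneqq\supp(\mu)$. Parametrise $K_j=\{\bx:\langle \bx-\bt_j,\mathbf{e}_j\rangle=0\}$ with $\mathbf{e}_j\in\bS^{k-1}$ and $\bt_j$ the point of $K_j$ closest to the origin. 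The $\bt_j$ then lie in a bounded set, so after passing to a subsequence $\mathbf{e}_j\to\mathbf{e}$ and $\bt_j\to\bt$. Set $K=\{\bx:\langle\bx-\bt,\mathbf{e}\rangle=0\}$.

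The key step is to show $\mu(K)=1$. Fix $\gamma>0$. The function $\bx\mapsto\langle\bx-\bt_j,\mathbf{e}_j\rangle$ converges uniformly on the bounded set $S^{(1)}$ to $\langle\bx-\bt,\mathbf{e}\rangle$. Hence for all large $j$ we have $K_j^{(1/j)}\cap S\subseteq K^{(\gamma)}$, because the distance to $K_j$ is $|\langle\bx-\bt_j,\mathbf{e}_j\rangle|$. It follows that $\mu(K^{(\gamma)})\geq \limsup_j \mu(K_j^{(1/j)}) = 1$ for every $\gamma>0$. Letting $\gamma\downarrow 0$ and using continuity from above of the finite measure $\mu$ on the decreasing sets $K^{(\gamma)}$, whose intersection is the closed set $K$, we get $\mu(K)=1$.

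Then $\mu(\RR^k\setminus K)=0$, and since $\RR^k\setminus K$ is open, $\supp(\mu)\subseteq K$. This contradicts the hypothesis that the support lies in no affine hyperplane, so suitable $\delta,\gamma$ exist. By shrinking one can also ensure $\delta,\gamma\in(0,1)$. The only real care needed is the compactness step: we must ensure the relevant hyperplanes range over a compact parameter set, which is why we discard hyperplanes far from $S$ (these have measure zero neighbourhoods). The subsequent passage to the limit is a routine uniform convergence argument.
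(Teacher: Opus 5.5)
Your proof is correct and follows the same route as the paper: argue by contradiction, extract a convergent subsequence of hyperplanes by compactness, and show that the limit hyperplane carries full mass. Your limiting step, proving $\mu(K^{(\gamma)})=1$ for each fixed $\gamma>0$ and then using continuity from above, is more careful than the paper's, which instead asserts a further subsequence with nested sets $S_{k_n}^{(1/k_n)}\cap B$ (something not available for a general convergent sequence of hyperplanes); you also justify the compactness step by noting that the relevant hyperplanes meet a bounded set.
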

\begin{proof}
    Assume for contradiction this is false. 
    Then there exists a sequence $(S_n)_{n=1}^{\infty}$ of affine hyperplanes in $\RR^k$ such that $\mu(S_n^{(1/n)}) \geq \mu(\RR^k) - 1/n$. 
    Let $B$ be a ball containing the support of $\mu$. 
    By compactness we can find a subsequence of $S_n$ which converges to some affine hyperplane $S$, and take a further subsequence (call it $(S_{k_n})_{n=1}^{\infty}$) such that $S_{k_n}^{(1/k_n)} \cap B \supseteq S_{k_{n+1}}^{(1/k_{n+1})} \cap B$ for all $n \in \NN$. 
    Therefore since $\mu(S_{k_n}^{(1/{k_n})}) \geq \mu(\RR^k) - 1/{k_n}$ for all $n$, we must have $\mu(S_{k_n}^{(1/{k_n})}) = \mu(\RR^k)$ for all $n$. 
    But this means that $\mu(S) = \mu(\RR^k)$, contradicting our assumption that $\mu$ is not supported in an affine hyperplane. 
\end{proof}

We need the following lemma, which shows that affinely irreducible self-similar measures cannot `see' proper analytic varieties. 
More generally, it proves the same for Cartesian products of self-similar measures, which may not themselves be self-similar. 

\begin{lma}\label{lma: prod of SS}
    Let $\mu$ be a Cartesian product of (possibly different) self-similar measures. 
    Suppose that $\mu$ is supported inside a connected open set $U \subset \RR^k$ and let $f_1,\dotsc,f_n \colon U \to \RR$ be real analytic functions. 
    Consider the analytic subvariety 
    \[ M \coloneqq \{ \bx \in U : f_1(\bx) = \dotsb = f_n(\bx) = 0 \} \] 
    and assume that $\mu(M)>0$. 
    Then there is a (possibly full-dimensional) affine subspace $L$ such that $L \cap U \subseteq M$ and $\mu(L) = 1$. 
\end{lma}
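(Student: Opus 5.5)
The plan is to prove the lemma in two steps. First, a blow-up argument at a density point turns the hypothesis $\mu(M)>0$ into the statement that some nonzero polynomial vanishes on all of $\supp(\mu)$. Second, an algebraic argument based on self-similarity shows that this is impossible unless the polynomial lives off the affine span of $\supp(\mu)$.

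\emph{Setup.} Write $\mu=\mu_1\times\dotsb\times\mu_m$, where $\mu_j$ is self-similar on $\RR^{k_j}$ with attractor $K_j$. Then $\supp(\mu)=K_1\times\dotsb\times K_m$. Let $H_j$ be the affine span of $K_j$ and set $H\coloneqq H_1\times\dotsb\times H_m$; this is the affine span of $\supp(\mu)$, and we take $L\coloneqq H$. Then $\mu(L)=1$ trivially, so it suffices to show that every $f_i$ vanishes on $H\cap U$. Suppose, for contradiction, that some $f=f_i$ does not vanish identically near some point of $\supp(\mu)$ within $H$. The connectedness of $U$ and analyticity should let us propagate this to every point of $\supp(\mu)$; this needs a short argument, since $H\cap U$ need not be connected, and it is one place where care is required.

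\emph{Step 1 (blow-up).} By the Lebesgue density theorem for the product cylinders (which form a differentiation basis for $\mu$), there is $\bx\in M\cap\supp(\mu)$ and product cylinders $\varphi_\ell(\supp\mu)$ containing $\bx$, of diameter tending to $0$, with
\[ \mu(M\cap\varphi_\ell(\supp\mu))/\mu(\varphi_\ell(\supp\mu))\to1. \]
Here $\varphi_\ell=(\varphi_{\ell,1},\dotsc,\varphi_{\ell,m})$ is a product of similarities. By self-similarity, this says $\mu(\{g_\ell=0\})\to1$, where $g_\ell$ is $f\circ\varphi_\ell$ divided by its largest Taylor coefficient of degree at most $D$ (restricted to $H$). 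Here $D$ is chosen, via analyticity and compactness of $\supp(\mu)$, so that some coefficient of degree at most $D$ is bounded below. Passing to a subsequence, $g_\ell\to P$ uniformly on a neighbourhood of $\supp(\mu)$, where $P$ is a nonzero polynomial on $H$ of degree at most $D$. For each $\ell$ the set $\{g_\ell=0\}\cap\supp(\mu)$ is closed, so by upper semicontinuity of $\mu$ on closed sets under Hausdorff limits we get $\mu(\{P=0\})=1$, i.e.\ $P$ vanishes on $\supp(\mu)$. The uneven contraction rates across the product coordinates are harmless here: they only affect which monomials dominate, not the existence of a nonzero limit.

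\emph{Step 2 (no nonzero polynomial on $H$ vanishes on $\supp(\mu)$).} We induct on the degree of $P$. Let $V$ be the space of polynomials on $H$ of degree at most $D$ vanishing on $\supp(\mu)$. Since $\supp(\mu)=\bigcup\varphi(\supp\mu)$ over product maps $\varphi$ of the IFS maps, $V$ is invariant under $p\mapsto p\circ\varphi$. Take $P\in V$ nonzero of minimal degree $e$. For $\bx,\bx'\in\supp(\mu)$, zooming in at $\bx$ as in Step 1 should produce a nonzero polynomial $Q$ of degree at most $e$ such that $\by\mapsto Q(\by-\bx+\mathbf{c})$ lies in $V$ for suitable basepoints $\mathbf{c}$. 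The differences $Q(\cdot-\bx)-Q(\cdot-\bx')$ then lie in $V$ and have degree less than $e$, so by minimality they vanish. Hence $Q$ is invariant under translation by $\supp(\mu)-\supp(\mu)$, which spans $H-H$, so $Q$ is constant. A nonzero constant cannot vanish on a nonempty set, which is a contradiction.

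I expect Step 2 to be the main obstacle: making the zooming produce translates of a fixed polynomial, given the rotations and the different rates in different factors. If this proves awkward, I would instead apply the difference argument directly to $P$, using translation vectors of the form $\varphi(\by)-\by$ coming from iterates of the IFS maps.
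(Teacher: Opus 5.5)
Your blow-up in Step 1 is sound in substance. Step 2, however, carries the whole content of the lemma in your route: $K=\supp(\mu)$ must not lie in the zero set of any nonzero polynomial on its affine span $H$. That step is not proved, and the mechanism you sketch does not work as stated.

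Zoom in at $\bx\in K$ along cylinder maps $\varphi_\ell(\by)=\bx+\delta_\ell B_\ell(\by-\by_\ell)$, choosing the factor cylinders at comparable scales so that $\by_\ell\in K$ and $B_\ell$ ranges over a compact set of invertible linear maps of $H-H$. The limit is $P_{\bx,m}(B(\by-\by_*))$, where $m$ is the order of vanishing of $P$ at $\bx$ and $P_{\bx,m}$ is the lowest-order homogeneous part of $P$ at $\bx$. But $m$, $P_{\bx,m}$, $B$ and $\by_*$ all depend on $\bx$. So the zoom limits at $\bx$ and $\bx'$ are not translates of one fixed $Q$, and their difference need not have degree $<e$. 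Your fallback uses differences $\bt_w-\bt_{w'}$ of words with equal linear part. It works for homogeneous systems, but when the linear parts generate a free semigroup (e.g.\ free rotation groups in $\RR^3$), no two distinct words share a linear part.

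The missing idea is to use the zoom only to compare degrees.
\begin{itemize}
\item Take $P\in V\setminus\{0\}$ of minimal degree $e$. Each $\delta_\ell^{-m}P\circ\varphi_\ell=\sum_{j=m}^{e}\delta_\ell^{j-m}P_{\bx,j}(B_\ell(\cdot-\by_\ell))$ lies in $V$.
\item Since $V$ is finite-dimensional, its limit also lies in $V$, and that limit is nonzero of degree exactly $m$. Minimality forces $m=e$.
\item Hence $P$ vanishes to order $\deg P$ at every point of $K$, i.e.\ $P(\by)=P_e(\by-\bx)$ for all $\bx\in K$, with $P_e$ the top homogeneous part. So the translates you wanted do exist, with $Q=P_e$.
\item Then $P_e$ is invariant under translation by $K-K$, which spans $H-H$. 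The periods of a polynomial form a linear subspace, so $P_e$ is constant. This forces $e=0$, contradicting that $P$ vanishes on $K\neq\varnothing$.
\end{itemize}
Repaired this way, your route (blow-up plus Zariski density of the attractor in its affine span) is a genuine alternative to the paper's. The paper stays analytic: it uses Whitney's decomposition of $M$, near-flatness at small scales and Lemma~\ref{l:tube} to get $\mu(M^{(r)})\ll r^{\eta}$, then restricts to the smallest affine subspace containing $\supp(\mu)$.

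Separately, the propagation step in your setup is not merely delicate but false: the lemma as stated needs $H\cap U$ connected. Take $U=\CC\setminus\{iy:y\leq 0\}$, $f=\arg z$ with values in $(-\pi/2,3\pi/2)$, and $\mu$ the Cantor measure for $z\mapsto z/3\pm 2/3$, supported in $[-1,-1/3]\cup[1/3,1]$. Then $\mu(M)=1/2$. Every affine $L$ with $\mu(L)=1$ contains $\RR$, yet $f(-1)=\pi\neq 0$.

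What your argument (and the paper's) actually shows is that each $f_i|_H$ vanishes on every component of $H\cap U$ containing a density point of $M$. That suffices when $H\cap U$ is connected, e.g.\ $U$ convex or $U=\RR^k$. The paper makes the same tacit assumption when it calls $M\cap L$ a proper subvariety of $L$; its applications ($U=\RR^{Lk}$ for $V_{k,D}$, or only deducing affine reducibility) are unaffected.

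Finally, a minor point in Step 1. With overlapping cylinders, the ratio $\mu(M\cap\varphi_\ell(K))/\mu(\varphi_\ell(K))$ is not what self-similarity turns into $\mu(\{g_\ell=0\})$. Run the density argument on the coding space instead, where martingale convergence gives $\mu(\varphi_{\omega|_n}^{-1}(M))=\mathbb{P}(\pi^{-1}(M)\mid[\omega|_n])\to 1$ for $\mathbb{P}$-a.e.\ $\omega\in\pi^{-1}(M)$.
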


\begin{proof}
    We begin by proving this under the additional assumption that $\mu$ itself is self-similar since this case already contains many of the key ideas. 
    If $M = U$ then we are done, so assume $M \neq U$. 
    A classical result of Whitney~\cite{WhitneyBook,WhitneyAnnals} (see also \cite[Claim~2]{MityaginAnalytic}) tells us that $M$ is at most a countable union of proper analytic submanifolds. 
    We can therefore assume without loss of generality (using the connectedness of $U$) that $M$ is a proper analytic submanifold of dimension $<k$, without singularities. 
    
    We first assume for a contradiction that $\mu$ is affinely irreducible. 
    Then by Lemma~\ref{l:tube}, there exists some small $\gamma>0$ such that for every affine hyperplane $K$ of $\RR^k$ we have $\mu(K^{(\gamma)}) \leq \rho$ for some uniform $\rho \in (0,1)$. 
    Let $r_{\min} \in (0,1)$ be the minimum contraction ratio for the IFS generating $\mu$. 
    The analyticity of $M$ implies that there exists $\delta_0 > 0$ such that for all $\delta \in (0,\delta_0)$ and all $\delta$-balls $B$ intersecting the support of $\mu$ (which is a compact set), $B \cap M$ is contained in the $r_{\min} \gamma \delta / 2$-neighbourhood of some affine hyperplane. 
    Consider the $\delta_0$-branches of $\mu$, which have diameter between $\delta_0 r_{\min}$ and $\delta_0$. 
    Inside a $\delta_0$-ball $B$ containing a $\delta_0$-branch, $B \cap M^{(r_{\min} \gamma \delta_0/2)}$ is contained in the $r_{\min} \gamma \delta_0$-neighbourhood of some affine hyderplane. 
    By rescaling, the mass of this $\delta_0$-branch inside $M^{(r_{\min} \gamma \delta_0/2)}$ is therefore at most $\rho$ times its total mass. 
    We have shown that $\mu(M^{(r_{\min} \gamma \delta_0/2)}) \leq \rho$. 
    Similarly, considering $r_{\min} \gamma \delta_0/2$-branches we see that $\mu(M^{(r_{\min}^2 \gamma^2 \delta_0/4)}) \leq \rho^2$. 
    Iterating, we see that $\mu(M^{(r)}) \ll r^{\eta}$ for some $\eta>0$ and all $r \in (0,1)$. 
    This contradicts the fact that $\mu(M) > 0$. 

    We have shown that $\mu$ must be affinely reducible and thus gives positive measure to some affine hyperplane. 
    We can then find the smallest affine subspace $L$ that contains the support of $\mu$. 
    If $L \cap U \not\subseteq M$, then $M\cap L$ is a proper analytic subvariety of $L$, and $\mu(M\cap L) = \mu(M) > 0$, so we can repeat the argument above to get a contradiction. Therefore we see that $M$ must contain $L \cap U$. This finishes the proof under the assumption that $\mu$ is self-similar. 

    For the general case, let $\delta>0$ be some small number, and write $\mu = \mu_1 \times \dotsb \times \mu_j$, where $j$ is some positive integer which divides $k$ and each $\mu_i$ is a self-similar measure on $\RR^{k/j}$. 
    Consider the $\delta$-branches of the components of $\mu$, each of which has diameter $\asymp\delta$. 
    Then $\mu$ is supported inside the Cartesian products of the $\delta$-branches. 
    We call them $\delta$-product branches, and note that each has the form 
    \[ ((\mu_1)_{\bx \mapsto \delta c_1 \bx} \times \dotsb \times (\mu_j)_{\bx \mapsto \delta c_j \bx}) + (t_1,\dotsc,t_k) \] 
    for some $c_1,\dotsc,c_j \in [r_{\min},1]$ (here $r_{\min}$ is the minimum contraction ratio of all maps generating $\mu_1,\dotsc,\mu_j$), and $t_1,\dotsc,t_k \in \RR$. 

    \begin{claim}\label{c:distortedproducts}
        If $\mu$ is affinely irreducible then there exists $\rho \in (0,1)$ such that for all small enough $\delta > 0$, all diagonal matrices $A$ with entries in $[r_{\min},1]$, and all affine hyperplanes $H \subset \RR^k$, we have 
        \[ \mu_A( H^{(\delta)}) \leq \rho, \]
        where we recall the notation that $\mu_A$ is the pushforward of $\mu$ by $A$. 
    \end{claim}
    \begin{proof}[Proof of Claim~\ref{c:distortedproducts}]
        Assume this were not the case. Then there would exist sequences $(A_n)$ and $(H_n)$ of $k \times k$ diagonal matrices and affine hyperplanes of $\RR^k$ respectively for which the $\mu_{A_n}(H_n^{(1/n)}) > 1-1/n$. 
        By compactness we can assume convergence $A_n \to A$ and $H_n \to H$. 
        Then $\mu_A(H) = 1$, and $\mu_A$ is supported in $H$. But $A$ is affine, so this contradicts the assumption that $\mu$ is affinely irreducible. 
    \end{proof} 
    
    The rest of the arguments are very similar as those in the case where $\mu$ is self-similar (i.e. repeatedly rescale the $\delta$-product branches and apply Claim~\ref{c:distortedproducts}), and we omit further repetitions.
\end{proof}

Next, we make the simple observation that the Cartesian product of affinely irreducible measures is also affinely irreducible. 

\begin{lma}\label{lma: product of linear irr}
    If $\mu_1,\dotsc,\mu_l$ are affinely irreducible Borel probability measures in $\RR^k$, then $\mu_1 \times \dotsb \times \mu_l$ is affinely irreducible in $\RR^{kl}$. 
\end{lma}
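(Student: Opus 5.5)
We argue by induction on $l$, the case $l=1$ being trivial. It therefore suffices to treat two factors: if $\mu$ is an affinely irreducible probability measure on $\RR^{a}$ and $\nu$ one on $\RR^{b}$, then $\mu\times\nu$ is affinely irreducible on $\RR^{a+b}$. Iterating this with $a=k(l-1)$ and $b=k$ gives the statement.

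Let $H\subset\RR^{a+b}$ be an affine hyperplane. Write it as
\[ H=\{(\bx,\by): \langle \bx,\mathbf{u}\rangle+\langle \by,\bv\rangle=c\}, \]
where $(\mathbf{u},\bv)\neq\bzero$. By Fubini,
\[ (\mu\times\nu)(H)=\int_{\RR^a}\nu(H_\bx)\,d\mu(\bx), \qquad H_\bx\coloneqq\{\by:\langle \by,\bv\rangle=c-\langle \bx,\mathbf{u}\rangle\}. \]
We split into two cases according to whether $\bv=\bzero$.

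\emph{Case $\bv\neq\bzero$.} For every $\bx$ the slice $H_\bx$ is an affine hyperplane in $\RR^b$. Affine irreducibility of $\nu$ then gives $\nu(H_\bx)=0$, so the integrand vanishes identically and $(\mu\times\nu)(H)=0$.

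\emph{Case $\bv=\bzero$.} Then $\mathbf{u}\neq\bzero$. Here $H_\bx$ is either all of $\RR^b$ (when $\langle \bx,\mathbf{u}\rangle=c$) or empty. Hence $(\mu\times\nu)(H)=\mu(\{\bx:\langle \bx,\mathbf{u}\rangle=c\})$. The set on the right is an affine hyperplane in $\RR^a$, so this quantity is $0$ by affine irreducibility of $\mu$.

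Nothing in the argument is a real obstacle. The only care needed is that the paper defines hyperplanes with unit normals, so one should normalise $(\mathbf{u},\bv)$ and check that the slices in each case are genuinely affine hyperplanes. Measurability of $\bx\mapsto\nu(H_\bx)$ is standard Fubini, since $H$ is closed.
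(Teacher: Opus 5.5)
Your proof is correct and is essentially the paper's argument: Fubini--Tonelli slicing along a block of coordinates on which the normal vector of $H$ has a nonzero component, so that every slice is an affine hyperplane of measure zero. The paper skips your induction and case split by choosing an index $j$ with $\mathbf{e}_j\neq\bzero$ directly and integrating the indicator of $H$ in the $\bx_j$ variable.
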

\begin{proof}
    Suppose the result does not hold, so there is some affine hyperplane $H \subset \RR^{kl}$ to which $\mu_1 \times \dotsb \times \mu_l$ gives positive measure. 
    Then there are $\mathbf{e}_1,\dotsc,\mathbf{e}_l \in \RR^k$, not all zero (say $\mathbf{e}_j \neq 0$ for some $1 \leq j \leq l$), and $t \in \RR$, such that 
    \[ H = \{(\bx_1,\dotsc,\bx_l) \in \RR^{kl} : \langle \mathbf{e}_1,\bx_1 \rangle + \dotsb + \langle \mathbf{e}_l,\bx_l \rangle = t \}. \]
    Since $\mathbf{e}_j \neq 0$, fibres of $H$ over each $\bx_j$-coordinate (inverse images of the orthogonal projection map to $\bx_j$) are contained in an affine hyperplane in $\RR^k$, so have zero $\mu_j$-measure since $\mu_j$ is affinely irreducible. 
    Therefore by Fubini's / Tonelli's theorem applied to the indicator function of $H$ and the $\bx_j$ variable, we see that $\mu_1 \times \dotsb \times \mu_l (H) = 0$, which contradicts our assumption. 
\end{proof}

\subsection{Further lemmas}
Recall the statement of Theorem~\ref{thm: Lo}. Since $\cA$ is a compact family of functions on $U$, for any compact $K\subset U\subset \mathbb{R}^k$, we consider Taylor expansions of $f\in\cA$ at $\bx\in K$. 
These are sums of monomials of integer degrees. 
\begin{lma}\label{lma: choice of D}
    Let $\cA$ be as in Theorem~\ref{thm: Lo}, so in particular $\cA$ does not contain the zero function. Let $K\subset U$ be compact. 
    Then there exist $D,C>0$ such that for all $f\in\cA$, $\bx \in K$, at least one of the coefficients of $f$ at $\bx$ with degree at most $D$ has absolute value larger than $C$.
\end{lma}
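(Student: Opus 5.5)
We argue by contradiction, combining compactness of $\cA\times K$ with the identity theorem for analytic functions. Suppose the conclusion fails. Then for every $D\in\NN$ and $C=1/D$ we can find $f_D\in\cA$ and $\bx_D\in K$ such that every Taylor coefficient of $f_D$ at $\bx_D$ of degree at most $D$ has absolute value at most $1/D$. Here the Taylor coefficients are $\partial^\alpha f_D(\bx_D)/\alpha!$, taken componentwise for the $d$ components, for multi-indices with $|\alpha|\leq D$. By compactness of $\cA$ in the compact-open topology and compactness of $K$, we pass to a subsequence with $f_D\to f\in\cA$ uniformly on compact subsets of $U$ and $\bx_D\to\bx_*\in K$.

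The key step is to upgrade uniform convergence to convergence of all derivatives. To do this, note that uniform convergence of real analytic functions on compacts does not in general give convergence of derivatives. A real analytic uniform limit need not even be analytic, although here $f\in\cA$ is analytic by assumption. I would handle this in one of two ways.

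\begin{itemize}
\item \textbf{Complexification.} Interpret the compact family (as is implicit in the paper's setting) as having holomorphic extensions to a fixed complex neighbourhood $\widetilde U\supset K$ in $\CC^k$, with local uniform bounds there. Then the Cauchy estimates give uniform convergence of every derivative $\partial^\alpha f_D\to\partial^\alpha f$ on a neighbourhood of $K$.
\item \textbf{Avoiding derivatives.} Alternatively, use only the fact that each fixed $f$ is analytic, together with a quantitative compactness argument: for each fixed $f\in\cA$ there is $D_f$ with some coefficient of degree $\le D_f$ nonvanishing at every point of $K$. I would expect needing the first option in the end.
\end{itemize}

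Given derivative convergence, fix a multi-index $\alpha$. For all $D\geq|\alpha|$ we have $|\partial^\alpha f_D(\bx_D)|\leq \alpha!/D$. Combining uniform convergence of $\partial^\alpha f_D$ near $\bx_*$ with continuity of $\partial^\alpha f$ gives $\partial^\alpha f(\bx_*)=0$. Since $\alpha$ was arbitrary, all Taylor coefficients of $f$ at $\bx_*$ vanish, so $f\equiv 0$ on a neighbourhood of $\bx_*$. Because $U$ is connected and $f$ is real analytic, the identity theorem gives $f\equiv 0$ on $U$. This contradicts the assumption that $\cA$ does not contain the zero function, and the contradiction yields the desired $D$ and $C$.

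The main obstacle is the derivative-convergence step, since the compact-open topology alone controls only values. I would first try the complexification route, so that the Cauchy estimates do the work. If that is unavailable, I would try a more hands-on argument for each $f\in\cA$ separately:

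\begin{enumerate}
\item Show that $m(f,\bx)=\max_{|\alpha|\le D}|\partial^\alpha f(\bx)|/\alpha!$ is bounded below on $K$ by some $2C_f$ for a suitable $D_f$. This follows from compactness of $K$ and lower semicontinuity of the vanishing order.
\item Show that this lower bound persists on a neighbourhood of $f$ in $\cA$, using a finite-dimensional norm-equivalence argument on polynomial truncations.
\end{enumerate}

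The second step is exactly where derivative control is needed, so the first route is preferable.
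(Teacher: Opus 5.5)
Your argument is essentially the paper's: argue by contradiction, extract a limit $f\in\cA$ and a point $\bx\in K$ by compactness, pass the Taylor coefficients to the limit via the multivariable Cauchy integral formula, and conclude $f\equiv 0$ from the connectedness of $U$. You also note that this step needs holomorphic extensions to a common complex neighbourhood with locally uniform convergence. That is exactly what the paper's appeal to the Cauchy formula tacitly assumes, and it is genuinely needed: for instance, $f_n(x)=x-xe^{-(nx)^{2n}}$ converges to $x$ locally uniformly on $\RR$, yet all its Taylor coefficients at $0$ of degree at most $2n$ vanish. So your derivative-free fallback could not succeed, and the complexification route is the right one.
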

\begin{proof}
    Write $\cA$ as $\{f_i\}_{i \in I}$ for some index set $I$. Suppose that the result does not hold. Then there is a sequence $D_i\to\infty, C_i\to 0, I_i\in I, \bx_i\in K$ such that the Taylor expansion of $f_{I_i}$ at $\bx_i$ has all its coefficients for monomials with degree at most $D_i$ with absolute value at most $C_i$. As $\cA$ is a compact family, we can assume the uniform convergence of $f_{I_i}\to f\in \cA$ as well as $\bx_i\to\bx\in K$ as $i \to \infty$. 
    Consider the Taylor expansion of $f$ at $\bx$. 
    By the multivariable Cauchy integral formula~\cite[Theorem~2.2.1]{HormanderComplex}, the Taylor coefficients of $f$ at $\bx$ are the limits of the corresponding coefficients of $f_i$ at $\bx_i$, and all of these limits are zero by assumption. 
    We see that the Taylor expansion of $f$ at $\bx$ is $0$, and this implies (using the connectedness of $U$) that $f$ is identically zero. 
    This contradicts the fact that $\cA$ does not contain the zero function.
\end{proof}

We can now scale the family of functions in $\cA$ such that the constant $C$ from the above lemma is $1$. 
We will assume this for the rest of Section~\ref{s:lo}. 

Let $D$ be as in the above lemma. Let $H_{k,D}$ be the collection of monomials in $k$ variables with degree at most $D$ listed in some fixed order. Let $L$ be the number of such monomials. 
For each $x_1,\dotsc,x_k\in \mathbb{R}$, let
\[
H_{k,D}(x_1,\dotsc,x_k)\in \mathbb{R}^L
\]
be the evaluation of $H_{k,D}$ at $x_1,\dotsc,x_k$. 

Let $\bx_1,\dotsc,\bx_L$ be vectors in $\mathbb{R}^k$. We define $M=M(\bx_1,\dotsc,\bx_k)$ to be the real $L\times L$ matrix with row vectors
\[
H_{k,D}(\bx_1),\dotsc,H_{k,D}(\bx_L).
\]
Consider the function $h_{k,D} \colon \RR^{Lk} \to \RR$ defined by 
\[
h_{k,D} \colon (\bx_1,\dotsc,\bx_L) \mapsto \det M(\bx_1,\dotsc,\bx_L).
\]
\begin{lma}\label{l:detpropervariety}
    The set $V_{k,D} \coloneqq \{h_{k,D}=0\} \subset \RR^{Lk}$ is a proper real algebraic variety. 
\end{lma}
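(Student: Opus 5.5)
It suffices to show that $h_{k,D}$ is a polynomial that is not identically zero. Once this is established, $V_{k,D}$ is the zero set of a single nonzero polynomial on $\RR^{Lk}$ and is therefore a proper real algebraic variety.

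Polynomiality is immediate from the definitions. Each entry of $M(\bx_1,\dotsc,\bx_L)$ is a monomial in the coordinates of one of the $\bx_i$. The Leibniz expansion of the determinant then writes $h_{k,D}$ as a polynomial in the $Lk$ real coordinates of $(\bx_1,\dotsc,\bx_L)$.

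The main step is to exhibit one point where $h_{k,D}\neq 0$. The rows of $M$ are the vectors $H_{k,D}(\bx_i)$. Suppose, for every choice of points, these $L$ rows were linearly dependent. Then the span $W\subseteq\RR^L$ of all vectors $H_{k,D}(\bx)$, as $\bx$ ranges over $\RR^k$, would be a proper subspace. I argue this way: pick $\bx_1,\dotsc,\bx_L$ successively, so that at each stage $H_{k,D}(\bx_j)$ is not in the span of the previous rows. This is possible as long as that span is a proper subspace of $\RR^L$, because the full set $\{H_{k,D}(\bx)\}$ spans $\RR^L$, which is shown next.

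If $W$ were proper, there would be a nonzero vector $\mathbf{c}\in\RR^L$ orthogonal to $W$. Then $\sum_{m\in H_{k,D}} c_m\, m(\bx)=0$ for all $\bx\in\RR^k$. This says a nonzero polynomial of degree at most $D$ vanishes identically on $\RR^k$. That is impossible, since distinct monomials are linearly independent as functions on $\RR^k$. One standard proof uses induction on $k$, viewing the polynomial as a polynomial in $x_k$ with coefficients in $\RR[x_1,\dotsc,x_{k-1}]$ and using that a nonzero one-variable polynomial has finitely many roots. Alternatively, all partial derivatives at $\mathbf{0}$ recover the coefficients $c_m$, which forces $\mathbf{c}=\mathbf{0}$.

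Hence generic points give $\det M\neq 0$, so $h_{k,D}\not\equiv 0$ and $V_{k,D}$ is proper. I do not expect any real obstacle here; the only point that needs care is the monomial independence step. The same reasoning also shows that $V_{k,D}$ has empty interior and Lebesgue measure zero, which is presumably how it will be used later.
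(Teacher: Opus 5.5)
Your proposal is correct and is essentially the paper's argument: linear independence of the monomials means the image of $H_{k,D}$ lies in no hyperplane through the origin, so some choice of $\bx_1,\dotsc,\bx_L$ makes $M(\bx_1,\dotsc,\bx_L)$ nonsingular and hence $h_{k,D}\not\equiv 0$. The only cosmetic difference is that the paper also notes $h_{k,D}$ vanishes at the origin, so $h_{k,D}$ is non-constant and $V_{k,D}$ is nonempty; this extra observation is not needed for properness.
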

\begin{proof}
    It suffices to show that $h_{k,D}$ is not a constant polynomial. 
    Indeed, if it were a constant polynomial, it has to be the zero polynomial since inserting zeros in $h_{k,D}$ will give zero. Consider the image $M_H\subset\mathbb{R}^L$ of the map $H_{k,D} \colon \mathbb{R}^k\to\mathbb{R}^{L}$. 
    Since the monomials forming $H_{k,D}$ are linearly independent, $M_H$ is not contained in any homogeneous hyperplane\footnote{a \emph{homogeneous hyperplane} is an affine hyperplane passing through the origin.}, so there are linearly independent $y_1,\dotsc,y_{L}$ in $M_H$. 
    Therefore there exists a choice $\bx_1,\dotsc,\bx_{L}$ so that the matrix $M(\bx_1,\dotsc,\bx_{L})$ is not singular and so $h_{k,D}$ cannot be constant. 
\end{proof}
\begin{lma}\label{l:onebranchaway}
    There exists $\eta' > 0$ (depending only on $k,D,\mu$) such that for all $\eta \in (0,\eta')$, there exist $\eta$-branches $S_1,\dotsc,S_L$ of $\mu$, such that if $\bx_i\in S_i$ for $i\in\{1,\dotsc,L\}$, then 
    \[
    |h_{k,D}(\bx_1,\dotsc,\bx_L)|\geq \eta'. 
    \]
\end{lma}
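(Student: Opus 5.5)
The plan is to show first that $h_{k,D}$ does not vanish identically on $\supp(\mu)^L$, and then to use continuity and compactness to pass from a single good point to whole small branches with a uniform lower bound.

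\emph{Step 1: a good point.} Suppose $h_{k,D}$ vanished identically on $\supp(\mu)^L$. Then the product measure $\mu^{\times L}$ on $\RR^{Lk}$ gives full mass to the proper algebraic variety $V_{k,D}$; this variety is proper by Lemma~\ref{l:detpropervariety}. Since $\mu$ is affinely irreducible, Lemma~\ref{lma: product of linear irr} shows that $\mu^{\times L}$ is affinely irreducible. It is also a Cartesian product of self-similar measures. Apply Lemma~\ref{lma: prod of SS} with $U=\RR^{Lk}$ and $M=V_{k,D}$. This gives an affine subspace $L'$ with $L'\subseteq V_{k,D}$ and $\mu^{\times L}(L')=1$. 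Affine irreducibility forces $L'=\RR^{Lk}$, so $h_{k,D}\equiv 0$, which contradicts Lemma~\ref{l:detpropervariety}. Hence there is $(\by_1,\dotsc,\by_L)\in\supp(\mu)^L$ with $|h_{k,D}(\by_1,\dotsc,\by_L)| = 3c_0>0$ for some $c_0>0$.

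A more elementary route to the same conclusion is available. If $h_{k,D}$ vanished on $\supp(\mu)^L$, the vectors $H_{k,D}(\bx)$ for $\bx\in\supp(\mu)$ would span a proper subspace of $\RR^L$. Then some nonzero polynomial of degree at most $D$ would vanish on $\supp(\mu)$. Lemma~\ref{lma: prod of SS}, applied to $\mu$ alone together with affine irreducibility, then forces that polynomial to be identically zero, again a contradiction. I would present whichever of the two routes reads more cleanly.

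\emph{Step 2: from points to branches.} The function $h_{k,D}$ is a polynomial, so it is uniformly continuous on the compact set $(\text{conv}\,\supp\mu)^L$. Choose $r>0$ such that $|\bx_i-\by_i|\le r$ for all $i$ implies $|h_{k,D}(\bx_1,\dotsc,\bx_L)|\ge c_0$. Set $\eta'=\min\{r,c_0\}$.

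For any $\eta\in(0,\eta')$ and each $i$, the $\eta$-branches cover $\supp(\mu)\ni \by_i$. So there is an $\eta$-branch $S_i$ whose support contains $\by_i$. Its support has diameter at most $\eta<r$, so every $\bx_i\in S_i$ satisfies $|\bx_i-\by_i|<r$. Therefore $|h_{k,D}(\bx_1,\dotsc,\bx_L)|\ge c_0\ge\eta'$.

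The constant depends only on $k$, $D$ and $\mu$, since $\by_i$, $r$ and $c_0$ are determined by these. Here "$\bx_i\in S_i$" is read as $\bx_i$ lying in the support of the branch.

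\emph{Main obstacle.} The substantive step is Step 1, namely showing that the determinant is not identically zero on $\supp(\mu)^L$. This is exactly where affine irreducibility and Lemma~\ref{lma: prod of SS} are needed to rule out $\supp(\mu)$ lying in an algebraic hypersurface of degree at most $D$. Step 2 is routine compactness.
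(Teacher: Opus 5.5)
Your proof is correct, and your first route is essentially the paper's. The paper also derives $\mu^L(V_{k,D})=0$ from Lemmas~\ref{lma: prod of SS}, \ref{lma: product of linear irr} and~\ref{l:detpropervariety}. It then uses compactness of $\supp(\mu)^L$ and continuity of $h_{k,D}$ on the complement of a neighbourhood of $V_{k,D}$. Your Step~2 packages this more directly: you fix one good point and take, for each $i$, an $\eta$-branch of $\mu$ through $\by_i$. This sidesteps the paper's passage through an $\eta$-branch of $\mu^L$ being a product of $\eta$-branches of $\mu$.

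Your alternative route for Step~1 is genuinely different and is the one worth presenting. The linear-algebra observation shows that $h_{k,D}\equiv 0$ on $\supp(\mu)^L$ exactly when $\supp(\mu)$ lies in the zero set of a nonzero polynomial of degree at most $D$. So you only need the self-similar case of Lemma~\ref{lma: prod of SS}, which is the case the paper proves in full. You do not need Lemma~\ref{lma: product of linear irr}, nor the Cartesian-product case of Lemma~\ref{lma: prod of SS}, which the paper only sketches with ``we omit further repetitions''. The paper's route buys nothing extra here, since only the non-vanishing of $h_{k,D}$ at a single point of $\supp(\mu)^L$ is used.
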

\begin{proof}
    Using Lemmas~\ref{lma: prod of SS}, \ref{lma: product of linear irr} and~\ref{l:detpropervariety}, we see that $\mu^{L}(V_{k,D})=0$ where $\mu^L$ is the $L$-fold Cartesian product of $\mu$. 
    Therefore the support $K$ of $\mu^L$ is not contained in $V_{k,D}$, so there exists $\eta'_1 > 0$ such that the support of $\mu^L$ is not contained in $V_{k,D}^{(\eta'_1)}$. 
    Therefore there exists $\eta'_2 \in (0,\eta'_1)$ such that for all $\eta \in (0,\eta'_2)$, at least one $\eta$-branch of $\mu^L$ is disjoint from $V^{(\eta'_2)}_{k,D}$. 
    Notice that $K$ is the $L$-fold Cartesian product of $\supp(\mu)$. 
    Since $\mu$ is self-similar, we see that the $\eta$-branch of $\mu^L$ which is disjoint from $V^{(\eta'_2)}_{k,D}$ is the Cartesian product of $L$ many $\eta$-branches $S_1,\dotsc,S_L$ of $\mu$. 
    Since $K$ is compact and $h_{k,D}$ is continuous, $h_{k,D}$ attains some minimum $\eta'_3 > 0$ on $K \setminus V^{(\eta'_2)}_{k,D}$. 
    Setting $\eta' = \min\{\eta'_1,\eta'_2,\eta'_3\} > 0$, and noting that $\eta'$ depends only on $k,D,\mu$, completes the proof. 
\end{proof}

\subsection{Polynomials}
We first prove Theorem~\ref{thm: Lo} in the case when $\cA$ is a collection of polynomials with uniformly bounded degree. The proof already features the key ideas. 

\begin{prop}
Let $k,d,D \in \mathbb{N}$ and $\varepsilon > 0$, and let $\mu$ be an affinely irreducible self-similar measure in $\mathbb{R}^k$. Then there exists a positive $\beta = \beta(k,d,\mu,D,\varepsilon)$ such that the following holds. 
Let $\cP$ be any compact family of polynomials $\RR^k \to \RR^d$ with degree at most $D$, and suppose further that $\cP$ does not contain the zero polynomial. 
Then there is $c > 0$ such that for all $P\in \cP,\delta>0$, 
\[
\mu(\{|P|<\delta\}^{(\delta^\varepsilon)})\leq c\delta^{\beta}.
\]
\end{prop}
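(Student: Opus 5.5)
We prove the bound by multiscale iteration: at every scale, a fixed proportion of each branch of $\mu$ lies outside the bad set. The uniform input is Lemma~\ref{l:onebranchaway}. Inside every branch there are $L$ sub-branches $S_1,\dotsc,S_L$, at relative scale $\eta$, on which the interpolation determinant $h_{k,D}$ is bounded below by $\eta'$. Since every polynomial of degree at most $D$ is determined by its values at $L$ points with nonsingular evaluation matrix, this prevents a polynomial from being small on all the $S_i$ unless all its coefficients are small. Because $\mu$ is self-similar, the same configuration appears, after rescaling, inside every $r$-branch. The constants $\eta,\eta'$ depend only on $k,D,\mu$. This independence from $P$ is what gives uniformity over the family $\cP$.

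\emph{Step 1 (normalisation).} By compactness of $\cP$ and Lemma~\ref{lma: choice of D} (in the polynomial case just compactness in the finite-dimensional coefficient space), every $P\in\cP$ has some coefficient of absolute value at least $C>0$. Coefficients here means those at a reference point, and all of them are bounded by a constant $C'$. We may treat each component $P_j$ separately, since $\{|P|<\delta\}\subseteq\{|P_j|<\delta\}$. We pick $j$ so that $P_j$ carries a coefficient of size at least $C/\sqrt d$. So reduce to $d=1$.

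\emph{Step 2 (one-scale non-concentration).} Fix an $r$-branch $\mu_{\omega}$ with affine parametrisation $\phi(\bx)=\bt+rO\bx$. Then $Q\coloneqq P\circ\phi$ is a polynomial of degree at most $D$. Its coefficients are bounded above by $O(1)$, and its largest coefficient is at least $\max_{|\alpha|\le D}|\partial^\alpha P(\phi(0))|\,r^{|\alpha|}/\alpha!$, up to constants.

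Choose $\bx_i\in S_i$ and write $\mathbf q$ for the coefficient vector of $Q$. Inverting $M(\bx_1,\dotsc,\bx_L)$ with Cramer's rule and $|\det|\ge\eta'$ gives $|\mathbf q|\ll_{\eta'}\max_i|Q(\bx_i)|$. Let $\lambda(\omega)\coloneqq|\mathbf q|$ be the local size of $P$ on that branch. Then on some $S_i$ we have $|Q|\gg\lambda(\omega)$. Moreover $|\nabla Q|\ll\lambda(\omega)$ on a fixed neighbourhood, so $|Q|\gg\lambda(\omega)$ persists on an $O(1)$-neighbourhood of $S_i$ in rescaled coordinates.

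Pulling back, $|P|\ge c_0\lambda(\omega)$ on the $c_1 r$-neighbourhood of the corresponding sub-branch. That sub-branch carries a fixed fraction $p_{\min}^{O(1)}$ of the mass of $\mu_\omega$. Hence, whenever $\delta\le c_0\lambda(\omega)$ and $\delta^\varepsilon\le c_1 r$, the set $\{|P|<\delta\}^{(\delta^\varepsilon)}$ misses a proportion $1-\rho$ of the branch, with $\rho<1$ uniform.

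\emph{Step 3 (iteration and the main obstacle).} The difficulty is that $\lambda(\omega)$ decays as the branches shrink. Near a point where all low-order derivatives of $P$ vanish, the size is only about $r^{D}$ times the lower bound from Step 1, since some Taylor coefficient of order at most $D$ at that point remains at least comparable to $C$. More precisely, $\lambda(\omega)\gg r^D$ uniformly, because the coefficients of $P$ at $\phi(0)$ are all $O(1)$ and one of them is at least $C$. This rests on the fact that re-expanding a polynomial at a nearby point is an invertible unipotent map on coefficients with bounded norm over a compact set.

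Therefore run the iteration at scales $r_m=\eta^m$ for $m\le m_0$. We need $\delta\le c_0 r_m^D$ and $\delta^\varepsilon\le c_1 r_m$, and the first condition is the binding one when $D\ge 1/\varepsilon$. So take $m_0\asymp \log(1/\delta)/(D\log(1/\eta))$; both conditions then hold for small $\delta$, the $\delta^\varepsilon$ one being weaker than needed. Applying Step 2 inside every surviving branch at each of the $m_0$ levels multiplies the remaining mass by $\rho$ each time. This gives $\mu(\{|P|<\delta\}^{(\delta^\varepsilon)})\le\rho^{m_0}=\delta^{\beta}$ with $\beta\asymp\log(1/\rho)/(D\log(1/\eta))>0$, which depends only on $k,D,\mu$. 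The constant $c$ absorbs large $\delta$.

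I expect the delicate point to be the bookkeeping that makes $\lambda(\omega)\gg r^D$ uniform over $\cP$ and over all branch positions. That is where compactness of $\cP$ and the exclusion of the zero polynomial enter, together with checking that the neighbourhood $\delta^\varepsilon$ is dominated by $c_1r_m$ throughout.
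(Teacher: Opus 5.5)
Your argument is the paper's argument: interpolation at $L$ points from Lemma~\ref{l:onebranchaway}, inversion of $M(\bx_1,\dotsc,\bx_L)$ by Cramer's rule, the lower bound $\gg r^D$ on the rescaled coefficients coming from the normalisation, and an iteration over $\asymp\log(1/\delta)$ levels. Your reduction to a single component and the re-expansion argument are harmless variants of the paper's use of Lemma~\ref{lma: choice of D} at every point of $\supp(\mu)$. Step~2 is in fact more careful than the paper at one point. Your gradient bound $|\nabla Q|\ll\lambda(\omega)$ makes the good sub-branch avoid the whole $\delta^{\varepsilon}$-neighbourhood of $\{|P|<\delta\}$. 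The paper only checks that it avoids $\{|P|<\delta\}$ itself.

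The error is in Step~3 when $\varepsilon<1/D$. In that regime the binding condition is $\delta^{\varepsilon}\le c_1 r_m$, not $\delta\le c_0 r_m^D$. With $m_0\asymp\log(1/\delta)/(D\log(1/\eta))$ you get $r_{m_0}\asymp\delta^{1/D}$, which is much smaller than $\delta^{\varepsilon}$. So your assertion that both conditions then hold is false, and the good sub-branches at the last levels need not avoid the thickened set.

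Consequently your conclusion that $\beta$ depends only on $k,D,\mu$ is false. Take $k=d=D=1$, $\mu$ the Cantor--Lebesgue measure and $\cP=\{x\mapsto x\}$. Then $\{|P|<\delta\}^{(\delta^{\varepsilon})}\supseteq[0,\delta^{\varepsilon})$ has $\mu$-measure $\gg\delta^{\varepsilon\log 2/\log 3}$, which forces $\beta\le\varepsilon\log 2/\log 3$.

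The repair is to stop at $m_0\asymp\min\{1/D,\varepsilon\}\log(1/\delta)/\log(1/\eta)$. This gives $\beta\asymp\min\{1/D,\varepsilon\}\log(1/\rho)/\log(1/\eta)$, and it is exactly what the paper does when it shortens $t$ for $\varepsilon<\varepsilon_0$. Strictly, after $m$ levels the branch scales are only known to lie in $[(\eta r_{\min})^m,\eta^m]$, so $\log(1/\eta)$ should be replaced by $\log(1/(\eta r_{\min}))$; this affects only constants.
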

\begin{proof}
    As discussed above, since $\cP$ does not contain the zero polynomial, using Lemma~\ref{lma: choice of D} we can assume that, for any fixed compact set $K$, the Taylor expansion of $P\in\cP$ at each $\bx\in K$ has at least one coefficient larger than one. From now on, we let $K=\supp(\mu)$, which is compact.

    It clearly suffices to prove the result for $\delta < 1$. 
    Let $0 < \delta < \rho < 1$. 
    Consider a $\rho$-branch of $\mu$. Let $B=B_{\rho}$ be a ball of radius $\asymp \rho$ that contains this $\rho$-branch. Without loss of generality, we take $B$ to be centred at the origin. 
    Let $\eta' > 0$ (depending on $k$, $D$, $\mu$) be as in Lemma~\ref{l:onebranchaway}. 
    Let $\eta \in (0,\eta')$, and by Lemma~\ref{l:onebranchaway} find $\eta$-branches $S_1,\dotsc,S_{L}$ of $\mu$ such that for all $\bx_j\in S_j$, 
    \begin{equation}\label{e:determinantlarge}
    |h_{k,D}(\bx_1,\dotsc,\bx_{L})|\geq \eta'.
    \end{equation}
    Assume that there exists $P \in \cP$ such that all $\eta\rho$-branches of $\mu$ in $B$ intersect $\{|P|<\delta\}$ nontrivially. 
    We write $P$ as 
    \[
    P(\bx_1,\dotsc,\bx_k)=\sum_{I} c_I T^I
    \]
    where $c_I\in\mathbb{R}$ and $T^I$ is a monomial with respect to $\bx_1,\dotsc,\bx_k$ of degree $|I|$. 
    For convenience, we rescale $B$ to be the unit ball. Then $\eta\rho$-branches of $\mu$ in $B$ become $\eta$-branches of $\mu$ after the rescaling. The polynomial $P$ becomes
    \[
    P_\rho(\bx_1,\dotsc,\bx_k) \coloneqq P(\rho \bx_1 ,\dotsc,\rho \bx_k ) = \sum_{I} \rho^{|I|} c_{I} T^I. 
    \]
    
    By our assumption that all $\eta\rho$-branches of $\mu$ in our $\rho$-branch intersect $\{|P|<\delta\}$ nontrivially, we can find points $\bx_1 \in S_1, \dotsc, \bx_L \in S_L$ such that $|P_\rho(\bx_j)|<\delta$ for all $j$. 
    Then 
    \[
    M(\bx_1,\dotsc,\bx_{L}) \bc = \by
    \]
    where $\bc$ is the real column vector of length $L$ consisting of the coefficients $\rho^{|I|} c_I$ of $P_\rho$ and $\by$ is a column vector of length $L$ with $\|\by\|_\infty < \delta$. 
    But $M(\bx_1,\dotsc,\bx_{L})$ is invertible by~\eqref{e:determinantlarge}, so 
    \[
    \bc=M^{-1}\by.
    \]
    By Cramer's rule, we have
    \[
    M^{-1} = \frac{\mathrm{adj}M}{h_{k,D}(\bx_1,\dotsc,\bx_{L})}.
    \]
    The matrix $\mathrm{adj}M$ has uniformly bounded entries, so using~\eqref{e:determinantlarge} with the fact that $\bx_j\in S_j$ for all $j$, we see that all entries of $M^{-1}$ are $O(1)$. 
    Since $\eta'$ depends only on $k,D,\mu$, this $O(1)$ constant depends only on $k,D,\mu$ and not on the choice of $\cP$. 
    We see that all entries of $\bc$ are $O(\delta)$. 
    For any one such entry, we have $\rho^{|I|} c_I = O(\delta)$ and this implies that
    \[
    c_I=O(\delta/\rho^{|I|}).
    \]
    Since at least one $c_I$ is larger than one, we see that there is a constant $c'>0$ so that for all $\delta \in (0,1)$ and $\rho \in (c' \delta^{1/D},1)$, inside any $\rho$-branch of $\mu$, for each $P\in\cP$, at least one $\eta\rho$-branch of $\mu$ is disjoint with $\{|P|<\delta\}$. 

    Fix one $P$ and fix $\delta \in (0,1)$. We can apply the above conclusion by first considering $\eta$-branches of $\mu$ and for each $\eta$-branch intersecting $\{|P|<\delta\}$, we consider $\eta^2$-branches inside this $\eta$-branch. 
    We repeat this procedure $t$ times for some $t$ with $c' \delta^{1/D} \leq \eta^{t} < c' \eta^{-1} \delta^{1/D}$. 
    Thus $t\gg |\log \delta|$. Consider the $\eta$-branches of $\mu$ and let $p$ be the smallest probability weight of such branches. We then see that
    \begin{align}\label{eq: uniform decay}
    \mu(\{|P|<\delta\}^{(\eta^{t})})\ll (1-p)^t\ll \delta^{\beta}.
    \end{align}
    for some $\beta>0$, uniformly for all $P\in\cP$, $\delta \in (0,1)$. 
    After matching the parameters, this finishes the proof of the result for some specific $\varepsilon_0$. Then the result holds automatically for all $\varepsilon \geq \varepsilon_0$. 
    To see the result for each $0 < \varepsilon < \varepsilon_0$, observe that in~\eqref{eq: uniform decay}, we can choose possibly smaller values for $t$\footnote{Indeed, the real requirement for $t$ is $\eta^t>c'\delta^{1/D}$.}, with $t\gg |\log \delta|$\footnote{For example $t \asymp \frac{|\log(\varepsilon/2)|}{|\log \eta|} |\log \delta|$ will work.} and the implicit multiplicative constant can be arbitrarily small. 
\end{proof}

\subsection{Analytic functions}
Next, we prove the following uniform \L ojasiewicz type result for families of analytic functions, which gives control on the exponent $\beta$. 
\begin{prop}\label{p:quanifiedlo}
    Let $k,d,D \in \NN$ and $\varepsilon > 0$. 
    Let $\mu$ be an affinely irreducible self-similar measure on $\mathbb{R}^k$ supported inside some connected open set $U \subseteq \RR^k$. 
    Then there exists $\beta = \beta(k,d,D,\mu,U,\varepsilon) > 0$ such that the following holds. 
    Let $C>0$ and let $\cA$ be a compact family of analytic functions $U \to \RR^d$ such that for all $f \in \cA$ and $\bx \in \supp(\mu)$, at least one of the coefficients of $f$ at $\bx$ with degree at most $D$ has absolute value larger than $C$. 
    Then there exists $c>0$ such that for all $f\in \cA$ and $\delta>0$, one has
    \[
    \mu(\{|f|<\delta\}^{(\delta^{\varepsilon})}) \leq c \delta^\beta.
    \]
\end{prop}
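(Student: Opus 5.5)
The plan is to reduce to the polynomial case just proved by Taylor-truncating each $f\in\cA$ at degree $D$ on small balls around points of $\supp(\mu)$. The remainder is controlled uniformly over $\cA$ by compactness. The structure of the polynomial argument then goes through with only an error term to absorb.

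First I would fix a compact neighbourhood $K'$ of $\supp(\mu)$ inside $U$. Since $\cA$ is compact in the compact-open topology and consists of real analytic functions, I would argue as in Lemma~\ref{lma: choice of D}, via the Cauchy integral formula on a uniform polydisc. Normal-family compactness gives uniform holomorphic extensions: there is $r_0>0$ such that every $f\in\cA$ extends holomorphically to the complex $r_0$-polydisc around each $\bx\in K'$, with a uniform bound $M$. Consequently, for $\bx\in\supp(\mu)$ and $|\by-\bx|\le \rho\le r_0/2$,
\[ f(\by)=T_{\bx}f(\by)+O(M\rho^{D+1}), \]
where $T_{\bx}f$ is the degree-$D$ Taylor polynomial at $\bx$. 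Its coefficients are uniformly bounded, and by hypothesis at least one coefficient of degree $\le D$ has absolute value $>C$. (Making this compactness-to-uniform-extension step rigorous is the main obstacle; if it is not directly available from Definition~\ref{d:compact}, I would prove it by a contradiction-and-subsequence argument as in Lemma~\ref{lma: choice of D}, combined with Cauchy estimates.)

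Next I would rerun the single-scale step of the polynomial proof. Take a $\rho$-branch of $\mu$ contained in a ball $B$ of radius $\asymp\rho$ centred at a point $\bx\in\supp(\mu)$, and let $S_1,\dotsc,S_L$ be the $\eta$-branches from Lemma~\ref{l:onebranchaway}; importantly, $\eta,\eta'$ depend only on $k,D,\mu$. Suppose every $\eta\rho$-branch in $B$ meets $\{|f|<\delta\}$. Rescaling $B$ to the unit ball and choosing points in the rescaled $S_j$, I get $M(\bx_1,\dotsc,\bx_L)\bc=\by$ with $\|\by\|_\infty\ll \delta+M\rho^{D+1}$. Cramer's rule and \eqref{e:determinantlarge} then give $\rho^{|I|}|c_I|\ll \delta+\rho^{D+1}$ for every coefficient. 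The coefficient with $|c_I|>C$ and $|I|\le D$ yields $C\rho^{D}\le C\rho^{|I|}\ll\delta+\rho^{D+1}$. This is impossible once $\rho\le\rho_0$ (small, to kill the remainder) and $\rho\ge c'\delta^{1/D}$. So for every $\rho\in(c'\delta^{1/D},\rho_0)$, every $\rho$-branch contains an $\eta\rho$-branch disjoint from $\{|f|<\delta\}$. The constants involved depend only on $k,d,D,\mu,C,M$, which is uniform over $\cA$.

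Finally I would iterate exactly as in \eqref{eq: uniform decay}. Start from the (finitely many) $\rho_0$-branches and pass to $\eta$-sub-branches $t$ times, stopping when $\eta^t\rho_0\asymp c'\delta^{1/D}$. This gives $\mu(\{|f|<\delta\}^{(\eta^t\rho_0)})\ll(1-p)^t\ll\delta^{\beta}$, where $p$ is the minimal weight of an $\eta$-branch. I would check that the neighbourhood is handled correctly: a surviving $\eta\rho$-branch is disjoint from $\{|f|<\delta\}$ but may still meet its $\delta^\varepsilon$-neighbourhood. To deal with this, I would stop the iteration at scale $\delta^{\varepsilon}$ rather than $\delta^{1/D}$ when $\varepsilon<1/D$, taking $t\asymp|\log\delta|$ with a smaller constant as in the footnotes of the polynomial proof. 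At each step I would instead discard branches meeting the $\delta^\varepsilon$-neighbourhood, using that a $\rho$-branch with $\rho\ge\delta^{\varepsilon/2}$ meets that neighbourhood only if its $2\rho$-enlargement meets $\{|f|<\delta\}$; the same single-scale step applies there. For $\varepsilon\ge 1/D$ the conclusion is immediate from the case of smaller $\varepsilon$. The exponent $\beta$ depends only on $p,\eta,D,\varepsilon$, so it is independent of $C$ and $\cA$, while $c$ absorbs $\rho_0$, $M$ and $C$.
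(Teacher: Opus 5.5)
Your argument follows the paper's proof: truncate at degree $D$, apply Cramer's rule on the branches of Lemma~\ref{l:onebranchaway} with error $\delta+\rho^{D+1}$, deduce that for $c'\delta^{1/D}\le\rho\le\rho_0$ every $\rho$-branch has an $\eta\rho$-sub-branch missing $\{|f|<\delta\}$, and iterate. The genuine gap is the step you flag as the main obstacle, and it cannot be closed the way you suggest. Compactness in the sense of Definition~\ref{d:compact} means locally uniform convergence on the \emph{real} domain. It controls no derivatives. Normal-family arguments presuppose holomorphy and boundedness on a common complex domain, which is exactly what is missing. For example, $1+\frac1n(1+n^2x^2)^{-1}\to 1$ uniformly on $\RR$, but its radius of convergence at $0$ is $1/n$.

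In fact the conclusion itself fails under that definition. Let $\mu$ be Lebesgue measure on $[0,1]$, $U=(-1,2)$, $f_\infty(x)=x$, and $f_n(x)=x+\frac2n\sin(\omega_n x)$ with $\omega_n=4\pi e^{\varepsilon n}$, $n$ large. This family is compact for the compact-open topology and satisfies the coefficient hypothesis with $D=2$, $C=\frac14$: wherever $|f_n'(x)|\le\frac14$ one has $|\cos(\omega_nx)|\le\frac{5n}{8\omega_n}$, hence $|f_n''(x)|/2\ge\omega_n^2/(2n)$. Yet $f_n$ changes sign on every interval of length $2\pi/\omega_n$ inside $[0,1/n]$. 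So for $\delta_n=e^{-n}$ the set $\{|f_n|<\delta_n\}^{(\delta_n^{\varepsilon})}$ contains $[0,1/n]$, and $1/n$ is not $O(\delta_n^{\beta})$ for any $\beta>0$.

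The paper's proof has the same hole. It asserts a uniform radius of convergence $\rho_0$ ``because $\cA$ and $\supp(\mu)$ are compact''. The argument of Lemma~\ref{lma: choice of D} that you want to imitate has the same defect, since it needs Taylor coefficients to converge. The remedy is to strengthen the hypothesis rather than derive it. Assume the members of $\cA$ extend holomorphically to a fixed complex neighbourhood of $\supp(\mu)$ in $\CC^k$, and that $\cA$ is compact for locally uniform convergence there. Cauchy estimates on a fixed polydisc then give your $r_0$, $M$ and the $O(M\rho^{D+1})$ remainder. This stronger compactness does hold for the families $\{P_\bv-\by\}$ and $\{|P_\bv|^2-r^2\}$ to which the paper applies the result.

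Your worry about the $\delta^{\varepsilon}$-neighbourhood is justified. The paper writes $\mu(\{|P|<\delta\}^{(\eta^t)})\ll(1-p)^t$, although its single-scale step only produces sub-branches missing $\{|P|<\delta\}$ itself. Discarding branches that meet the neighbourhood is the right fix, but a $2\rho$-enlargement is too coarse. After rescaling it fattens the $S_j$ by an amount comparable to their own size, where \eqref{e:determinantlarge} says nothing. Use instead that $\rho\ge\delta^{\varepsilon/2}$ gives $\delta^{\varepsilon}\le\rho^2$, which is a fattening of at most $\rho\le\rho_0$ after rescaling by $\rho$. Since $h_{k,D}$ is a polynomial, $|h_{k,D}|\ge\eta'/2$ on a $\tau$-neighbourhood of $S_1\times\dotsb\times S_L$ for some $\tau=\tau(k,D,\mu)>0$. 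With $\rho_0\le\tau$, the single-scale step goes through for points near the $S_j$. Stopping the iteration at scale $\max\{c'\delta^{1/D},\delta^{\varepsilon/2}\}$ (not $\delta^{\varepsilon}$) then yields $\beta$ depending only on $k,D,\mu,\varepsilon$.
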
 
\begin{proof}
We write each $f\in \cA$ as
\[
f(\bx_1,\dotsc,\bx_k)=\sum_{|I|\leq D} c_I T^I+\sum_{|I|> D} c_I T^I.
\]
Because $\cA$ and $\mbox{supp}(\mu)$ are compact, there exists $\rho_0$ such that each $f \in \cA$ has radius of convergence at least $\rho_0$ at each $\bx \in \mbox{supp}(\mu)$. 
Let $\rho \in (0,\rho_0)$, consider a $\rho$-ball contained in $U$, and assume without loss of generality that this ball is centred at the origin. 
We can then write
\[
f_\rho(\bx_1,\dotsc,\bx_k)=\sum_{|I|\leq D} c_I \rho^{|I|}T^I+\sum_{|I|> D} c_I \rho^{|I|} T^I,
\]
which is well-defined on the unit ball. 
On the unit ball, the higher order part is bounded uniformly by $O(\rho^{D+1})$ due to Taylor's theorem and the multivariable Cauchy integral formula.

From here, the argument is similar to that of the polynomial case. 
Indeed, if in some $\rho$-branch of $\mu$, for some $f\in\cA$, all its $\eta\rho$-branches intersect $\{|f|<\delta\}$ nontrivially, we obtain
\[
M(\bx_1,\dotsc,\bx_{L}) \bc =\by+O(\rho^{D+1})
\]
for some $\bx_1,\dotsc,\bx_{L}$ which lie in distinct $\eta$-branches and satisfy~\eqref{e:determinantlarge}, and some $\by$ with $\|\by\|_\infty<\delta$. 
Here, $\bc$ is the column real vector consisting of the coefficients $\rho^{|I|}c_I$ with $|I|\leq D$. 
The extra error $O(\rho^{D+1})$ comes from the tail sum of $f_\rho$. 
Then as in the polynomial case, we deduce that for $I$ with $|I|\leq D$ and $\rho\gg \delta^{1/(D+1)}$, 
\[
c_I=O(\rho^{D+1}/\rho^{D})=O(\rho).
\]
We needed $\rho\gg \delta^{1/(D+1)}$ to make sure that $\rho^{D+1}\gg \delta$ so that $\by+O(\rho^{D+1})$ is $O(\rho^{D+1})$. 
Knowing that $c_I$ (for some $I$ with $|I| \leq D$) is $\gg 1$, we conclude the result in the same way as in the polynomial case. 
\end{proof}
\begin{proof}[Proof of Theorem~\ref{thm: Lo}]
    In the setting of the statement of Theorem~\ref{thm: Lo}, for the family $\cA$ and $K = \supp(\mu)$ we can apply Lemma~\ref{lma: choice of D} to get some integer $D$. 
    From here, Theorem~\ref{thm: Lo} follows from Proposition~\ref{p:quanifiedlo} applied with this value of $D$. 
\end{proof}

\section{Fourier decay outside sparse frequencies: Theorem \ref{thm: AFD}}\label{s:afd}

Recall the definition of Fourier decay outside sparse frequencies from Definition~\ref{d:afd}. 
Since the Fourier transform of $\mu$ is Lipschitz, this property is equivalent to saying that $\{\bxi \in \RR^k:|\widehat{\mu}(\bxi)|\geq R^{-\delta}, |\bxi|<R\}$ can be covered by $\ll R^\epsilon$ balls of radius $1$ in $\RR^k$. 
For the proof of Theorem~\ref{thm: AFD}, we make the following definition, giving a condition which is similar to Khalil's uniform affine non-concentration condition~\cite[(1.1)]{Khalil}. Our definition is discussed in~\cite[Remark~1.8~(2) and Remark~6.5~(1)]{Khalil}. 
\begin{defn}\label{d:inner}
    We say a Borel probability measure $\mu$ on $\RR^k$ is \emph{inner-affinely non-concentrated} if there exist $c\geq 1$ and $\phi \colon (0,1) \to \RR$ with $\phi(\varepsilon) \to 0$ as $\varepsilon \to 0$, such that for all $\bx \in \supp(\mu)$, $0 < r < 1$, $\varepsilon \in (0,1)$ and all affine hyperplanes $W \subset \RR^k$, 
    \begin{equation}\label{e:innerdef}
        \mu(W^{(\varepsilon r)} \cap B(\bx,r)) \leq \phi(\varepsilon) \mu(B(\bx,cr)),
    \end{equation}
    where $W^{(\varepsilon r)}$ is the $\varepsilon r$-neighbourhood of $W$. 
\end{defn}

We prove the following result about self-similar measures. The proof uses a somewhat similar strategy to the proof of \cite[Proposition~2.2]{FL}, and Lemma~\ref{lma: prod of SS}. 
\begin{thm}\label{thm: selfsiminner}
    Every affinely irreducible self-similar measure $\mu$ on $\RR^k$ is inner-affinely non-concentrated. 
\end{thm}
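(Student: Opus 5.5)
Given $\bx \in \supp(\mu)$, $0<r<1$, $\varepsilon \in (0,1)$ and an affine hyperplane $W$, the plan is to decompose $\mu \cap B(\bx,r)$ into branches at scale comparable to $r$, and then apply Lemma~\ref{l:tube} iteratively inside each branch, as in the proof of Lemma~\ref{lma: prod of SS}.

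First, rescaled copies of $\mu$ inherit a uniform tube bound. By Lemma~\ref{l:tube} there are $\gamma,\delta_1\in(0,1)$ with $\mu(K^{(\gamma)})\le (1-\delta_1)$ for every affine hyperplane $K$. If $\mu_{\omega}$ is a branch with contraction ratio $r_\omega$ and similarity $g_\omega$, then $\mu_\omega = |\mu_\omega| \, \mu_{g_\omega}$. Since $g_\omega$ maps hyperplanes to hyperplanes, this gives
\[ \mu_\omega(K^{(\gamma r_\omega)}) \le (1-\delta_1)|\mu_\omega|. \]
Iterating this at scales $r_\omega, r_\omega r_{\min}\gamma,\dots$ as in the proof of Lemma~\ref{lma: prod of SS} yields
\[ \mu_\omega(K^{(s r_\omega)}) \le C s^{\eta}|\mu_\omega| \]
for all $s\in(0,1)$, with $C,\eta>0$ uniform over branches and hyperplanes. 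The point of the iteration is that a thin slab meets each sub-branch in a slab of the same relative thinness, and the sub-branches exhaust $\mu_\omega$. At each step the mass fraction surviving within the slab is at most $1-\delta_1$ (or at most $\rho$ from the refined tube bound). So $\log(1/s)$ steps give a power saving.

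Next, pass from the ball to branches. Let $\Lambda_r$ be the set of $r$-branches of $\mu$, meaning the branches $\mu_{\omega,n}$ with $2^{-n}\asymp r$ and diameter between $r_{\min} r$ and $r$. Every such branch whose support meets $B(\bx,r)$ is contained in $B(\bx,2r)$. Hence
\[ \mu(W^{(\varepsilon r)}\cap B(\bx,r)) \le \sum_{\omega \in \Lambda_r,\ \supp\mu_\omega\cap B(\bx,r)\neq\varnothing} \mu_\omega(W^{(\varepsilon r)}) \le C (\varepsilon/r_{\min})^{\eta}\sum_{\omega} |\mu_\omega|. \]
Here the second inequality uses the previous step with $s=\varepsilon r/r_\omega\le \varepsilon/r_{\min}$. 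The last sum is at most $\mu(B(\bx,2r))$, because all those branches are supported in $B(\bx,2r)$ and the branch decomposition is a decomposition of $\mu$. This gives \eqref{e:innerdef} with $c=2$ and $\phi(\varepsilon)=C r_{\min}^{-\eta}\varepsilon^{\eta}$ (truncated at $1$).

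Overlaps are not a problem, and no separation condition is needed. The reason is that $\mu=\sum_{\omega\in\Lambda_r}\mu_\omega$ holds as an identity of measures, and we only use that the branches touching $B(\bx,r)$ have total mass at most $\mu(B(\bx,2r))$. That is precisely why the definition compares against the enlarged ball $B(\bx,cr)$ rather than $B(\bx,r)$.

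The main obstacle I expect is making the iteration in the first step uniform. A slab of relative width $s$ inside a branch of scale $r_\omega$ must be compared with slabs in sub-branches of scale $r_\omega r_{\min}\gamma$. Sub-branch diameters vary by the factor $r_{\min}$, so the widths must be chosen as $\gamma r_{\min}$ times the current scale, exactly as in the proof of Lemma~\ref{lma: prod of SS}. I would carry this out by induction on $t$, proving
\[ \mu_\omega\bigl(K^{((\gamma r_{\min})^t r_\omega)}\bigr)\le(1-\delta_1)^t|\mu_\omega| \]
uniformly in $\omega$ and $K$, and then interpolating between consecutive $t$.
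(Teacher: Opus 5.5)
Your proof is correct, and it is organised differently from the paper's. The paper never states a global estimate. It runs a single induction directly on the localised quantity $\mu(K^{(r_n)}\cap B(\bx,r+r_n))$ with $r_n=(c-1)r(\gamma r_{\min}/4)^{n-1}$. At each step it decomposes $\mu$ into branches of ratio in $[r_nr_{\min}/2,r_n/2]$, so the slightly enlarged balls $B(\bx,r+r_n)$ shrink back towards $B(\bx,r)$, and the base case is the trivial bound by $\mu(B(\bx,cr))$.

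You instead separate the two issues:
\begin{enumerate}
\item First you prove the scale-invariant bound $\mu(K^{(s)})\le Cs^{\eta}$ for all hyperplanes $K$. This is the hyperplane case of the iteration in the proof of Lemma~\ref{lma: prod of SS}, and it transfers to every branch for free, since similarities map slabs to slabs.
\item Then you localise in one shot, using the identity $\mu=\sum_{\omega\in\Lambda_r}\mu_\omega$ and the fact that the branches touching $B(\bx,r)$ lie in $B(\bx,2r)$ and so have total mass at most $\mu(B(\bx,2r))$, regardless of overlaps.
\end{enumerate}
Your route is more modular. It gives the explicit $\phi(\varepsilon)\asymp\varepsilon^{\eta}$ at all $r$ without bookkeeping of nested balls, and the interpolation in $s$ happens once, in the global estimate. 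The paper's route handles any prescribed $c>1$ directly. Yours does too if you decompose at scale $(c-1)r$ instead of $r$.

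The one adjustment is in the step you flagged. With width ratio exactly $\gamma r_{\min}$, the induction does not quite close. Sub-branches must have ratio at least $w_{t+1}/\gamma$ for Lemma~\ref{l:tube} to apply, and diameter at most $w_t-w_{t+1}$ so that those meeting the thin slab lie in the thick one. These two requirements are in tension. Take the ratio to be a fixed fraction of $\gamma r_{\min}$, for instance $\gamma r_{\min}/2$ with sub-branches at scale half the current width; this is equivalent to shrinking $\gamma$, which Lemma~\ref{l:tube} permits. The paper does exactly this with its factor $\gamma r_{\min}/4$.
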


\begin{proof}
Without loss of generality we may assume that the support of $\mu$ is contained in a ball of diameter $1$. 
Let $\delta,\gamma$ be as in Lemma~\ref{l:tube} for $\mu$. 
Fix $c>1$. 
Fix an IFS defining $\mu$ and let $r_{\min} \in (0,1)$ be its smallest contraction ratio. 
Fix an affine hyperplane $K$ of $\RR^k$, and fix $\bx \in \supp(\mu)$ and $0 < r < 1$. 

\textbf{Claim:}
For all $n \geq 1$, if we let $r_n \coloneqq (c-1) r (\gamma r_{\min} / 4)^{n-1}$ then 
\[ \mu(K^{(r_n)} \cap B(\bx,r+r_n)) \leq (1-\delta)^{n-1} \mu(B(\bx,cr)).\] 

\textbf{Proof of claim:}
The proof goes by induction. The $n=1$ case holds since 
\[ K^{(r_1)} \cap B(\bx,r+r_1) \subseteq B(\bx,cr), \]
which is true because $(c-1)r \geq r_1 \geq r_2 \geq \dotsb$. 
Assume that the claim holds for some $n \geq 1$. 
Write $\mu = \sum_{w \in \Omega} p_w \mu_{f_w}$ where $\Omega$ is a subset of finite words from the alphabet of the IFS such that the contraction ratios of the corresponding compositions $f_w$ all lie in the interval $[r_n r_{\min}/2, r_n/2]$. 
Let 
\[ \Omega' \coloneqq \{ w \in \Omega : \supp (\mu_{f_w}) \cap K^{(r_{n+1})} \cap B(\bx,r+r_{n+1}) \neq \varnothing \}.\] 
By our choice of $r_n$, if $w \in \Omega'$ then $\supp (\mu_{f_w}) \subseteq K^{(r_{n})} \cap B(\bx,r+r_{n})$. 
Now by Lemma~\ref{l:tube}, 
\begin{align*}
    \mu(K^{(r_{n+1})} \cap B(\bx,r+r_{n+1})) &= \sum_{w \in \Omega'} p_w \mu(f_w^{-1}((K^{(r_{n+1})} \cap B(\bx,r+r_{n+1}))) \\
    &\leq \sum_{w \in \Omega'} p_w(1-\delta)\\
    &\leq (1-\delta) \mu(K^{(r_{n})} \cap B(\bx,r+r_{n})) \\
    &\leq (1-\delta)^{n} \mu(B(\bx,cr)),
\end{align*}
where the last inequality was by the induction hypothesis. This completes the proof of the claim. 

It follows from this claim that 
\begin{equation*}
    \mu(K^{(r_n)} \cap B(\bx,r)) \leq \mu(K^{(r_n)} \cap B(\bx,r+r_n)) \leq (1-\delta)^n B(\bx,cr).
\end{equation*}
 We can increase the implicit multiplicative constant (in a way that depends on $k,\mu,c$) so that the desired non-concentration property holds along a geometric sequence of scales with the function $\phi(\varepsilon)$ being a constant multiple of $\varepsilon^\alpha$ for some small $\alpha > 0$ depending only on $\delta,\gamma,r_{\min}$, which in turn depend only on $k$ and $\mu$. 
 By taking $\phi(\varepsilon)$ to be an even larger constant multiple of $\varepsilon^\alpha$ we can ensure that the property holds at all scales. 
\end{proof} 

Khalil's proof of decay outside sparse frequencies under different non-concentration assumptions also goes through under the inner-affine non-concentration assumption (see \cite[Remark~1.8~(2) and Remark~6.5~(1)]{Khalil}), giving the following statement. 
\begin{thm}\label{thm: innerafd}[\cite{Khalil}]
Every inner-affinely non-concentrated Borel probability measure on $\RR^k$ has Fourier decay outside sparse frequencies. 
\end{thm}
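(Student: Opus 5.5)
This is Khalil's theorem under a weaker hypothesis, so the plan is to re-run Khalil's argument from \cite[Section~6]{Khalil} and check that each use of his uniform affine non-concentration condition can be replaced by the inner-affine version \eqref{e:innerdef}. Beyond that, I would verify that nothing in the argument needs $\mu$ to be doubling or Ahlfors regular.

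The scheme is the usual $L^2$-flattening one.

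\textbf{Step 1: reduce to an additive energy bound.} By the remark after Definition~\ref{d:afd}, it suffices to cover the set of large Fourier coefficients $\{|\bxi|<R,\ |\widehat{\mu}(\bxi)|\geq R^{-\delta}\}$ by $\ll R^{\epsilon}$ unit balls. Through the standard $L^2$ and Plancherel argument, this reduces to showing that for some $n=n(\epsilon)$ the $n$-fold self-convolution of $\mu$, mollified at scale $R^{-1}$, has $L^2$ norm at most $R^{\epsilon}$ up to constants. Equivalently, one needs decay of the multiscale additive energy of $\mu$.

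\textbf{Step 2: flattening lemma.} Following Khalil, this is proved by an induction on scales driven by an asymmetric Balog--Szemer\'edi--Gowers argument together with an inverse theorem for the $L^2$ norm of convolutions. The inverse theorem says the following: if convolving with $\mu$ fails to flatten a measure $\nu$ at scale $r$, then at many intermediate scales $\rho$ the measure $\mu$ restricted to a ball $B(\bx,\rho)$ is essentially concentrated near a proper affine subspace at relative precision $\varepsilon$.

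\textbf{Step 3: where non-concentration enters.} Non-concentration is used only to exclude the conclusion of Step 2. This is done by bounding the mass of an $\varepsilon\rho$-neighbourhood of an affine hyperplane $W$ inside a ball $B(\bx,\rho)$ centred on $\supp(\mu)$, relative to the mass of the same ball. Khalil normalises by $\mu(B(\bx,\rho))$; the inner version \eqref{e:innerdef} only bounds this mass by $\phi(\varepsilon)\mu(B(\bx,c\rho))$.

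I expect this change of normalisation to be the main obstacle. The fix I would try first is to run the multiscale decomposition on a family of balls $B(\bx,\rho)$ whose $c$-dilates have bounded overlap, using a Vitali or dyadic-type cover. Then summing the bound
\[
\phi(\varepsilon)\,\mu(B(\bx,c\rho))
\]
over the cover costs only a constant factor depending on $c$ and $k$. The per-scale gain $\phi(\varepsilon)$ survives, with a constant loss that can be absorbed by choosing $\varepsilon$ small.

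Condition \eqref{e:innerdef} is imposed only at centres in $\supp(\mu)$. This suffices because every ball carrying positive mass can be recentred at a support point at the cost of doubling its radius, which again is absorbed into $c$. Also, $\phi(\varepsilon)\to0$ is exactly the qualitative input Khalil's induction needs to gain a fixed factor per block of scales. These are the points flagged in \cite[Remark~1.8~(2) and Remark~6.5~(1)]{Khalil}.

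Finally, combining Steps 1–3 yields the sparse-frequency bound. Together with Theorem~\ref{thm: selfsiminner}, this gives the reverse implication of Theorem~\ref{thm: AFD}.
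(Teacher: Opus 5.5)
Your proposal takes the same approach as the paper. The paper gives no independent proof: it notes that Khalil's flattening argument goes through with the inner-affine condition in place of his uniform condition \cite[(1.1)]{Khalil}, citing exactly \cite[Remark~1.8~(2) and Remark~6.5~(1)]{Khalil} as you do. Your extra details (the Plancherel reduction, recentring at support points, and the bounded-overlap summation of $\phi(\varepsilon)\mu(B(\bx,c\rho))$) are your own elaboration of that check; the paper leaves it entirely to Khalil.
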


We now have all the pieces required for Theorem~\ref{thm: AFD}. 
\begin{proof}[Proof of Theorem~\ref{thm: AFD}]
It is straightforward to see that if a Borel probability measure is supported inside an affine hyperplane in $\RR^k$ then it cannot have Fourier decay outside sparse frequencies. 
The non-trivial direction is the converse implication, which follows from Theorems~\ref{thm: selfsiminner} and~\ref{thm: innerafd}. 
\end{proof}

\section{Self-conformal measures}\label{s:conformal}
\subsection{Self-conformal measures in the plane}\label{ss:selfconfplane}
Recall the definition of self-conformal measures from Section~\ref{ss:selfconfprelim}. For now we work in the (complex) plane. 
For brevity we introduce the following terminology. 
\begin{defn}
    A set $\Gamma \subset \CC$ is an \emph{analytic curve} if it is an embedded $1$-dimensional real-analytic submanifold of $\CC$. In other words, for each $z_0 \in \Gamma$, there exists an open subset $U \subset \CC$ with $z_0 \in U$ and a map $\gamma \colon (0,1) \to \CC$ whose real and imaginary components are both real-analytic, such that $\gamma$ is a homeomorphism from $(0,1)$ onto $\Gamma \cap U$, and $\gamma'(t) \neq 0$ for all $t \in (0,1)$. 
\end{defn}
\begin{defn}
    Let $\{ \varphi_i \colon U \to U\}_i$ be a conformal IFS and $\nu$ an associated self-conformal measure. 
    \begin{itemize}
        \item We say $\Phi$ and $\nu$ are \emph{nonlinear} if there exists $i$ such that $\varphi_i$ is non-affine. 
        \item We say $\Phi$ and $\nu$ are \emph{curve-reducible} if there is an analytic curve $\Gamma$ such that $\varphi_i(\Gamma) \subseteq \Gamma$ for each $i$, and \emph{curve-irreducible} otherwise. 
    \end{itemize}
\end{defn}

A key goal of this section is the following result (a restatement of Theorem~\ref{thm: analyticconformal} with this new language), which will follow by combining Theorem~\ref{thm: conjselfconf} below with a recent result of Algom, Rodriguez~Hertz and Wang~\cite{AHW25}. 
\begin{thm*}
    Let $\nu$ be a nonlinear and curve-irreducible self-conformal measure, and assume the domain $D \subset \CC$ of the contractions is a closed ball. 
    Then $\nu$ has polynomial Fourier decay. 
\end{thm*}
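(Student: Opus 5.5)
The plan is a dichotomy according to whether the IFS $\Phi=\{\varphi_i\}$ is conjugate to a self-similar IFS by a holomorphic map. In the non-conjugate case we invoke \cite[Theorem~1.1]{AHW25}. That result gives polynomial Fourier decay for self-conformal measures in the plane whose IFS is not conjugate to a linear (self-similar) one, under non-concentration hypotheses which curve-irreducibility should supply. So the substantive work is in the conjugate case. This is the content of Theorem~\ref{thm: conjselfconf}.

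Suppose there is a holomorphic diffeomorphism $h$, defined on a neighbourhood of $D$, such that $h\circ\varphi_i\circ h^{-1}=g_i$ is a similarity of $\CC\cong\RR^2$ for each $i$.
\begin{enumerate}
\item Since $D$ is a closed ball, hence connected, the measure $\mu \coloneqq \nu_h$ is a self-similar measure on $\RR^2$ for $\{g_i\}$ with the same weights. We then have $\nu=\mu_{f}$ with $f=h^{-1}$, viewed as a real-analytic map from a connected open neighbourhood $U$ of $\supp(\mu)$ to $\RR^2$.
\item Every self-similar system in $\RR^2$ is non-expanding, so Theorem~\ref{thm: maingeneraldomain}~\eqref{i:nonexpanding} applies once two facts are checked: $\mu$ is affinely irreducible, and $f$ is non-degenerate.
\end{enumerate}

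\textbf{Affine irreducibility of $\mu$.} Suppose $\mu$ gives positive mass to some line. Then the smallest affine subspace containing $\supp(\mu)$ is a line $\ell$ or a point; a point is excluded by the standing assumption. Since $\supp(\mu)$ spans $\ell$ and is invariant, each $g_i$ maps $\ell$ into $\ell$. Then $\Gamma \coloneqq h^{-1}(\ell\cap h(U))$ is an analytic curve with $\varphi_i(\Gamma)\subseteq\Gamma$, which contradicts curve-irreducibility. The one care point is to shrink domains so that $\Gamma$ is a genuine embedded curve mapped into itself. To get from positive mass on a line to support in a line, one can instead use the self-similar case of Lemma~\ref{lma: prod of SS}.

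\textbf{Non-degeneracy of $f$.} This is the step I expect to be the main obstacle. By Proposition~\ref{prop: degequivalences}, degeneracy means that on some ball there are $(b_1,b_2)\neq 0$ with $b_1\,\mathrm{Re} f+b_2\,\mathrm{Im} f$ affine. Equivalently, $\mathrm{Re}(\bar\beta f)$ is real-affine for some $\beta\in\CC\setminus\{0\}$. Since $\bar\beta f$ is holomorphic, its real part is harmonic, and a harmonic function that is affine determines its harmonic conjugate up to a constant. Hence $\bar\beta f$ is complex-affine there, and by analytic continuation on the connected domain, $f=h^{-1}$ is affine throughout. Then $\varphi_i=f\circ g_i\circ f^{-1}$ would all be affine (similarities), contradicting nonlinearity.

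So $f$ is non-degenerate, and Theorem~\ref{thm: maingeneraldomain}~\eqref{i:nonexpanding} gives polynomial Fourier decay of $\nu=\mu_f$. Combined with the non-conjugate case, this proves the theorem. The delicate points I anticipate are confirming that the hypotheses of \cite{AHW25} match curve-irreducibility exactly, and handling domain issues for the conjugacy $h$. The ball assumption on $D$ is presumably there to guarantee that the conjugacy extends to a connected neighbourhood.
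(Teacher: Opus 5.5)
Your architecture is the paper's. You split according to whether $\Phi$ is holomorphically conjugate to a self-similar IFS, dispatch the non-conjugate case with \cite[Theorem~1.1]{AHW25}, and in the conjugate case write $\nu=\mu_f$. You then get affine irreducibility of $\mu$ from curve-irreducibility and non-degeneracy of $f$ from holomorphy plus nonlinearity; your harmonic-conjugate argument is an equivalent, slightly slicker form of Lemma~\ref{lma:cauchyriemann}. Finally you apply Theorem~\ref{thm: maingeneraldomain}~\eqref{i:nonexpanding}.

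\textbf{The gap: the two halves of your dichotomy do not meet.} Your conjugate case genuinely needs a \emph{holomorphic} conjugacy. Otherwise $f$ is not real-analytic, so Theorem~\ref{thm: maingeneraldomain} does not apply and the harmonic-conjugate argument is unavailable. By contrast, \cite[Theorem~1.1]{AHW25} is a statement about IFSs that are not conjugate to linear via any $C^2$ diffeomorphism. An analytic IFS that is $C^2$-conjugate, but not holomorphically conjugate, to a self-similar one would therefore be covered by neither half. The paper closes this with the Poincar\'e--Siegel theorem \cite[Theorem~2.8.2]{KatokBook}, argued as in \cite[Claim~6.1]{AHW23}. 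One map is holomorphically linearisable at its fixed point, and $C^2$ linearisations of a conformal contraction are unique up to a real-linear change of coordinates. So after such a change of coordinates the $C^2$ conjugacy is holomorphic near that fixed point. Pulling back along iterates of that map then makes it holomorphic on a neighbourhood of the attractor, and the conjugated maps become holomorphic similarities. Thus holomorphic non-conjugacy implies $C^2$ non-conjugacy, and only then can \cite{AHW25} be invoked. You flagged the hypotheses of \cite{AHW25} as uncertain, but the mismatch that actually needs an argument is the regularity of the conjugacy.

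\textbf{A smaller point: connectedness of the domain.} Definition~\ref{def:conj} only provides a conjugacy on a neighbourhood of the attractor, which may be disconnected. Your continuation step (affine on one ball, hence affine everywhere, hence every $\varphi_i$ affine) uses a connected domain containing $D$. The paper handles this in two ways. Lemma~\ref{l:gatherpushforward}~\eqref{i:nonlinear} shows $f$ is non-affine on every component meeting $\supp(\mu)$, because near any point either $\varphi_w$ or $\varphi_w\circ\varphi_1$ is non-affine. It also splits $\nu$ into finitely many pieces, as in the proof of Theorem~\ref{thm: conjselfconf}. Your assumption can also be justified directly, though not for the reason you guess. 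If $h$ conjugates near the attractor, then for a long word $w$ the map $g_w^{-1}\circ h\circ\varphi_w$ is defined and injective on the whole convex domain $U$ from Section~\ref{ss:selfconfprelim}. It agrees with $h$ on a forward-invariant neighbourhood of the attractor, and conjugates $\varphi_i$ to $g_i$ on $U$ by the identity theorem. This uses convexity of $U$. The closed-ball hypothesis on $D$ is there only because \cite{AHW25} assumes it.
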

Self-similar measures like the Cantor--Lebesgue measure (or products of this measure with itself) may not be Rajchman; this is the reason for the nonlinearity assumption. 
Example~\ref{ex:pushtoparabola} below will show why the curve-irreducibility assumption cannot be removed. 
Note that if $\Haus\supp(\nu)>1$ then $\nu$ is automatically curve-irreducible. 

The theory of nonlinear self-conformal measures breaks into two cases according to the following definition. 
\begin{defn}\label{def:conj}
    We say that a conformal IFS $\Phi = \{\varphi_i\}_{i \in I}$ on $\CC$ is \emph{conjugate to linear} if there exists a self-similar IFS $\{\psi_i\}_{i \in I}$ and a holomorphic diffeomorphism $f$ from an open neighbourhood of the self-similar set to an open neighbourhood of the self-conformal set, so that $\varphi_i = f \circ \psi_i \circ f^{-1}$ for all $i \in I$. 
\end{defn}
We will see that the tools in this paper prove very fruitful in the conjugate-to-linear case, and address point~2 below the statement of Theorem~1.1 in~\cite{AHW25}. 

\begin{lma}\label{l:gatherpushforward}
Let $\nu$ be a self-conformal measure with weights $p_i$ for an IFS $\{\varphi_i\}$ which is conjugate to a self-similar IFS $\{\psi_i\}$ via a holomorphic diffeomorphism $f$, using notation as above. 
\begin{enumerate}
    \item\label{i:push} If $\mu$ is the self-similar measure for $\{\psi_i\}$ with the same weights $p_i$, then $\nu = \mu_f$ (in this case we say $\nu$ and $\mu$ are conjugate). 
    \item\label{i:nonlinear} If $\nu$ is a nonlinear self-conformal measure, then $f$ is non-affine on every connected component of its domain which intersects $\supp(\mu)$. 
\end{enumerate}
\end{lma}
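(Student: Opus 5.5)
For part~\eqref{i:push}, the plan is to show that $\mu_f$ satisfies the defining self-conformal relation for $\{\varphi_i\}$ with weights $p_i$ and then invoke uniqueness from Hutchinson's theorem. Since $\varphi_i = f\circ\psi_i\circ f^{-1}$, we have $\varphi_i\circ f = f\circ\psi_i$ on a neighbourhood of the self-similar set $K_\psi = \supp(\mu)$. Pushing forward $\mu=\sum_i p_i \mu_{\psi_i}$ by $f$ gives
\[
\mu_f = \sum_i p_i (\mu_{\psi_i})_f = \sum_i p_i \mu_{f\circ\psi_i} = \sum_i p_i \mu_{\varphi_i\circ f} = \sum_i p_i (\mu_f)_{\varphi_i}.
\]
Thus $\mu_f$ is a Borel probability measure satisfying the self-conformal equation. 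One point needs checking: $\mu_f$ must be supported in the domain on which the IFS acts, so that uniqueness applies. Here $f(K_\psi)$ is a nonempty compact set with $f(K_\psi)=\bigcup_i \varphi_i(f(K_\psi))$, hence it equals the self-conformal set $K$. The invariant measure supported in $D$ is unique, because the induced operator on probability measures on $D$ is a contraction in the Wasserstein/Kantorovich metric. Therefore $\nu=\mu_f$.

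For part~\eqref{i:nonlinear}, I will argue by contrapositive. Suppose $f$ is affine on some connected component $V$ of its domain with $V\cap\supp(\mu)\neq\varnothing$. Being a holomorphic diffeomorphism that is affine, $f(z)=az+b$ on $V$ with $a\neq 0$. Pick $x\in V\cap\supp(\mu)$. I then use self-similarity to move every map into $V$: choose a long word $w$ with $\psi_w(K_\psi)\subset V$, which is possible because $K_\psi$ contains points arbitrarily close to $x$ of the form $\psi_w(K_\psi)$ with small diameter. Then for each $i$, on the open set $f(\psi_w(V'))$ (where $V'$ is a small neighbourhood) we have
\[
\varphi_i = f\circ\psi_i\circ f^{-1}.
\]
The difficulty is that $\psi_i(\psi_w(K_\psi))$ need not lie in $V$.

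The cleaner route is conjugation with an intermediate word. For each $i$, consider $\varphi_w\circ\varphi_i = f\circ\psi_w\circ\psi_i\circ f^{-1}$. The image $\psi_w\psi_i(\text{neighbourhood})$ lies in $V$, where $f$ is affine, so $f\circ\psi_w\circ\psi_i$ is affine on a neighbourhood of $K_\psi$'s relevant part. I then need $f^{-1}$ to be affine near points of $K$, and this is where I propagate affinity. I expect this propagation to be the main obstacle.

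To handle it, let $A$ be the set of points of $K_\psi$ near which $f$ is affine; $A$ is open in $K_\psi$ and nonempty. I aim to show $A=K_\psi$, so that by the identity theorem $f$ is affine on every component meeting $K_\psi$. With that in hand, each $\varphi_i=f\circ\psi_i\circ f^{-1}$ is affine near $K$, hence affine on the connected domain $U$ by analytic continuation, which contradicts nonlinearity. Propagating $A$ requires relating different components, which the hypothesis does not obviously allow. I therefore expect to reinterpret the hypothesis as \emph{some} component, and to rely on the identity theorem within components, using the relation $f\circ\psi_i = \varphi_i\circ f$. If some $\varphi_i$ is non-affine on $U$, then $\varphi_i$ is non-affine near every point of the self-conformal set, by connectedness of $U$ and the identity theorem. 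Fix $y\in\supp(\mu)\cap V$. Choosing a word $w$ such that $\psi_w$ maps a neighbourhood of $K_\psi$ into $V$ gives $\varphi_w\circ f = f\circ\psi_w$ affine near $K_\psi$. Since $\varphi_w$ is holomorphic and injective, $f=\varphi_w^{-1}\circ(\text{affine})$ near $K_\psi$.

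Applying the same with $wi$ in place of $w$ gives $\varphi_{w}\varphi_i\varphi_w^{-1}$ affine on an open set near $\varphi_w(K)$. Each $\varphi_j$ is a conformal map of the plane, and I will exploit that an affine identity on an open set extends, via the identity theorem, to all of $U$. Combining this across words, the plan is to conclude that $\varphi_i$ is affine, contradicting nonlinearity.
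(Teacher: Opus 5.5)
Your part~(1) is correct and is essentially the paper's argument: you push the self-similar relation forward using $f\circ\psi_i=\varphi_i\circ f$ and invoke uniqueness. Part~(2), however, is not proved, because its final sentence (``combining this across words, the plan is to conclude that $\varphi_i$ is affine'') is a plan rather than an argument. The facts you assemble do not yet give it. That $\varphi_w\circ f$ is affine near $K_\psi$ and that $\varphi_w\varphi_i\varphi_w^{-1}$ is affine near $\varphi_w(K)$ only says that $\varphi_i$ is conjugate by $\varphi_w$ to an affine map. This is compatible with $\varphi_i$ being non-affine unless you also know that $\varphi_w$ itself is affine. Your attempted route to that, propagating affinity of $f$ to all of $K_\psi$, is left unproven. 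As you note, it does not obviously pass between components of the domain of $f$. That worry is a red herring: the connectedness to exploit is that of $U$, the domain of the $\varphi_i$, not that of the domain of $f$.

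The missing observation is that no propagation is needed. It suffices to know the conjugacy relations on one nonempty open set, and the identity theorem on $U$ does the rest. Take the following:
\begin{itemize}
\item a point $y\in V\cap\supp(\mu)$;
\item a neighbourhood $N$ of $K_\psi$ inside the domain of $f$ with $\psi_i(N)\subset N$ for all $i$ (for example a small $\epsilon$-neighbourhood);
\item a word $w$ with $\psi_w(N)\subset V$.
\end{itemize}
On the open set $f(V\cap N)\ni f(y)$ you have $f^{-1}=L^{-1}$, where $L=f|_V$. Both $\psi_w$ and $\psi_w\circ\psi_i$ map $V\cap N$ into $V$, where $f=L$. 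Hence $\varphi_w=L\circ\psi_w\circ L^{-1}$ and $\varphi_w\circ\varphi_i=L\circ\psi_w\circ\psi_i\circ L^{-1}$ on that open set. So both maps are affine on all of $U$, and $\varphi_i=\varphi_w^{-1}\circ(\varphi_w\circ\varphi_i)$ is affine for every $i$, contradicting nonlinearity. In your own notation, $f=\varphi_w^{-1}\circ B$ near $K_\psi$ together with $f=L$ on $V\cap N$ already gives $\varphi_w^{-1}=L\circ B^{-1}$ on an open set; you were one line away.

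This is exactly the paper's device. It localises at a point $z$ of the self-conformal set and uses nonlinearity of $\varphi_1$ to force $\varphi_w$ or $\varphi_w\circ\varphi_1$ to be non-affine near $z$. It then observes that in $f\circ\psi_w\circ f^{-1}$ the map $f^{-1}$ is evaluated only near $z$ and $f$ only near $f^{-1}(z)$.
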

\begin{proof}
\eqref{i:push}: For all Borel $A \subseteq \RR^k$, 
\begin{align}\label{e:conjispush} 
\begin{split}
		\mu_f (A) = \mu(f^{-1} (A)) = \sum_{i} p_i \mu(\psi_{i}^{-1}\circ f^{-1} (A)) &= \sum_{i} p_i \mu (f^{-1}\circ\varphi_i^{-1} (A))\\*
		&=\sum_{i} p_i (\mu_f)_{\varphi_{i}} (A). 
\end{split}
\end{align} 
By the definition of a self-conformal measure this shows that $\nu=\mu_f$. 

\eqref{i:nonlinear}: We denote the non-affine map by $\varphi_1$. Since $\nu$ is compactly supported there exists $\delta>0$ such that the range of $f$ contains the $\delta$-neighbourhood of $\supp(\nu)$. 
Fix any $z \in \supp(\nu)$, so for some sequence $(i_n)_{n=1}^{\infty}$ we can write $\{z\} = \bigcap_{n=1}^\infty \varphi_{i_1} \circ \dotsb \circ \varphi_{i_n}(\supp(\nu))$. Fix $n$ large enough that the diameter of $\varphi_{i_1} \circ \dotsb \circ \varphi_{i_n}(\supp(\nu))$ is less than $\delta$. By the nonlinearity assumption on $\varphi_1$, either $\varphi_{i_1} \circ \dotsb \circ \varphi_{i_n}$ or $\varphi_{i_1} \circ \dotsb \circ \varphi_{i_n} \circ \varphi_1$ is non-affine on $B(z,\delta)$. 
In other words, either $f \circ \psi_{i_1} \circ \dotsb \circ \psi_{i_n} \circ f^{-1}$ or $f \circ \psi_{i_1} \circ \dotsb \circ \psi_{i_n} \circ \psi_1 \circ f^{-1}$ is non-affine on $f^{-1}(B(z,\delta))$. 
Since the $\psi_i$ are affine, $f$ is non-affine on $f^{-1}(B(z,\delta))$. Since $z \in \supp(\nu)$ was arbitrary and $f$ is holomorphic, this completes the proof. 
\end{proof}

We will use the following fact from complex analysis. 
\begin{lma}\label{lma:cauchyriemann}
    Let $U \subset \CC$ be non-empty, open and connected and $f \colon U \to \CC$ holomorphic. 
    Then $f$ is degenerate if and only if $f$ is affine. 
\end{lma}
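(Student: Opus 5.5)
We regard $f$ as a map $U \subset \RR^2 \to \RR^2$ by writing $f = u + iv$, so that $f = (u,v)$. Then, for $\bv = (v_1,v_2) \in \mathbb{S}^1$, the function $f_\bv = v_1 u + v_2 v$ has gradient
\[
P_\bv = \nabla f_\bv = \bigl(v_1 u_x + v_2 v_x,\; v_1 u_y + v_2 v_y\bigr).
\]
The plan is to rewrite $P_\bv$ through the Cauchy--Riemann equations $u_x = v_y$, $u_y = -v_x$, and then to read degeneracy off as constancy of $f'$.

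For the easy direction, suppose $f$ is affine, $f(z) = az + b$ with $a,b \in \CC$. Then $u,v$ are real-affine, so every $P_\bv$ is constant on all of $U$. Hence $f$ is degenerate.

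For the converse, suppose $f$ is degenerate, so there exist a ball $B$ with $B \cap U \neq \varnothing$ and $\bv \in \mathbb{S}^1$ such that $P_\bv$ is constant on $B \cap U$. Using the Cauchy--Riemann equations, write $f' = u_x + i v_x = \alpha + i\beta$, with $\alpha = u_x = v_y$ and $\beta = v_x = -u_y$. Then
\[
P_\bv = \bigl(v_1\alpha + v_2\beta,\; -v_1\beta + v_2\alpha\bigr).
\]
Identifying $\RR^2$ with $\CC$, this vector is exactly $\overline{(v_1 - i v_2)}\cdot(\ldots)$; concretely, it equals the complex number $(v_1 - i v_2)\overline{f'}$ up to a fixed orthogonal identification. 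The point is that $\bx \mapsto P_\bv(\bx)$ is obtained from $f'$ by a fixed invertible real-linear map. Indeed, this map is multiplication by the nonzero complex number $v_1 - iv_2$ composed with complex conjugation, since $|\bv| = 1$. Therefore $P_\bv$ being constant on $B \cap U$ forces $f'$ to be constant on the nonempty open set $B \cap U$.

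By the identity theorem, applied to the holomorphic function $f' - c$ on the connected set $U$, $f'$ is constant on all of $U$. Hence $f(z) = az + b$ on $U$, so $f$ is affine.

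The only step needing care is the linear-algebra identification showing that the map $f' \mapsto P_\bv$ is invertible for every unit $\bv$. This reduces to checking that the $2\times 2$ matrix
\[
\begin{pmatrix} v_1 & v_2 \\ v_2 & -v_1 \end{pmatrix}
\]
has determinant $-(v_1^2+v_2^2) = -1 \neq 0$, which is immediate. Everything else is standard.
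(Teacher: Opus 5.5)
Your proof is correct and is essentially the paper's argument. The paper passes through Proposition~\ref{prop: degequivalences} to get an affine relation $c_0+c_x x+c_y y+c_u u+c_v v=0$ and differentiates it using Cauchy--Riemann, which gives exactly your linear system with $(c_u,c_v)$ in the role of $\bv$ (determinant $\pm(c_u^2+c_v^2)$); you instead read the system off directly from $P_\bv$, and you make explicit the identity-theorem step extending constancy of $f'$ from $B\cap U$ to $U$, which the paper leaves implicit.

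One cosmetic slip: the correct complex identity is $P_\bv=(v_1+iv_2)\overline{f'}$, not $(v_1-iv_2)\overline{f'}$. Nothing depends on it, since your determinant computation already settles invertibility.
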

\begin{proof}
    The backward implication is trivial, so we prove the forward implication. 
    Assume $f$ is degenerate. 
    Letting $u,v$ be the real and imaginary parts of $f$ and recalling Proposition~\ref{prop: degequivalences}, the non-degeneracy implies that there is some non-trivial affine form $L$ so that 
    \[ L(x,y,u(x,y),v(x,y))=0 \] 
    constantly. 
    We write out the affine form with real coefficients (not all zero) as
\[
c_0+c_x x+c_y y+ c_u u+c_v v=0.
\]
We can take partial derivatives with respect to $x,y$ and see that
\begin{align*}
\begin{bmatrix}
c_u & -c_v\\
c_v & c_u
\end{bmatrix}
\begin{bmatrix}
u_x \\
u_y 
\end{bmatrix}=
\begin{bmatrix}
-c_x \\
-c_y 
\end{bmatrix},
\end{align*}
where we have used the Cauchy--Riemann equations ($u_x=v_y$, $u_y=-v_x$) for holomorphic functions. 
The determinant of the matrix cannot be zero unless $c_u=c_v=0$; in this case, $L$ indicates an affine relation between $x,y$ which cannot exist. Therefore we can solve the above linear equations and see that $u_x,u_y$ are constant functions. This implies that $u$ is affine. Similarly, $v$ is affine. Thus $f$ is affine, as required. 
\end{proof} 

We can now prove the following Kaufman-type result for holomorphic pushforwards. 
\begin{thm}\label{thm:analytickaufman}
    Let $\mu$ be an affinely irreducible self-similar measure on $\CC$, let $U \subset \CC$ be a connected open neighbourhood of $\supp(\mu)$, and let $f \colon U \to \CC$ be holomorphic and non-affine. Then $\mu_f$ has polynomial Fourier decay. 
\end{thm}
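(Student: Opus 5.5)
We will deduce the statement from Theorem~\ref{thm: maingeneraldomain}~\eqref{i:nonexpanding}, identifying $\CC$ with $\RR^2$ and viewing $f = (u,v) \colon U \to \RR^2$ as a real-analytic map; here $k = d = 2$. We need to check three hypotheses: $\mu$ is affinely irreducible, $\mu$ is non-expanding, and $f$ is non-degenerate.

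The first hypothesis is assumed in the statement. For the second, every self-similar IFS on $\RR^2$ is non-expanding. Indeed, the linear parts generate a subgroup of $O_2(\RR)$, so products of $n$ of them have linear parts with bounded norm. The rotation parts range over a virtually abelian group, and the set of $n$-fold products in such a group has cardinality growing only polynomially in $n$, hence $\ll e^{\epsilon n}$. This is the remark made after the definition of non-expanding systems. If one prefers to avoid this, the alternative route is Theorem~\ref{thm: maingeneraldomain}~\eqref{i:expanding}, but that route would require checking non-conicality, which is harder, so we use~\eqref{i:nonexpanding}.

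The third hypothesis is the main point. Non-degeneracy requires that for every ball $B$ and every $\bv$, the map $P_\bv$ is non-constant on $B \cap U$. Suppose $f$ were degenerate on some ball $B$ meeting $U$. Then by Lemma~\ref{lma:cauchyriemann}, applied to a connected component of $B \cap U$ (for instance a small disc inside it), $f$ would be affine on that open set. Since $f$ is holomorphic and $U$ is connected, the identity theorem would make $f$ affine on all of $U$. This contradicts the hypothesis that $f$ is non-affine. Lemma~\ref{lma:cauchyriemann} is stated for a connected open domain, so we apply it to the small disc and then extend by analytic continuation.

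With all three hypotheses verified, Theorem~\ref{thm: maingeneraldomain}~\eqref{i:nonexpanding} gives polynomial Fourier decay of $\mu_f$. I expect no substantial obstacle. The only care needed is the localisation in the non-degeneracy step: the definition of non-degeneracy quantifies over all balls $B$, whereas Lemma~\ref{lma:cauchyriemann} is global on a connected domain, and analyticity is what bridges the two.
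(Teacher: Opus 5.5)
Your proposal is correct and follows the paper's own argument. The paper likewise gets non-degeneracy from Lemma~\ref{lma:cauchyriemann}, notes that planar self-similar measures are non-expanding, and applies Theorem~\ref{thm: maingeneraldomain}~\eqref{i:nonexpanding}; your localisation to a small disc followed by the identity theorem spells out a step the paper leaves implicit.
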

\begin{proof}
    By Lemma~\ref{lma:cauchyriemann}, $f$ is non-degenerate, and $\mu$ is non-expanding since it lies in the plane, so $\mu_f$ has polynomial Fourier decay by Theorem~\ref{thm: maingeneraldomain}~\eqref{i:nonexpanding}. 
\end{proof}

\begin{rem}
    In \cite[Theorem~1.6]{BY-quantitative}, we proved quantitative polynomial Fourier decay for nonlinear holomorphic pushforwards of self-similar measures in $\CC$ with dimension greater than $1$. Theorem~\ref{thm:analytickaufman}, on the other hand, does not quantify the exponent, but works even for pushforwards of self-similar measures with small dimension. 
\end{rem}

We are ready to apply Theorem~\ref{thm:analytickaufman} to prove our main contribution of this section. 
\begin{thm}\label{thm: conjselfconf}
    Every curve-irreducible, nonlinear, and conjugate-to-linear self-conformal measure in $\CC$ has polynomial Fourier decay. 
\end{thm}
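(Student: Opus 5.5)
Let $\nu$ be curve-irreducible, nonlinear and conjugate to linear. By Definition~\ref{def:conj} there is a self-similar IFS $\{\psi_i\}$ and a holomorphic diffeomorphism $f$ from an open neighbourhood $V$ of the self-similar set onto a neighbourhood of the self-conformal set, with $\varphi_i = f\circ\psi_i\circ f^{-1}$. Let $\mu$ be the self-similar measure for $\{\psi_i\}$ with the same weights. The plan is to show $\nu=\mu_f$ and then apply Theorem~\ref{thm:analytickaufman}. Lemma~\ref{l:gatherpushforward}~\eqref{i:push} gives $\nu=\mu_f$ directly. It remains to check that $\mu$ is affinely irreducible and that $f$ is non-affine on a connected neighbourhood of $\supp(\mu)$.

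\emph{Connectedness.} $V$ need not be connected. Following the reduction lemma at the start of Section~\ref{s:proofassumingLoandAFD}, fix $\delta>0$ so small that every $\delta$-branch of $\mu$ lies, together with a $\delta$-neighbourhood, in a single connected component of $V$. Then $\mu_f$ is a finite weighted sum of pushforwards $(\mu_{\omega})_f$ of scaled, rotated copies of $\mu$, and each such copy sits in one component $V_j$. Polynomial decay of each summand is enough. Each copy is the pushforward of $\mu$ under a similarity $S$, so $(\mu_\omega)_f=\mu_{f\circ S}$. Here $f\circ S$ is holomorphic on the connected open set $S^{-1}(V_j)\supseteq\supp(\mu)$. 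By Lemma~\ref{l:gatherpushforward}~\eqref{i:nonlinear}, $f$ is non-affine on $V_j$ (which meets $\supp(\mu)$), hence $f\circ S$ is non-affine.

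\emph{Affine irreducibility.} Suppose $\mu$ is not affinely irreducible. Since $\mu$ is self-similar and not a single atom, Lemma~\ref{lma: prod of SS} (applied with $M$ a line) shows that $\mu$ is supported on an affine line $\ell\subset\CC$. Then $\ell$ is invariant under every $\psi_i$. To see this, note that $\supp(\mu)=\bigcup_i\psi_i(\supp(\mu))$, and each $\psi_i(\supp\mu)$ is a non-degenerate subset of both $\ell$ and $\psi_i(\ell)$. The support is uncountable, so it is not contained in two distinct lines, forcing $\psi_i(\ell)=\ell$. Now set $\Gamma=f(\ell\cap V')$, where $V'$ is a suitable neighbourhood of $\supp(\mu)$ with $\psi_i(V')\subseteq V'$; one can take $V'$ to be a small neighbourhood of the attractor if the $\psi_i$ are contractions. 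Because $f$ is a holomorphic diffeomorphism with nonvanishing derivative, $\Gamma$ is an embedded real-analytic curve, after restricting to a small enough neighbourhood of the segment of $\ell$ containing $\supp\mu$ so that $f$ is injective. It satisfies $\varphi_i(\Gamma)=f\psi_i(\ell\cap V')\subseteq f(\ell\cap V')=\Gamma$. This contradicts curve-irreducibility. Hence $\mu$ is affinely irreducible, and Theorem~\ref{thm:analytickaufman} gives polynomial decay of each summand, hence of $\nu$.

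The main obstacle is the technical care in this last step. One must choose an open neighbourhood, or segment of $\ell$, that is invariant under all $\psi_i$, lies in the domain of $f$, and is connected, so that $\Gamma$ is genuinely an embedded analytic curve mapped into itself. The natural choice is a small convex neighbourhood of the convex hull of the self-similar set intersected with $\ell$. This is an open segment $J$ mapped into itself by each $\psi_i$, and on it $f$ is an injective analytic immersion, so $\Gamma=f(J)$ works.
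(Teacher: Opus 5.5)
Your proposal is correct and follows the same route as the paper: write $\nu=\mu_f$ by Lemma~\ref{l:gatherpushforward}, split $\mu$ into finitely many small branches each lying in a connected piece of the domain of $f$ (the paper uses level-$N$ cylinders), use curve-irreducibility to get affine irreducibility of $\mu$, and apply Theorem~\ref{thm:analytickaufman} to each piece; the only difference is that you justify the affine-irreducibility step, which the paper simply asserts. One caution in that step: $f$ need not be defined on the convex hull of the self-similar set, so use your first choice $\Gamma=f(\ell\cap V')$ rather than a single segment $J$, where $V'$ is a small $\epsilon$-neighbourhood of the attractor; this $\Gamma$ may be disconnected, but it is still an analytic curve under the paper's purely local definition, so connectedness is not needed.
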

\begin{proof}
By Lemma~\ref{l:gatherpushforward}~\eqref{i:push} we can write $\nu = \mu_f$ for a self-similar measure $\mu$ on $\CC$ and a holomorphic map $f$. 
By Lemma~\ref{l:gatherpushforward}~\eqref{i:nonlinear}, $f$ is non-affine on every connected component of its domain which intersects $\supp(\mu)$. 
Since $\nu$ is curve-irreducible, $\mu$ must be affinely irreducible. 
Since $\supp(\mu)$ is compact, fix $N \in \NN$ large enough that the domain of $f$ contains the $r_{\max}^N$-neighbourhood of $\supp(\mu)$, where $r_{\max}$ is the largest contraction ratio of a map in the IFS $\{\psi_i\}$ defining $\mu$. 
For all $\xi \in \CC$, $|\widehat{\nu}(\xi)|$ can be bounded above by the sum of the magnitudes of the Fourier transform of $(\# \Phi)^N$ nondegenerate analytic images of $\mu$ at the frequency $\xi$. 
But $(\# \Phi)^N$ is a fixed constant independent of $\xi$, so Theorem~\ref{thm:analytickaufman} gives that $\nu$ has polynomial Fourier decay. 
\end{proof}

We now have all the pieces to combine with~\cite{AHW25} and give Theorem~\ref{thm: analyticconformal}. 
\begin{proof}[Proof of Theorem~\ref{thm: analyticconformal}]
If $\nu$ is conjugate to linear then the result follows from Theorem~\ref{thm: conjselfconf}. 
If $\nu$ is not conjugate to linear via any holomorphic diffeomorphism (as in Definition~\ref{def:conj}), then, as noted in~\cite{AHW25}, an application of the Poincar\'e--Siegel theorem \cite[Theorem~2.8.2]{KatokBook} similar to the proof of \cite[Claim~6.1]{AHW23} gives that $\nu$ is not conjugate to linear via any $C^2$ diffeomorphism. 
In this latter case, $\nu$ has polynomial Fourier decay by \cite[Theorem~1.1]{AHW25}.\footnote{This result assumes that the set $D$ in the definition of the self-conformal measure from Section~\ref{ss:selfconfprelim} is a closed disc, but it is likely that this assumption can be relaxed, as described in the text after the statement of the theorem.} 
\end{proof}

For self-conformal measures supported inside analytic curves, one needs to be more careful. 
If the self-conformal measure is supported inside a straight line, then there is no Fourier decay along directions orthogonal to that line (irrespective of any nonlinearity conditions). 
More interestingly, consider the following example. Note that $z \mapsto z + i(z+1)^2$ is a non-degenerate function $\CC \to \CC$ (i.e. $\RR^2 \to \RR^2$), but that its restriction to the real line (which is the map from Remark~\ref{rem:degimplications}~\eqref{i:liftdeg}) is degenerate when thought of as the function $x \mapsto (x,x^2)$ from $\RR \to \RR^2$. 
\begin{exm}\label{ex:pushtoparabola}
    Let $\mu$ be the Cantor--Lebesgue measure on $[0,1] \subset \CC$, i.e. the $(1/2,1/2)$ self-similar measure for $\{\varphi_1,\varphi_2\}$ where $\varphi_1(z) = z/3$ and $\varphi_2(z) = (z+2)/3$. 
    Fix a complex neighbourhood $U$ of $[0,1]$ that is small enough that $f \colon U \to f(U)$ is a conformal diffeomorphism, where $f$ is the quadratic polynomial $f(z) = z + i (z+1)^2$, noting that $f$ lifts $[0,1]$ to a parabola. We can moreover choose $U$ in such a way that $2 < |f'(z)| < 5$ for all $z \in U$ and that $\varphi_1$ and $\varphi_2$ map $U$ into itself. 
    The IFS 
    \[ \{ f \circ \varphi_1 \circ f^{-1},f \circ \varphi_2 \circ f^{-1} \} \] 
    on $f(U)$ can be seen to be uniformly contracting by a simple application of the chain rule. 
    By the calculation from~\eqref{e:conjispush}, $\mu_f$ is the self-conformal measure for this IFS and $(1/2,1/2)$ weights.\footnote{Incidentally, $\mu_f$ is also a self-affine measure for an IFS consisting of two affine contractions, with the same weights $(1/2,1/2)$, see \cite[Lemma~2.10]{AK}. The point is that the affine and conformal maps coincide for input values on the parabola, but are different away from the parabola.}
    Moreover, $\widehat{\mu_f}(3^n) = \widehat{\mu}(3^n)$ takes a constant nonzero value independent of $n \in \NN$, so $\mu_f$ is not Rajchman.
\end{exm}

More generally, if $\mu$ is a self-similar measure supported on $[0,1]$ without polynomial Fourier decay, $U$ is a complex neighbourhood of $[0,1]$, and $f \colon U \to \CC$ is a holomorphic map which degenerates along the line $[0,1]$, then it is not difficult to see that $\mu_f$ does not have polynomial Fourier decay. 
On the other hand, \cite[Theorem~1.1]{AK} implies that these pushforward measures have polynomial Fourier decay outside a very sparse set of exceptional frequencies. 

\begin{rem}
    Example~\ref{ex:pushtoparabola} appears to contradict Mosquera and Olivo's \cite[Theorem~3.1]{MosqueraOlivo}. However, their proof required that the common linear part of the contractions has a non-trivial rotation, thus ruling out cases like Example~\ref{ex:pushtoparabola}.
\end{rem}

The next result shows that the obstructions we have described are essentially the only obstructions to polynomial Fourier decay for nonlinear, conjugate-to-linear, curve-\emph{reducible} self-conformal measures. 

\begin{prop}\label{p:conjcurve}
    Let $\nu$ be a nonlinear, curve-reducible self-conformal measure in $\CC$ which is conjugate to linear and gives zero measure to straight lines. Then $\nu$ has polynomial Fourier decay unless there exists a self-similar measure $\mu$ supported in $[0,1]$ such that 
        \begin{equation}\label{e:nonpolyalongreals} 
        \limsup_{\substack{\xi \in \RR \\ |\xi| \to \infty}} \frac{|\widehat{\mu}(\xi)|}{|\xi|^{-\sigma}} = \infty 
        \end{equation}
        for all $\sigma > 0$, some complex neighbourhood $U$ of $\supp(\mu)$, and a conformal map $g \colon U \to \CC$ which degenerates on $[0,1] \cap U$, such that $\nu = \mu_g$. 
\end{prop}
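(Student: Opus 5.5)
By Lemma~\ref{l:gatherpushforward}~\eqref{i:push}, $\nu = \mu'_f$, where $\mu'$ is a self-similar measure on $\CC$ for the conjugating IFS $\{\psi_i\}$ and $f$ is a holomorphic diffeomorphism near $\supp(\mu')$. By Lemma~\ref{l:gatherpushforward}~\eqref{i:nonlinear}, $f$ is non-affine on each component meeting $\supp(\mu')$.

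\textbf{Step 1: reduce to a real segment.} Curve-reducibility gives an analytic curve $\Gamma$ with $\varphi_i(\Gamma)\subseteq\Gamma$, so $f^{-1}(\Gamma)$ is an analytic curve invariant under the similarities $\psi_i$. We would like $\supp(\mu')$ to lie on a straight line. If instead $\mu'$ were affinely irreducible, Theorem~\ref{thm:analytickaufman} would already give polynomial decay, so we may assume $\mu'$ is affinely reducible. In $\CC$ this means $\mu'$ gives positive mass to some line $\ell$. Lemma~\ref{lma: prod of SS}, applied with the linear equation defining $\ell$, then shows $\supp(\mu')\subseteq \ell$. After an affine change of coordinates on the domain (precomposing $f$ with a similarity, which keeps $f$ holomorphic) we may take $\ell=\RR$. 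Rescaling further puts $\supp(\mu')\subseteq[0,1]$; call the resulting measure $\mu$ and the resulting map $g$, so $\nu=\mu_g$ with $g$ conformal on a complex neighbourhood $U$ of $\supp(\mu)$. The hypothesis that $\nu$ gives zero mass to lines is used here: it rules out $g$ mapping $[0,1]\cap U$ into a line.

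\textbf{Step 2: view $g|_{\RR}$ as a map $\RR\to\RR^2$.} Regard $\nu$ as the pushforward of the one-dimensional self-similar measure $\mu$ on $\RR$ under the real-analytic map $h=g|_{[0,1]\cap U}\colon \RR\to\RR^2$. Since $k=1$, $\mu$ is affinely irreducible (it is non-atomic). Non-degeneracy and non-conicality coincide when $k=1$ (Remark~\ref{rem:degimplications}). Theorem~\ref{thm: maingeneraldomain}~\eqref{i:expanding} therefore gives polynomial decay of $\nu$ whenever $h$ is non-degenerate on $U\cap\RR$.

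\textbf{Step 3: the degenerate case.} Suppose $h$ is degenerate on some interval. Analyticity along the connected real neighbourhood, after shrinking $U$ to a thin connected strip, upgrades this to degeneracy on all of $[0,1]\cap U$. Then $h$ is non-trapped: if $h$ were trapped, $h$ would map into a line and $\nu$ would charge that line. If $\mu$ had polynomial decay, Theorem~\ref{thm: maingeneraldomain}~\eqref{i:selfsimhaspoly} would give polynomial decay of $\nu$. Hence $\mu$ fails polynomial decay, which is exactly~\eqref{e:nonpolyalongreals}, since the Fourier transform of a measure on $[0,1]$ is only considered along real frequencies. This yields the stated exceptional case.

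\textbf{Main obstacle.} The delicate point is Step 1, specifically making sure the whole support of $\mu'$, not just part of it, lies on one line. The argument also has to handle the connected-component bookkeeping, using the branch decomposition as in the proof of Theorem~\ref{thm: conjselfconf} so that $f$ can be treated one piece at a time.
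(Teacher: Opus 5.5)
Your proof is correct. Steps~2--3 follow the paper: you invoke Theorem~\ref{thm: maingeneraldomain}~\eqref{i:expanding} where the paper invokes~\eqref{i:nonexpanding}, but for $k=1$ these are interchangeable.

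The genuine difference is Step~1. The paper uses curve-reducibility to assert that $\nu$, and hence the self-similar measure, charges a proper real-analytic subvariety, and then applies Lemma~\ref{lma: prod of SS}. You never use curve-reducibility.
\begin{itemize}
\item If $\mu'$ is affinely irreducible, decay follows from Theorem~\ref{thm:analytickaufman} exactly as in the proof of Theorem~\ref{thm: conjselfconf}. That proof uses curve-irreducibility only to obtain affine irreducibility.
\item Otherwise $\mu'$ charges a line $\ell$. Lemma~\ref{lma: prod of SS}, applied to the linear equation of $\ell$, then puts all of $\supp(\mu')$ on $\ell$. So the ``main obstacle'' you flag is already handled.
\end{itemize}
Your route shows that the curve-reducibility hypothesis is not needed for the dichotomy. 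It also avoids the step ``curve-reducible $\Rightarrow$ positive mass on an analytic subvariety'', which needs some care: an invariant curve $\Gamma$ a priori only gives $\supp(\nu)\subseteq\overline{\Gamma}$. In exchange, the paper's route shows that $\mu'$ is always collinear under curve-reducibility, not just outside the decaying case.

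Delete the sentence in Step~3 that upgrades degeneracy to all of $[0,1]\cap U$ by shrinking $U$ to a thin connected strip. That is not justified. $U$ is only some neighbourhood of $\supp(\mu)$ on which the conjugacy is defined, it need not contain the gaps of $\supp(\mu)$, and shrinking cannot make it connected. Nothing is lost, because in the paper's terminology ``degenerate'' already means degenerate on some ball, which is all the exceptional case requires.

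For the non-trapped claim, first discard the components of $U$ that do not meet $\supp(\mu)$. Then an interval mapped into a line, continued analytically along its component of $U\cap\RR$, carries positive $\mu$-mass, so $\nu$ would charge that line.
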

\begin{proof}
    As in the proof of Theorem~\ref{thm: conjselfconf}, we can write $\nu = \mu_f$ for a self-similar measure $\mu$ and holomorphic $f$ defined on a neighbourhood $U$ of $\supp(\mu)$. 
    Since $\nu$ is curve-reducible, $\nu$ gives positive measure to some proper real-analytic subvarieties of $\CC$, so the same is true for $\mu$. Therefore by Lemma~\ref{lma: prod of SS}, $\mu$ is affinely reducible and supported inside some line segment which after scaling and translation we may assume to be $[0,1]$. 
    Let $V$ be any connected component of the domain of $f$ which intersects $\supp(\mu)$. 
    Since $\nu$ is nonlinear, $f$ is non-degenerate on $V$, and since $\nu$ gives zero measure to lines, $f([0,1] \cap V)$ is not contained in a line segment. 
    If~\eqref{e:nonpolyalongreals} fails, then this means that $\mu$ has polynomial Fourier decay when thought of as a measure on $\RR$, and by Theorem~\ref{thm: maingeneraldomain}~\eqref{i:selfsimhaspoly}, $\nu$ has polynomial Fourier decay. 
    Also, regardless of whether or not~\eqref{e:nonpolyalongreals} holds, if $f$ does not degenerate on $[0,1] \cap U$ then by Theorem~\ref{thm: maingeneraldomain}~\eqref{i:nonexpanding}, $\nu$ once again has polynomial Fourier decay. This completes the proof. 
\end{proof}

Theorem~\ref{thm: conjselfconf} and Proposition~\ref{p:conjcurve}, show that if a characterisation of which self-similar measures on $\mathbb{C}$ have polynomial Fourier decay were to be obtained (this would be very challenging), then one would in fact have a characterisation for a much larger class of conjugate-to-linear self-conformal measures on $\mathbb{C}$. 

\subsection{Self-conformal measures in higher dimensions}
In $\RR^k$, $k \geq 3$, as mentioned in Section~\ref{ss:selfconfprelim}, conformal maps are M\"obius transformations. In particular one can locally write a non-affine conformal map in the form 
\begin{equation}\label{e:conformalhigherform}
    f(\bx)  =  \mathbf{b} + \frac{\alpha A (\bx - \ba)}{|\bx - \ba|^2}
\end{equation}
for some $\mathbf{a},\mathbf{b} \in \RR^k$, $\alpha \in \RR$, $A \in SO_k(\RR)$. 
\begin{lma}\label{lem:highernonconical}
    Let $k \geq 3$ and let $U \subseteq \RR^k$ be non-empty, open and connected. 
    Then every non-affine conformal map $f \colon U\subset\mathbb{R}^{k}\to\mathbb{R}^k$ is non-conical. 
\end{lma}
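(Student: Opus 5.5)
The plan is to compute $|P_\bv|$ explicitly. For a conformal map the Jacobian is a scalar multiple of an orthogonal matrix, so the norm of $(\nabla f)^T\bv$ does not depend on the direction $\bv$. Then I check that the conformal factor of a non-affine M\"obius map is not locally constant.

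First I reduce to the explicit form~\eqref{e:conformalhigherform}. By Liouville's theorem (recalled in Section~\ref{ss:selfconfprelim}), a conformal map on the connected open set $U$ with $k\geq 3$ is the restriction of a single M\"obius transformation. Such a transformation is either a similarity, which is affine, or has the form~\eqref{e:conformalhigherform} with $\alpha\neq 0$ and $\ba\notin U$. Since $f$ is non-affine, $f$ has the form~\eqref{e:conformalhigherform} on all of $U$ with $\alpha\neq0$. If one only wants to use the local form, it suffices to work on a single small ball $B$. Non-affineness on $B$ follows from analyticity and connectedness of $U$: an analytic map that is affine on a ball is affine on the connected set $U$. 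Hence the local representation on $B$ has $\alpha\neq 0$.

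Next I compute the derivative. Write $\by=\bx-\ba$. Then
\[ Df(\bx)=\frac{\alpha}{|\by|^2}\,A\Bigl(I-\frac{2\by\by^T}{|\by|^2}\Bigr). \]
The matrix $I-2\by\by^T/|\by|^2$ is a reflection, hence orthogonal. So $Df(\bx)=\lambda(\bx)O(\bx)$ with $O(\bx)\in O_k(\RR)$ and $\lambda(\bx)=|\alpha|/|\bx-\ba|^2$, where the sign of $\alpha$ is absorbed into $O(\bx)$. Consequently, for every $\bv\in\mathbb{S}^{k-1}$,
\[ |P_\bv(\bx)|=|O(\bx)^T\bv|\,\lambda(\bx)=\frac{|\alpha|}{|\bx-\ba|^2}, \]
which is independent of $\bv$.

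It then remains to show that $\bx\mapsto|\bx-\ba|^2$ is not constant on any ball $B\subset U$. Its gradient $2(\bx-\ba)$ vanishes only at $\ba$, so it is nonzero on any open ball, and the function cannot be constant there. Since $\alpha\neq0$, $|P_\bv|$ is non-constant on every ball for every $\bv$, so $f$ is non-conical.

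The only delicate point is the first step: justifying rigorously that a non-affine conformal map on a connected domain has the form~\eqref{e:conformalhigherform} with $\alpha\ne0$ near every point. Locally this is exactly what the paper states after Liouville's theorem, so I would rely on that statement. Alternatively, one can avoid even this by noting that conformality alone gives $|P_\bv(\bx)|=\lambda(\bx)$ for every $\bv$. If $\lambda$ were constant on a ball, $f$ would there be a conformal map with constant scale factor, i.e.\ an isometry up to scaling, hence a similarity by Liouville. That would make $f$ affine on the ball and thus on $U$, a contradiction.
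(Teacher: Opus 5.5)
Your proof is correct and follows essentially the same route as the paper: reduce to the explicit M\"obius form~\eqref{e:conformalhigherform} with $\ba\notin U$, then differentiate to see that $|P_\bv|$ cannot be constant. Your observation that $Df(\bx)$ is $|\alpha|/|\bx-\ba|^2$ times an orthogonal matrix, so that $|P_\bv(\bx)|=|\alpha|/|\bx-\ba|^2$ for every $\bv$, makes explicit the ``straightforward calculation'' the paper leaves to the reader, and your remark on using a single M\"obius representation on the connected set $U$ (or working on a small ball via analyticity) tidies up a point the paper passes over silently.
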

\begin{proof}
    Using the form~\eqref{e:conformalhigherform}, we see that $\ba\notin U$ (otherwise $f$ would be singular at $\ba$). 
    Without loss of generality, we take $\ba,\bb = \bzero$, so 
    \[
    f(\bx)=\frac{\alpha A \bx }{|\bx|^2}.
    \]
    Then 
    \[
    f_\bv(\bx) = \alpha |\bx|^{-2} \langle A\bx,\bv \rangle. 
    \]
    Differentiating, one can verify by a straightforward calculation that $|P_{\bv}|$ is not constant, hence the result. 
\end{proof}
    
We say that a self-conformal measure $\nu$ for an IFS $\{\varphi_i\}$ on $\RR^k$ is conjugate to linear if there is a self-similar IFS $\{\psi_i\}$ and a conformal diffeomorphism $f$ from an open neighbourhood $U$ of the support of the self-similar set to an open neighbourhood of $\nu$ such that $\varphi_i = f \circ \psi_i \circ f^{-1}$ for all $i$. 
Using Theorem~\ref{thm: maingeneraldomain}~\eqref{i:expanding} we can prove the following. 
\begin{thm}\label{thm: higherconformal}
    Fix an integer $k \geq 3$ and let $\nu$ be a self-conformal measure on $\RR^k$. Assume that 
    \begin{enumerate}
        \item $\nu$ is conjugate to linear,
        \item $\nu(V) = 0$ for all affine hyperplanes $V \subset \RR^k$, and 
        \item at least one of the conformal maps in the IFS is non-affine. 
    \end{enumerate}
    Then $\nu$ has polynomial Fourier decay. 
\end{thm}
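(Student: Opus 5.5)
The plan is to mirror the proof of Theorem~\ref{thm: conjselfconf}, with Theorem~\ref{thm: maingeneraldomain}~\eqref{i:expanding} replacing Theorem~\ref{thm:analytickaufman} and Lemma~\ref{lem:highernonconical} replacing Lemma~\ref{lma:cauchyriemann}. This route needs no non-expansion hypothesis, since in $\RR^k$ with $k\geq 3$ the rotation parts of the self-similar IFS may generate a non-amenable group.

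\textbf{Step 1 (pushforward representation).} Since $\nu$ is conjugate to linear, there is a self-similar IFS $\{\psi_i\}$ and a conformal diffeomorphism $f$ with $\varphi_i = f\circ\psi_i\circ f^{-1}$. By the computation~\eqref{e:conjispush} (the proof of Lemma~\ref{l:gatherpushforward}~\eqref{i:push}, which works verbatim in $\RR^k$), $\nu=\mu_f$, where $\mu$ is the self-similar measure for $\{\psi_i\}$ with the same weights. Conformal maps in dimension $k\geq 3$ are real analytic, so $f$ is analytic on an open neighbourhood $U$ of $\supp(\mu)$. We may shrink $U$ to a finite union of small balls around $\supp(\mu)$. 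As in the preliminary lemma of Section~\ref{s:proofassumingLoandAFD}, we then decompose $\mu$ into finitely many branches, each lying in a single connected component of $U$.

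\textbf{Step 2 (affine irreducibility of $\mu$).} Suppose $\mu(H)>0$ for some affine hyperplane $H$. Lemma~\ref{lma: prod of SS}, applied to the single affine equation defining $H$, shows that $\mu$ is supported in an affine hyperplane $L$. Then $\nu$ is supported in $f(L\cap U)$, which lies in a union of finitely many pieces of round spheres or hyperplanes, because M\"obius maps send hyperplanes to spheres or hyperplanes. Assumption~(2) only excludes affine hyperplanes, so this does not yet give a contradiction, and this is the step I expect to be the main obstacle. My first attempt is as follows. On each component, $f$ restricted to $L$ is conformal, so the partial-linearity condition in Theorem~\ref{thm: main}~\eqref{i:degenerate} fails: the image is a piece of a sphere, not a hyperplane. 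I would therefore aim to run the argument on $L$ using the lift-type results, or via Lemma~\ref{lem:highernonconical} applied within $L$. If that fails, I would instead interpret assumption~(2) as forcing affine irreducibility of $\mu$. This interpretation holds whenever the relevant piece of $f(L)$ is flat, and in the spherical case I would use a spherical analogue of Lemma~\ref{lma: prod of SS}.

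\textbf{Step 3 (non-linearity and non-conicality).} By the argument of Lemma~\ref{l:gatherpushforward}~\eqref{i:nonlinear}, which uses only that the $\psi_i$ are affine, assumption~(3) shows that $f$ is non-affine on every component of $U$ meeting $\supp(\mu)$. Lemma~\ref{lem:highernonconical} then gives that $f$ is non-conical on each such component.

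\textbf{Step 4 (conclusion).} Theorem~\ref{thm: maingeneraldomain}~\eqref{i:expanding} applies on each component and gives polynomial Fourier decay of each piece of $\mu_f$. Summing over the finitely many branches yields polynomial Fourier decay of $\nu$.
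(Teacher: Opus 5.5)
There is a genuine gap, and it sits at the point you flag in Step~2. Your fallback, reading assumption~(2) as forcing affine irreducibility of $\mu$, is false. Take $\mu$ affinely irreducible inside the hyperplane $L=\{x_k=1\}$ and $f(\bx)=\bx/|\bx|^2$. Then $\nu=\mu_f$ lives on a round sphere and charges no affine hyperplane, yet $\mu$ is affinely reducible. In this case Step~4 cannot invoke Theorem~\ref{thm: maingeneraldomain}~\eqref{i:expanding} for $\mu$ on $\RR^k$, and nothing in your proposal replaces it. The lift results (Theorems~\ref{thm: lifttograph NE}--\ref{thm: lifttograph - expanding}) concern $\bx\mapsto(\bx,f(\bx))$ and do not apply. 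The observation that $f(L)$ is a piece of a sphere only shows that $f|_L$ is non-trapped. What is actually needed is non-conicality of $f|_L$, viewed as a map from an open subset of $\RR^{k-1}$ to $\RR^k$.

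The missing argument, which is what the paper does, has three parts.

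\textbf{Pin down the affine hull of $\supp(\mu)$ using (2).} On each component $U_j$ of the domain meeting $\supp(\mu)$, $f$ is a M\"obius map with pole $\ba_j$, as in~\eqref{e:conformalhigherform}.
\begin{itemize}
\item It sends a hyperplane through $\ba_j$ into a hyperplane.
\item It sends a flat $F$ of codimension $\geq 2$ into the image, under a similarity, of the flat spanned by $F$ and $\ba_j$. That image lies in an affine hyperplane.
\end{itemize}
Since $\mu(U_j)>0$, either situation would contradict~(2). So the affine hull is $\RR^k$ or a hyperplane $L$ avoiding every $\ba_j$.

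\textbf{Reduce to $\RR^{k-1}$.} In the hyperplane case each $\psi_i$ preserves $L$, because $\psi_i(L)$ is the affine hull of $\psi_i(\supp(\mu))\subseteq L$. Hence $\mu=\iota_*\mu'$ for an isometry $\iota\colon\RR^{k-1}\to L$ and an affinely irreducible self-similar measure $\mu'$ on $\RR^{k-1}$. Then $\nu=\mu'_g$ with $g=f\circ\iota$.

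\textbf{Check that $g$ is non-conical.} The paper attributes this to Lemma~\ref{lem:highernonconical}, but that lemma concerns maps $\RR^k\to\RR^k$, so you should verify it directly. Normalise $\ba_j=\bzero$ and $L=\{x_k=h\}$ with $h\neq0$. Then $g_\bv(\by)=\langle\bb,\bv\rangle+\alpha(\langle\by,\bw'\rangle+hw_k)/(|\by|^2+h^2)$, where $\bw=A^T\bv\neq\bzero$. So $|\nabla g_\bv|^2$ is a rational function on all of $\RR^{k-1}$ tending to $0$ at infinity. If it were constant on a ball it would vanish identically, making $g_\bv$ constant. That would force $\bw=\bzero$, which is false.

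With these in hand, Theorem~\ref{thm: maingeneraldomain}~\eqref{i:expanding}, applied with $(k-1,k)$ in place of $(k,d)$, finishes the proof. Your Steps~1, 3 and~4 otherwise coincide with the paper's argument in the affinely irreducible case.
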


\begin{proof}
    As in Lemma~\ref{l:gatherpushforward}, we can write $\nu = \mu_f$ for a self-similar measure $\mu$ on $\RR^k$ and conformal map $f$. 
    We may assume that each connected component of the domain of $f$ intersects $\supp(\mu)$. 
    By the nonlinearity assumption on $\nu$ and Liouville's theorem, on each such component we can write $f$ as a M\"obius transformation. 
    Since at least one of the maps in the IFS is non-affine, $f$ is non-affine on each component (as in Lemma~\ref{l:gatherpushforward}~\ref{i:nonlinear}), so has the form~\eqref{e:conformalhigherform}. 
    Since $\nu$ gives zero measure to affine hyperplanes, the smallest affine subspace of $\RR^k$ containing $\supp(\mu)$ is either $\RR^k$ itself or an affine hyperplane which is disjoint from each of the points $\ba$ corresponding to the different connected components. 
    In the latter case, $\nu$ is supported in a finite union of ($k-1$)-spheres, and we can write $\nu = \mu'_g$ for an analytic map $g$ and affinely irreducible self-similar measure $\mu'$ on $\RR^{k-1}$. 
    In either case, Lemma~\ref{lem:highernonconical} gives that $\bx \mapsto |P_\bv(\bx)|$ is never constant on any component, so Theorem~\ref{thm: maingeneraldomain}~\eqref{i:expanding} (together with Fourier decay outside sparse frequencies from Theorem~\ref{thm: AFD}) gives polynomial Fourier decay for $\nu$. 
\end{proof}

\begin{rem}
Notice that, unlike the $k=2$ case, for $k\geq 3$, we do not need the ``curve-irreducibility'' condition, which is stronger than the affinely irreducibility condition. This is due to the rigidity of conformal maps in $\RR^k$, $k\geq 3$, c.f. Lemma~\ref{lem:highernonconical}. 
\end{rem}

\section{Open problems}\label{sec: open}

Theorem~\ref{thm: main} being already complete as it is, there is still some room for further work, which we keep for potential future endeavours. \medskip

    \textbf{Beyond the non-expanding condition:} Theorem~\ref{thm: maingeneraldomain} with assumption~\eqref{i:expanding} allows the self-similar measure to have arbitrary rotation group. The cost is that we have to assume $f$ is non-conical. As mentioned in Remark~\ref{rem:sharp}~\eqref{i:dreamconj}, we suspect that the weaker assumption of non-degeneracy should suffice, i.e. we still think that \cite[Conjecture~5.14]{BY-quantitative} should hold. 
    If this is indeed the case, then there would in particular be polynomial Fourier decay for pushforwards of (possibly expanding) self-similar measures supported inside $\RR^k \setminus \{\mathbf{0}\}$ for $k \geq 3$ under the map $\bx \mapsto |\bx|$ whose graph in $\RR^{k+1}$ is a cone. \medskip

    \textbf{Relaxing the analyticity $f$}: We required that $f$ is real analytic, especially for the uniform measure {\L}ojasiewicz inequality. 
    Our results are likely to hold for smooth maps whose Taylor coefficients satisfy some conditions along the lines of the `non-flatness' assumptions in \cite[Section~1.4]{ACWW25}. 
    If one only assumes non-degeneracy / non-conicality as defined in Section~\ref{s:nondeg} but relaxes the regularity from real-analytic to $C^{\infty}$, then there is no hope of a general polynomial Fourier decay result even in the $k=d=1$ case, as was recently shown in~\cite{BBslow}. \medskip

    \textbf{Quantifying the decay:}  In~\cite{BY-quantitative} we obtained nontrivial Fourier decay lower bounds for images of large self-similar measures under maps $f\colon \mathbb{R}^k\to\mathbb{R}$ with rigid curvature conditions. It is likely that one could obtain certain lower bounds for general non-degenerate maps in terms of the `degree of non-degeneracy' over the support of the to-be-pushed self-similar measure, in place of the non-zero Gaussian curvature condition from~\cite{BY-quantitative}. 
    Note that in the van der Corput type result \cite[Theorem~1.1]{ACWW25} for nonlinear images $\RR \to \RR$ of self-similar measures, the lower bound for the exponent of decay that is proved does depend on the `degree of non-degeneracy' of the map. To prove such a result in higher dimensions, one cannot rely on the general non-quantitative version of the Fourier decay outside sparse frequencies property (Definition~\ref{d:afd}). \medskip

\textbf{Self-conformal measures inside curves or in $\RR^k$:} 
 In Proposition~\ref{p:conjcurve} we addressed curve-reducible self-conformal measures which are conjugate to linear, but this leaves open the question of those which are not conjugate to linear. This may require quantitative versions of the arguments from~\cite{BS23,AHW23}. 
\begin{conj}
    Every self-conformal measure on $\CC$ which is not conjugate to linear and which is supported in an analytic curve that is not a line has polynomial Fourier decay. 
\end{conj}

Regarding self-conformal measures in $\RR^k$ for $k \geq 3$, the following seems plausible. 

\begin{conj}\label{conj: higherconformal}
    Theorem~\ref{thm: higherconformal} remains true even without the assumption that $\nu$ is conjugate to linear. 
\end{conj}
Recall that~\cite{AHW25} is about planar measures. 
Proving Conjecture~\ref{conj: higherconformal} would perhaps require spectral gap arguments (noting~\cite{AHW25,BakerKhalilSahlsten}). 
One could also consider Fourier decay for Gibbs measures for $C^1$ potentials, associated with conjugate-to-linear conformal IFSs in $\mathbb{R}^k$; in the non-conjugate case Gibbs measures were considered in~\cite{JordanSahlsten,BakerKhalilSahlsten}. \medskip

\textbf{Nonlinear fibres and conditional measures:} In \cite[Section~4]{BY-quantitative}, we discussed several results related to nonlinear projections of self-similar measures that can be treated with our Fourier decay results. However, we note here that our arguments do not give much information about fibres or conditional measures for those projections. More precisely, let $\mu$ be a probability measure on $\mathbb{R}^k$ and let $f\colon \mathbb{R}^k\to\mathbb{R}^d$ be a smooth (or analytic) map. We considered several properties of the measure $\mu_f$ in the case when $\mu$ is self-similar. Now, let $\mu^\bx$ for $\bx\in\mathbb{R}^d$ be the disintegration of $\mu$ against $\mu_f$. For $\mu_f$-a.e. $\bx$, the conditional measure is a well-defined probability measure. Several questions can be asked; we list two of them. 

\begin{itemize}
    \item What can be said about $\dim (\supp(\mu^\bx))$ for $\mu_f$-generic $\bx$ and for specific $\bx$? Here $\dim$ can be $\Haus, \boxd$, etc. It is not hard to obtain some upper bound for $\dim (\supp(\mu^\bx))$ using results from~\cite{Sh,Wu,CorsoShmerkin}. However, much less is known about the lower bound.
    \item What can be said about the exact dimensionality of $\mu^\bx$ for $\mu_f$-generic $\bx$ and for specific $\bx$? Here, we say that a probability measure $\nu$ on $\mathbb{R}^k$ is exact dimensional if there is some $s\geq 0$ so that for $\nu$-a.e. $\bx\in\mathbb{R}^k$,
    \[
    \frac{\log \nu(B(\bx,\delta))}{\log \delta} \to s \quad \mbox{ as } \delta \to 0.
    \]
    It is known that all self-conformal~\cite{FengHu} and self-affine~\cite{FengExact} measures are exact dimensional. 
    However, much less is known about the exact dimensionality of conditional measures under smooth projections.
\end{itemize}

\section*{Acknowledgements}
\textbf{Thanks.} We thank Amir Algom, Simon Baker, Osama Khalil, Akshat Mudgal, Alex Rutar, and Tuomas Sahlsten for helpful discussions related to this work. 

\textbf{Funding.} AB was financially supported by Simon Baker's EPSRC New Investigators Award (EP/W003880/1) at Loughborough University. AB was also supported by the Marie Sk\l{}odowska-Curie Actions postdoctoral fellowship FoDeNoF (no.~101210409) from the European Union, and by Tuomas Orponen's Research Council of Finland grant (no.~355453), at the University of Jyv\"askyl\"a. 
HY was financially supported by the Leverhulme Trust (ECF-2023-186) as well as the University of Chongqing. 

\textbf{AI use.} ChatGPT was used to help create the TikZ code for Figure~\ref{fig:structure}; AI was not used for anything else in the document.

\textbf{Rights.} For the purpose of open access, the authors have applied a Creative Commons Attribution (CC-BY) licence to any Author Accepted Manuscript version arising from this submission.

\bibliographystyle{amsplain}

\end{document}